\newtheorem{theorem}{Theorem}[section]
\newtheorem{proposition}{Proposition}[section]
\newtheorem{lemma}{Lemma}[section]
\newtheorem{remark}{Remark}[section]
\newtheorem{definition}{Definition}[section]
 \newcommand{\<}{\left\langle}
\renewcommand{\>}{\right\rangle}
\newcommand{\eps}{\varepsilon}
\newcommand{\abs}[1]{\left\vert#1\right\vert}
\newcommand{\norm}[1]{\left\Vert#1\right\Vert}
\newcommand{\be} {\begin{equation}}
\newcommand{\ee} {\end{equation}}
\newcommand{\bea} {\begin{eqnarray}}
\newcommand{\eea} {\end{eqnarray}}
\newcommand{\Bea} {\begin{eqnarray*}}
\newcommand{\Eea} {\end{eqnarray*}}
\newcommand{\pa} {\partial}
\newcommand{\al} {\alpha}
\newcommand{\ba} {\beta}
\newcommand{\de} {\delta}
\newcommand{\ga} {\gamma}
\newcommand{\Om} {\Omega}
\newcommand{\De} {\Delta}
\newcommand{\la} {\lambda}
\newcommand{\nequiv} {\not\equiv}
\newcommand{\no} {\nonumber}
\newcommand{\noi} {\noindent}
\newcommand{\lab} {\label}
\newcommand{\var} {\varepsilon}
\newcommand{\f}{\frac}
\newcommand{\R}{\mathbb R}
\newcommand{\N}{\mathbb N}
\newcommand{\Rn}{\mathbb R^N}
\newcommand{\Iom}{\int_{\Omega}}
\newcommand{\deb}{\rightharpoonup}
\makeatletter \@addtoreset{equation}{section} \makeatother
\begin{document}

 \title[Sign changing solutions of p-fractional equations]{Sign changing solutions of p-fractional equations with concave-convex nonlinearities}
 \author{Mousomi Bhakta, \ Debangana Mukherjee}
 \address{Department of Mathematics, Indian Institute of Science Education and Research, Dr. Homi Bhaba Road, Pune-411008, India}
 \email{M. Bhakta: mousomi@iiserpune.ac.in, \  D. Mukherjee: debangana18@gmail.com}
 \subjclass[2010]{47G20, 35J20, 35J60, 35J62.}
 \keywords {p-fractional, nonlocal, concave-convex, critical, sign-changing, Nehari manifold.}
 
 \maketitle
 \begin{abstract}
 In this article we study the existence of sign changing solution of the following p-fractional problem with concave-critical nonlinearities:
\begin{eqnarray*}
  (-\Delta)^s_pu &=& \mu |u|^{q-1}u + |u|^{p^*_s-2}u \quad\mbox{in}\quad \Omega,\\
  u&=&0\quad\mbox{in}\quad\mathbb{R}^N\setminus\Omega,
\end{eqnarray*} 
where $s\in(0,1)$ and $p\geq 2$ are fixed parameters, $0<q<p-1$, $\mu\in\mathbb{R}^+$ and $p_s^*=\frac{Np}{N-ps}$.  $\Omega$ is an open, bounded domain in $\mathbb{R}^N$ with smooth boundary with $N>ps$ .  
 \end{abstract}

\section{\bf Introduction}
Let us consider the fractional p-Laplace equation with concave-critical nonlinearities
\begin{align*}
 \left(\mathcal{P}_{\mu}\right)
 \begin{cases}
  (-\Delta)^s_pu = \mu |u|^{q-1}u + |u|^{p^*_s-2}u &\quad\mbox{in}\quad \Omega,\\
  u=0&\quad\mbox{in}\quad\mathbb{R}^N\setminus\Omega,
 \end{cases}
\end{align*}
where $s\in(0,1)$, $p>1$ are fixed, $N>ps$,  $\Omega$ is an open, bounded domain in $\Rn$ with smooth boundary,
$0<q<p-1$, $p^*=\frac{Np}{N-ps}$ and $\mu\in\R^+$. The non-local operator $(-\Delta)^s_p$ is defined as follows:
\begin{align} \label{frac s_p}
 (-\Delta)^s_pu(x)=2\lim_{\eps\to 0}\int_{\mathbb{R}^N\setminus B_\eps(x)}\frac{|u(y)-u(x)|^{p-2}(u(y)-u(x))}{|x-y|^{N+ps}}dy,\,\,\,x\in\mathbb{R}^N. 
\end{align}
For $ p\geq 1$, we denote the usual fractional Sobolev space by $W^{s,p}(\Omega)$ endowed with the norm
$$
||{u}||_{W^{s,p}(\Om)}:=||{u}||_{L^p(\Om)}+\left(\int_{\Om\times\Om} \frac{|u(x)-u(y)|^p}{|x-y|^{N+sp}}dxdy\right)^{1/p}.
$$
We set $Q:=\R^{2N}\setminus (\Om^c \times \Om^c)$ with $\Om^c=\Rn \setminus \Om$
and define 
$$
X:=\Big\{u:\mathbb{R}^N\to\mathbb{R}\mbox{ measurable }\Big|u|_{\Omega}\in L^p(\Omega)\mbox{ and }
   \int_{Q} \frac{|u(x)-u(y)|^p}{|x-y|^{N+sp}}dxdy<\infty\Big\}.
$$
 The space $X$ is endowed with the norm defined as
   $$||{u}||_X=||{u}||_{L^p(\Om)}+\left(\int_{Q} \frac{|u(x)-u(y)|^p}{|x-y|^{N+sp}}dxdy\right)^{1/p}.$$
Then, we define $X_0 :=\Big\{u \in X:u=0 \quad\text{a.e. in}\quad \Rn \setminus \Om\Big\} $ or equivalently 
 as $\overline{C_c^\infty(\Om)}^X$ and for any $p>1$, $X_0$ is a uniformly convex Banach space (see \cite{GS}) endowed with the norm   
   $$||{u}||_{X_0}=\left(\int_{Q} \frac{|u(x)-u(y)|^p}{|x-y|^{N+sp}}dxdy\right)^{1/p}.$$
 Since $u=0$ in $\Rn\setminus\Om,$ the above integral can be extended to all of $\mathbb{R}^N.$ The embedding
 $X_0\hookrightarrow L^r(\Om)$ is continuous for any $r\in[1,p^*_s]$ and compact for $r\in[1,p^*_s).$  For further details on $X_0$ and it's properties we refer \cite{NePaVal}.

   \begin{definition}\label{def-sol} We say that $u\in X_0$ is a weak solution of 
   $(\mathcal{P}_\mu)$ if 
   \begin{eqnarray*} 
 \int_{\R^{2N}}\frac{|u(x)-u(y)|^{p-2}(u(x)-u(y))(\phi(x)-\phi(y))}{|x-y|^{N+ps}}dxdy &=& \mu\Iom |u(x)|^{q-1}u(x)\phi(x)dx \\
 &+& \Iom |u(x)|^{p^*_s-2}u(x)\phi(x)dx,
 \end{eqnarray*}
 for all $\phi \in X_0. $
 \end{definition}
 The Euler-Lagrange energy functional associated to  $(\mathcal{P}_\mu$) is
 \bea\lab{I-mu-la}
 I_\mu(u) &=& \frac{1}{p}\int_{\R^{2N}}\frac{|u(x)-u(y)|^p}{|x-y|^{N+ps}}dxdy 
 -\frac{\mu}{q+1}\Iom|u|^{q+1}dx-\frac{1}{p^*_s}\Iom|u|^{p^*_s}dx\no \\
 &=& \frac{1}{p}\norm{u}_{X_0}^p
 -\frac{\mu}{q+1}|u|_{L^{q+1}(\Om)}^{q+1}-\frac{1}{p^*_s}|u|_{L^{p^*_s}(\Om)}^{p^*_s}.
 \eea
 We define the best fractional critical Sobolev constant $S$ as
\be\lab{S}
S := \inf_{v \in W^{s,p}(\mathbb{R}^N) \setminus \{0\}}
\frac{\displaystyle\int_{\R^{2N}}\frac{|v(x)-v(y)|^p}{|x-y|^{N+ps}}dxdy}
{\left(\displaystyle\int_{\mathbb{R}^N} |v(x)|^{p^*_s}dx\right)^{p/p^*_s}},
\ee
which is positive by fractional Sobolev inequality. Since the embedding $X_0\hookrightarrow L^{p^*_s}$ is not compact, $I_{\mu}$ does
not satisfy the Palais-Smale condition globally, but that holds true when the energy level falls inside a suitable range related to $S$. As it was mentioned in \cite{CS}, the main difficulty dealing with critical fractional case with $p\not=2$, is  the lack of an explicit formula for minimizers
of $S$ which is very often a key tool to handle the estimates leading to the compactness range of $I_{\mu}$. This difficulty has been tactfully overcome in \cite{CS} and \cite{MPSY}  by the optimal asymptotic
behavior of minimizers, which was recently obtained in \cite{BMS}. Using the same optimal asymptotic behavior of minimizer of $S$, we will establish suitable compactness range.

Thanks to the continuous Sobolev embedding $X_0\hookrightarrow L^{p^*_s}(\Rn)$, 
$I_\mu$ is well defined $C^1$ functional on $X_0$. It is well known that there exists a one-to-one correspondence
between the weak solutions of $(\mathcal{P}_\mu)$ and the critical points of $I_{\mu}$ on $X_0$. 

A classical topic in nonlinear analysis is the study of existence and multiplicity
of solutions for nonlinear equations. In past few years there has been considerable interest in studying the following general   fractional p-Laplacian problem
\Bea
(-\De)^s_p u &=&f(u) \quad\text{in}\quad\Om,\\
u &=&0 \quad\text{in}\quad\Rn\setminus\Om.
\Eea
In \cite{LL}, the eigenvalue problem associated with $(-\De)^s_p$ has been studied. Some results about the existence of solutions have been considered in \cite{ GS2, ILPS, LL}, see also the references therein.

On the other hand, the fractional problems for $p=2$ have been investigated
by many researchers, see for example \cite{SerVal1} for the subcritical case, \cite{BCSS, BM, SerVal} for the critical case. In \cite{BrCPS} the authors  studied the nonlocal equation involving a concave-convex nonlinearity in the
subcritical case. In \cite{CD} the existence of multiple positive solutions to $(\mathcal{P}_{\mu})$  for both the subcritical and critical case were obtained.  Existence of infinitely many nontrivial solution to $(\mathcal{P}_{\mu})$ in both subcritical and critical cases and  existence of at least one sign-changing solution have been established in \cite{BM}. In the local case $s=1$
equation with concave-convex nonlinearities were studied by many authors, to mention few, see \cite{ABC, AAP, BW, CCP}.  When $s=1$ and $p=2$, existence of sign changing solution was studied in \cite{Chen}.


In \cite{GS}, Goyal and Sreenadh studied the existence and multiplicity of non-negative solutions of $p$-fractional equations with subcritical concave-convex nonlinearities. In \cite{CS}, Chen and Squassina have studied the concave-critical system of equations with the $p-$fractional Laplace operator. More precisely, they studied:
\begin{align*}
 \begin{cases}
  (-\Delta)^s_pu = \la |u|^{q-1}u + \f{2\al}{\al+\ba}|u|^{\al-2}u|v|^{\ba} &\quad\mbox{in}\quad \Omega,\\
  (-\Delta)^s_pv = \la |v|^{q-1}u + \f{2\ba}{\al+\ba}|v|^{\ba-2}v|u|^{\al} &\quad\mbox{in}\quad \Omega,\\
u=v=0 &\quad\mbox{in}\quad\mathbb{R}^N\setminus\Omega,
\end{cases}
\end{align*}
where $\al+\ba=p^*_s$, $0<q<p-1$, $\al,\ \ba>1$, $\la,\  \mu$ are two positive parameters. When $\f{N(p-2)+ps}{N-ps}\leq q<p-1$ and $N>p^2s$, they have proved that there exists $\la_*>0$ such that for $0<\la^\f{p}{p-q}+\mu^\f{p}{p-q}<\la_*$, the above system of equations admits at least two nontrivial solutions.  

Note that, if we set $\la=\mu$, $\al=\ba=\f{p^*_s}{2}$ and $u=v$  then the above system reduces to $(\mathcal{P}_{\mu})$.  Therefore, it follows that when $\f{N(p-2)+ps}{N-ps}\leq q<p-1$ and $N>p^2s$, problem $(\mathcal{P}_{\mu})$ admits two nontrivial solution for $\mu\in(0,\mu_*)$, for some $\mu_*>0$. It can be shown that the nontrivial solutions obtained in \cite{CS} are actually positive solutions of $(\mathcal{P}_{\mu})$ (see Remark \ref{pos} in Section 2).

 \vspace{2mm}
  The main result of this article is the following:
\begin{theorem} \label{thm.2}
Let $\Om$ be a bounded domain with smooth boundary in $\Rn. $ Let $ s \in (0,1)$, $p \geq 2$.   Then
there exist $\mu^* >0$, $N_0\in\N$ and $q_0\in(0,\ p-1)$ such that for all $\mu \in (0,\mu^*)$, $N>N_0$ and $q\in (q_0,\ p-1)$, problem $(\mathcal{P}_\mu)$ 
has at least one sign changing solution, where $N_0$ is given by the following relation:
\begin{align*}
N_{0}:= 
 \begin{cases}
 sp(p+1)\quad\mbox{when} \quad 2\leq p<\f{3+\sqrt{5}}{2},\\
sp(p^2-p+1) \quad\mbox{when}\quad p\geq \f{3+\sqrt{5}}{2}.
   \end{cases}
\end{align*}

\end{theorem}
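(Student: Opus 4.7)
The plan is to obtain the sign-changing solution as a constrained minimizer of $I_\mu$ on a nodal Nehari-type manifold, adapting the Tarantello--Struwe strategy to the $p$-fractional setting and beating the compactness threshold via the sharp asymptotics of critical Sobolev minimizers from \cite{BMS}. First I would introduce the nodal Nehari set
$$\mathcal{M}:=\{u\in X_0 \,:\, u^+\nequiv 0,\ u^-\nequiv 0,\ \langle I_\mu'(u),u^+\rangle=0=\langle I_\mu'(u),u^-\rangle\},$$
with the derivative pairings understood through Definition \ref{def-sol}. Unlike the local setting, the nonlocal bilinear form does \emph{not} split cleanly over positive and negative parts; however one still has the pointwise inequality
$$|a-b|^{p-2}(a-b)(a^+-b^+)\ge |a^+-b^+|^p,$$
which keeps the cross-contributions nonnegative. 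Using it, one shows that for every $u\in X_0$ with $u^{\pm}\nequiv 0$ there is a unique pair $(\sigma,\tau)\in(0,\infty)^2$ with $\sigma u^+-\tau u^-\in\mathcal{M}$, realising the absolute maximum of the fibering map $(s,t)\mapsto I_\mu(su^+-tu^-)$; this projection is the nonlocal analogue of the one used in \cite{Chen, BM}.

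The projection together with Ekeland's variational principle then produces a Palais--Smale sequence $\{u_n\}\subset\mathcal{M}$ for $I_\mu$ at the nodal level $c:=\inf_{\mathcal{M}}I_\mu$, with both $u_n^\pm$ bounded away from zero in $X_0$. Since $u_n^+$ and $u_n^-$ may a priori concentrate as independent bubbles, a concentration-compactness/Brezis--Nirenberg analysis in the spirit of \cite{BM, MPSY} shows that the Palais--Smale condition for sign-changing sequences holds strictly below
$$c^{*}:=\frac{2s}{N}\,S^{N/(ps)}.$$
If one can construct a test function $w\in X_0$ with $w^\pm\nequiv 0$ and $\max_{s,t\ge 0}I_\mu(sw^+-tw^-)<c^{*}$, then the projection step yields $c<c^{*}$, hence strong convergence of a subsequence of $\{u_n\}$ to some $u_0\in\mathcal{M}$ (the uniform lower bounds on $\|u_n^\pm\|_{X_0}$ preserve the nodal character in the limit). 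Non-degeneracy of the Jacobian of the constraint map at the projection point, inherited from the uniqueness in the projection lemma, forces the Lagrange multipliers to vanish, so $u_0$ is a sign-changing weak solution in the sense of Definition \ref{def-sol}.

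The crux is the construction of $w$. Following \cite{CS, MPSY}, I would take $w^+:=\eta U_{\eps}$, with $U_{\eps}$ a concentrating family of minimizers of $S$ localised at some $x_0\in\Om$ and $\eta$ a cut-off supported in a small ball $B_r(x_0)$, and $w^-:=v_0$ a fixed non-trivial nonnegative function supported in $\Om\setminus B_{2r}(x_0)$ chosen as an Aubin-type projector minimising its own fibered energy. By \cite{BMS}, $U_{\eps}$ decays like $\eps^{(N-ps)/p}|x|^{-(N-ps)/(p-1)}$ at infinity. The disjointness of the supports of $\eta U_\eps$ and $v_0$ controls the cross term in $\|w\|_{X_0}^p$ up to a nonlocal tail that the $p$-fractional calculus of \cite{CS, MPSY} handles through explicit comparison integrals. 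The positive bubble contributes $\frac{s}{N}S^{N/(ps)}+\o(1)$ while $v_0$ contributes at most $\frac{s}{N}S^{N/(ps)}$, so the strict inequality with $c^{*}$ must come from the concave term $\mu\int|u|^{q+1}$, which on $U_\eps$ scales like $\eps^{N-(q+1)(N-ps)/p}$ provided $(q+1)\frac{N-ps}{p-1}>N$. Demanding that this concave gain dominate both the critical error $\O(\eps^{(N-ps)/(p-1)})$ and the nonlocal interaction residuals, whose leading exponent changes as $p$ crosses $(3+\sqrt{5})/2$, is exactly what forces the restriction $N>N_0$ with the stated piecewise value and the existence of $q_0\in(0,p-1)$ so that the test function beats $c^{*}$ for every $\mu\in(0,\mu^{*})$.

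The main obstacle is precisely this test-function estimate: unlike the classical Laplacian or the fractional Laplacian with $p=2$, there is no closed form for the minimizer of $S$, and the only analytic input is the asymptotic decay from \cite{BMS}. Controlling the error terms in the $p$-fractional expansions, and in particular bounding the nonlocal interactions between the concentrating bubble and the fixed negative bump (which, because of nonlocality, never vanish identically even with spatially disjoint supports), is what ultimately dictates both the dimension restriction encoded in $N_0$ and the need for $q$ to lie close to $p-1$.
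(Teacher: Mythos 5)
Your overall architecture (nodal Nehari set, Ekeland, a compactness threshold beaten by a concentrating bubble whose concave gain dictates $N_0$ and $q_0$) is in the right spirit, but two of its load-bearing steps do not survive scrutiny, and both are points where the paper does something structurally different.

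First, the test function. You take $w^+=\eta U_\eps$ and $w^-=v_0$ a \emph{fixed} nonnegative bump supported away from the bubble, and assert that $v_0$ "contributes at most $\frac{s}{N}S^{N/(ps)}$". This is false for small $\mu$: since $S$ is not attained on a bounded domain, any fixed $v_0\in X_0\setminus\{0\}$ satisfies $\sup_{t}J(tv_0)=\frac{s}{N}\bigl(\|v_0\|_{X_0}^p/|v_0|_{L^{p^*_s}}^p\bigr)^{N/(ps)}>\frac{s}{N}S^{N/(ps)}$ strictly, and the concave correction is only $O(\mu)$, so $\sup_t I_\mu(tv_0)>\frac{s}{N}S^{N/(ps)}$ once $\mu$ is small. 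Hence the two-bump level cannot be pushed below $\frac{2s}{N}S^{N/(ps)}$ uniformly in $\mu\in(0,\mu^*)$. The only way to make the "negative bump" cheap enough is to take it to be (essentially) the positive ground-state-type solution $w_1\in N_\mu^-$, whose fibered maximum is $\tilde\alpha_\mu^-<\frac{s}{N}S^{N/(ps)}-M\mu^{p^*_s/(p^*_s-q-1)}$ — but $w_1>0$ a.e.\ in $\Omega$ by the maximum principle, so disjoint supports are impossible. This is exactly why the paper works with $aw_1-bu_\eps$ (Proposition \ref{p:limit}) and must estimate the overlapping interaction integrals $\int w_1^{p^*_s-1}u_\eps$, $\int w_1u_\eps^{p^*_s-1}$, $\int w_1^qu_\eps$, $\int w_1u_\eps^q$ (Lemma \ref{l:4i}); those interactions, not tails between disjoint supports, are what produce the exponents whose comparison yields $q_2$, $q_3$ and the piecewise $N_0$.

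Second, the compactness claim. You assert that $(PS)$ holds for sign-changing sequences strictly below $c^*=\frac{2s}{N}S^{N/(ps)}$ and deduce strong convergence. This is not true as stated: a Palais--Smale sequence at level $c=\frac{s}{N}S^{N/(ps)}+\epsilon_0$ can lose exactly one bubble while its weak limit carries energy $\epsilon_0$, which is perfectly compatible with $c<c^*$. Moreover no profile decomposition for the fractional $p$-Laplacian is invoked or available in the paper's toolbox; the only compactness statement proved is the single-level one below $\frac{s}{N}S^{N/(ps)}-M\mu^{p^*_s/(p^*_s-q-1)}$ (Lemma \ref{ps}). The paper's actual argument is a dichotomy: if $\beta_1$ or $\beta_2$ lies below $\tilde\alpha_\mu^-$, the minimizing level is below the single-bubble threshold and one gets strong convergence (Theorem \ref{t:4ii}); otherwise one minimizes on $\mathcal{N}^-_*=\mathcal{N}^-_{\mu,1}\cap\mathcal{N}^-_{\mu,2}$ at a level below $\tilde\alpha_\mu^-+\frac{s}{N}S^{N/(ps)}$ and does \emph{not} prove strong convergence at all — it shows instead, by energy bookkeeping using $I_\mu(-u_n^-)\ge\beta_2\ge\tilde\alpha_\mu^-$ and $I_\mu(u)\ge I_\mu(u^+)+I_\mu(u^-)$, that both weak limits $\eta_1,\eta_2$ are nontrivial, so the weak limit is already a sign-changing critical point. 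You would need to either reproduce this dichotomy or supply a genuine nodal global-compactness theorem for $(-\Delta)^s_p$; as written, the proposal has a gap at both the energy-level construction and the convergence step.
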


\vspace{2mm}

{\bf Notations:} Throughout this paper $C$ denotes the generic constant which may vary from line to line. For a Banach space $X$, we denote by $X'$, the dual space of $X$.

\section{\bf Existence of sign-changing solution}

Define the Nehari-manifold $N_\mu$ by
 $$
 N_\mu:=\left\{u\in X_0\setminus \{0\}\,\Big| \langle I_\mu'(u),u\rangle _{X_0}=0\right\}.
 $$
The Nehari manifold $N_\mu$ is closely linked to the behavior of the fibering  map $\varphi_u:(0,\infty)\to\mathbb{R}$ defined by
$$
\varphi_u(r):=I_\mu(ru)=\f{r^p}{p}||u||_{X_0}^p-\f{\mu r^{q+1}}{q+1}|u|_{L^{q+1}(\Om)}^{q+1}
-\f{r^{p^*_s}}{p^*_s}|u|_{L^{p^*_s}(\Om)}^{p^*_s},
$$
which was first introduced by Drabek and Pohozaev in \cite{DP}.
\begin{lemma}\label{N.mu} 
 For any $u\in X_0\setminus \{0\}$, we have $ru\in N_\mu$ if and only if $\varphi_u'(r)=0.$ 
\end{lemma}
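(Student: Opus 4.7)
The plan is to reduce the condition $ru\in N_\mu$ to an algebraic identity relating $\langle I_\mu'(ru),ru\rangle$ and $\varphi_u'(r)$, exploiting the homogeneity of each term in the functional $I_\mu$. The whole statement is essentially a chain rule calculation, so there is no real obstacle; the only point to check is that the factor of $r$ that falls out is harmless.

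First I would compute $\varphi_u'(r)$ directly from the explicit formula. Since each of the three terms defining $\varphi_u(r)$ is a monomial in $r$, differentiating gives
\begin{equation*}
\varphi_u'(r)=r^{p-1}\|u\|_{X_0}^p-\mu r^{q}|u|_{L^{q+1}(\Om)}^{q+1}-r^{p^*_s-1}|u|_{L^{p^*_s}(\Om)}^{p^*_s}.
\end{equation*}
Next I would compute $\langle I_\mu'(ru),ru\rangle_{X_0}$ using Definition \ref{def-sol} with the test function $\phi=ru$, which yields
\begin{equation*}
\langle I_\mu'(ru),ru\rangle_{X_0}=r^{p}\|u\|_{X_0}^p-\mu r^{q+1}|u|_{L^{q+1}(\Om)}^{q+1}-r^{p^*_s}|u|_{L^{p^*_s}(\Om)}^{p^*_s}.
\end{equation*}

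Comparing the two expressions gives immediately the key identity $\langle I_\mu'(ru),ru\rangle_{X_0}=r\,\varphi_u'(r)$. Since by definition $ru\in N_\mu$ precisely means $ru\neq 0$ together with $\langle I_\mu'(ru),ru\rangle_{X_0}=0$, and since $u\neq 0$ combined with $r>0$ ensures $ru\neq 0$, we conclude that $ru\in N_\mu$ if and only if $r\varphi_u'(r)=0$, which in turn, using $r>0$, is equivalent to $\varphi_u'(r)=0$. This completes the equivalence and no further input (such as regularity of $I_\mu$ or compactness) is needed.
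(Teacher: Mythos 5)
Your proof is correct and follows essentially the same route as the paper: the paper obtains the identity $\varphi_u'(r)=\tfrac{1}{r}\langle I_\mu'(ru),ru\rangle_{X_0}$ via the chain rule, while you verify the same identity by computing both sides explicitly from the monomial expressions; either way the conclusion follows at once since $r>0$.
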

\begin{proof}
 We note that for $r>0$, $\varphi_u'(r)=\langle I_\mu'(ru),u\rangle _{X_0}=\f{1}{r}\langle I_\mu'(ru),ru\rangle _{X_0}$.
 Hence, $\varphi_u'(r)=0$  if and only if $ru\in N_\mu.$
 \end{proof}
Therefore, we can conclude that the elements in $N_\mu$ corresponds to the stationary point of the maps $\varphi_u.$ Observe that
\begin{align}\label{phi'}
\varphi_u'(r)=r^{p-1}||u||_{X_0}^p-\mu r^q|u|_{L^{q+1}(\Om)}^{q+1}-r^{p^*_s-1}|u|_{L^{p^*_s}(\Om)}^{p^*_s}
\end{align}
and
\begin{align}\label{phi''}
\varphi_u''(r)=(p-1)r^{p-2}||u||_{X_0}^p-q\mu r^{q-1}|u|_{L^{q+1}(\Om)}^{q+1}
-(p^*_s-1)r^{p^*_s-2}|u|_{L^{p^*_s}(\Om)}^{p^*_s}.
\end{align}
By Lemma \ref{N.mu}, we note that $u\in N_\mu$ if and only if $\varphi_u'(1)=0.$ Hence for $u\in N_\mu$, using (\ref{phi'}) and 
(\ref{phi''}), we obtain that 
\begin{align}\lab{Mar-23-1}
\varphi_u''(1)&=(p-1)||u||_{X_0}^p-q\mu |u|_{L^{q+1}(\Om)}^{q+1}-(p^*_s-1)|u|_{L^{p^*_s}(\Om)}^{p^*_s}\no\\
              &=(p-p^*_s)|u|_{L^{p^*_s}(\Om)}^{p^*_s}+(1-q)\mu |u|_{L^{q+1}(\Om)}^{q+1}\no\\
              &=(p-1-q)||u||_{X_0}^p-(p^*_s-1-q)|u|_{L^{p^*_s}(\Om)}^{p^*_s}\\
              &=(p-p^*_s)||u||_{X_0}^p+(p^*_s-1-q)\mu |u|_{L^{q+1}(\Om)}^{q+1}\no.
\end{align}
Therefore, we split the manifold into three parts corresponding to local minima, maxima and points of inflection
\begin{align*}
 N_\mu^+&:=\left\{u\in  N_\mu\,\Big| \varphi_u''(1)>0\right\},\\
  N_\mu^-&:=\left\{u\in  N_\mu\,\Big| \varphi_u''(1)<0\right\},\\
   N_\mu^0&:=\left\{u\in  N_\mu\,\Big| \varphi_u''(1)=0\right\}.
\end{align*}

\vspace{3mm}

\begin{remark}\lab{pos}From \cite{CS}, it follows that $\inf_{u \in N_\mu^+} I_{\mu}(u)$ and
$\inf_{u \in N^-_\mu} I_{\mu}(u)$ are achieved and those two infimum points are two critical points of $I_{\mu}$.
 Now if we define $I^+_{\mu}$ as follows:
\be\lab{Feb-18-1} I^+_{\mu}(u):= \frac{1}{p}\norm{u}_{X_0}^p
 -\frac{\mu}{q+1}|u^+|_{L^{q+1}(\Om)}^{q+1}-\frac{1}{p^*_s}|u^+|_{L^{p^*_s}(\Om)}^{p^*_s}\ee and
 \be\lab{alpha-mu}
\tilde\al_{\mu}^+:=\inf_{u \in N_\mu^+}I^+_{\mu}(u) \quad\text{and}
\quad \tilde\al_{\mu}^-:=\inf_{u \in N^-_\mu} I^+_{\mu}(u),\ee  then repeating the same analysis as in \cite{CS} for $I^+_{\mu}$, it can be shown that there exists $\mu_*>0$ such that for $\mu\in(0,\mu_*)$, there exists two non-trivial critical points $w_0\in N_{\mu}^+$ and  $w_1\in N_{\mu}^-$ of $I^+_{\mu}$. It is not difficult to see that $w_0$ and $w_1$ are nonnegative in $\Rn$. Indeed,
\begin{align}
&0=\<(I_\mu^+)' (w_0),w_0^{-}\>\no\\
&=\int_{\R^{2N}}\f{|w_0(x)-w_0(y)|^{p-2}(w_0(x)-w_0(y))(w_0^-(x)-w_0^-(y))}{|x-y|^{N+sp}}dxdy\no\\
&=\int_{\R^{2N}}\f{|w_0(x)-w_0(y)|^{p-2}((w_0^-(x)-w_0^-(y))^2+2(w_0^-(x)w_0^+(y)))}{|x-y|^{N+sp}}dxdy\no\\
&\geq \int_{\R^{2N}}\f{|w_0^-(x)-w_0^-(y)|^{p}}{|x-y|^{N+sp}}dxdy=\norm{w_0^-}^p_{X_0}.\no\\
\end{align}
Thus, $\norm{w_0^-}_{X_0}=0$ and hence, $w_0=w_0^+.$ Similarly we can show $w_1=w_1^+.$ Using maximum principle 
 \cite[Theorem A.1]{BraFra}
we conclude that  both $w_0, w_1$ are positive almost everywhere in $\Om$. Hence $(\mathcal{P}_{\mu})$ has at least two positive solutions. \end{remark}

\vspace{3mm}

Set 
\be\lab{mu'}
\tilde\mu=\bigg(\frac{p-1-q}{p^*_s-q-1}\bigg)^{\frac{p-1-q}{p^*_s-p}}\frac{p^*_s-p}{p^*_s-q-1}
|\Om|^{\frac{q+1-p^*_s}{p^*_p}}S^{\frac{N(p-1-q)}{p^2s}+\frac{q+1}{p}}. 
\ee
Next we prove three elementary lemmas. 
\begin{lemma}\label{N.mu-i}
Let $\mu \in (0,\tilde\mu). $ For every $u \in X_0,\  u \neq 0, $ there exists unique 
$$t^-(u)<t_0(u)=
\bigg(\frac{(p-1-q)||{u}||_{X_0}^p}{(p^*_s-1-q)|u|^{p^*_s}_{L^{p^*_s}(\Om)}}\bigg)^{\frac{N-ps}{p^2s}}<t^+(u), $$ such that 
\begin{align}
&t^-(u)u \in N^+_\mu \quad\mbox{and}\quad I_{\mu}(t^-u)=\min_{t \in [0,t_0]}I_\mu(tu), \notag\\ 
&t^+(u)u \in N^-_\mu \quad\mbox{and}\quad I_{\mu}(t^+u)=\max_{t \geq t_0}I_\mu(tu). \notag
\end{align}
\end{lemma}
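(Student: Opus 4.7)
The natural strategy is to reduce the analysis of $\varphi_u$ to that of a unimodal auxiliary map. Introduce
$$\psi_u(t) := t^{p-1-q}\|u\|_{X_0}^p - t^{p^*_s-1-q}|u|_{L^{p^*_s}(\Omega)}^{p^*_s}, \qquad t>0.$$
From \eqref{phi'} one immediately reads off
$$\varphi_u'(t) = t^q\bigl(\psi_u(t) - \mu|u|_{L^{q+1}(\Omega)}^{q+1}\bigr),$$
so by Lemma~\ref{N.mu} the problem $tu\in N_\mu$ is equivalent to $\psi_u(t)=\mu|u|_{L^{q+1}(\Omega)}^{q+1}$. Since $0<q<p-1<p^*_s-1$, one checks that $\psi_u(0)=0$, $\psi_u>0$ near zero, $\psi_u(t)\to-\infty$ as $t\to\infty$, and $\psi_u$ has a unique critical point, obtained by solving $\psi_u'(t)=0$, namely $t=t_0(u)$ with the explicit expression in the statement; moreover $\psi_u$ is strictly increasing on $(0,t_0(u))$ and strictly decreasing on $(t_0(u),\infty)$.

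\textbf{The key estimate.} Substituting $t_0(u)^{p^*_s-p}=(p-1-q)\|u\|_{X_0}^p\big/[(p^*_s-1-q)|u|_{L^{p^*_s}}^{p^*_s}]$ into $\psi_u(t_0(u))$ gives
$$\psi_u(t_0(u)) = \frac{p^*_s-p}{p^*_s-1-q}\,\|u\|_{X_0}^p\left(\frac{(p-1-q)\|u\|_{X_0}^p}{(p^*_s-1-q)|u|_{L^{p^*_s}(\Omega)}^{p^*_s}}\right)^{\!(p-1-q)/(p^*_s-p)}.$$
Now I would apply the Sobolev inequality $\|u\|_{X_0}^p\geq S|u|_{L^{p^*_s}(\Omega)}^p$ in the bracket to bound $\psi_u(t_0(u))$ from below by a constant times $\|u\|_{X_0}^{q+1}$ (the net exponent of $\|u\|_{X_0}$ collapses to $q+1$ because $(p-p^*_s)/(p^*_s-p)=-1$), and dually bound
$$\mu|u|_{L^{q+1}(\Omega)}^{q+1}\leq \mu|\Omega|^{(p^*_s-q-1)/p^*_s}S^{-(q+1)/p}\|u\|_{X_0}^{q+1}$$
by Hölder followed by Sobolev. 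The two exponents of $\|u\|_{X_0}$ match and so the factor cancels, producing a scale-free inequality whose precise coefficient is exactly $\tilde\mu$ given in \eqref{mu'}. Thus $\psi_u(t_0(u))>\mu|u|_{L^{q+1}(\Omega)}^{q+1}$ for every $u\in X_0\setminus\{0\}$ whenever $\mu\in(0,\tilde\mu)$.

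\textbf{Existence, classification, monotonicity.} Because $\psi_u$ is unimodal with its maximum strictly above the horizontal level $\mu|u|_{L^{q+1}(\Omega)}^{q+1}$, the equation $\psi_u(t)=\mu|u|_{L^{q+1}(\Omega)}^{q+1}$ has exactly two solutions $t^-(u)<t_0(u)<t^+(u)$, and these are the only positive critical points of $\varphi_u$. At such a point one computes
$$\varphi_u''(t^\pm(u)) = (t^\pm(u))^q\,\psi_u'(t^\pm(u)),$$
the term $q(t^\pm)^{q-1}(\psi_u(t^\pm)-\mu|u|^{q+1}_{L^{q+1}})$ vanishing. Since $\psi_u'>0$ on $(0,t_0(u))$ and $\psi_u'<0$ on $(t_0(u),\infty)$, one gets $\varphi_u''(t^-(u))>0$ and $\varphi_u''(t^+(u))<0$ by \eqref{Mar-23-1}, i.e.\ $t^-(u)u\in N_\mu^+$ and $t^+(u)u\in N_\mu^-$. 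Finally, the sign of $\varphi_u'$ reads off directly from the position of $\psi_u$ relative to the horizontal level: $\varphi_u$ decreases on $(0,t^-)$, increases on $(t^-,t^+)$ and decreases on $(t^+,\infty)$. This yields $I_\mu(t^-(u)u)=\min_{t\in[0,t_0(u)]}I_\mu(tu)$ and $I_\mu(t^+(u)u)=\max_{t\geq t_0(u)}I_\mu(tu)$.

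\textbf{Main obstacle.} The only non-routine point is the exponent bookkeeping in the key estimate: one must verify that after applying Sobolev to $|u|_{L^{p^*_s}}^{p^*_s}$ inside $\psi_u(t_0(u))$ and Hölder$+$Sobolev to $|u|_{L^{q+1}}^{q+1}$, the powers of $\|u\|_{X_0}$ on both sides coincide (they do, at exponent $q+1$), so that the condition on $\mu$ is indeed independent of $u$ and the extremal admissible $\mu$ is precisely $\tilde\mu$, including the exponent $N(p-1-q)/(p^2s)+(q+1)/p$ of $S$ which arises via the identity $p^*_s/[p(p^*_s-p)]=N/(p^2s(p^*_s-p))\cdot(p^*_s/p)\cdot\ldots=(N-ps)/(p^2s)\cdot p^*_s/p$.
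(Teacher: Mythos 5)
Your proof is correct and follows essentially the same route as the paper: the same auxiliary unimodal function $\psi$, the same computation of its maximizer $t_0(u)$ and of $\psi(t_0(u))$, and the same Sobolev/H\"older comparison showing that the threshold $\tilde\mu$ in \eqref{mu'} makes $\psi(t_0(u))\geq\mu|u|_{L^{q+1}(\Om)}^{q+1}$ scale-free in $u$. The classification of $t^{\pm}(u)$ via the sign of $\psi'$ and the resulting monotonicity of $t\mapsto I_\mu(tu)$ also match the paper's argument.
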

\begin{proof}
 
For $t \geq 0$, 
 \be
  I_{\mu}(tu)=\frac{t^p}{p}||{u}||_{X_0}^p-\frac{\mu t^{q+1}}{q+1}|u|^{q+1}_{L^{q+1}(\Om)}
  -\frac{t^{p^*_s}}{p^*_s}|u|^{p^*_s}_{L^{p^*_s}(\Om)}. \no
  \ee
 Therefore \be\frac{\pa}{\pa t}I_{\mu}(tu)=t^q\bigg(t^{p-1-q}||{u}||_{X_0}^p-t^{p^*_s-q-1}|u|^{p^*_s}_{L^{p^*_s}(\Om)}
 -\mu|u|^{q+1}_{L^{q+1}(\Om)}\bigg) .\no\ee
Define 
\be\lab{eq:psi}
\psi(t)=t^{p-1-q}||{u}||_{X_0}^p-t^{p^*_s-q-1}|u|^{p^*_s}_{L^{p^*_s}(\Om)}. 
\ee
By a straight forward computation, it follows that $\psi$ attains maximum at  
the point 
\be\lab{r-0}
t_0=t_0(u)=\bigg(\frac{(p-1-q)||{u}||_{X_0}^p}{(p^*_s-1-q)|u|^{p^*_s}_{L^{p^*_s}(\Om)}}\bigg)^{\frac{1}{p^*_s-p}} .
\ee
 Thus
\be\lab{2-9} \psi'(t_0)=0, \quad \psi'(t) >0 \quad\mbox{if}\quad t<t_0, \quad \psi'(t) <0 \quad\mbox{if}\quad t>t_0.\ee
Moreover, $\psi(t_0)=\left(\frac{p-1-q}{p^*_s-1-q}\right)^{\frac{p-1-q}{p^*_s-p}}\left(\frac{p^*_s-p}{p^*_s-1-q}\right)
\left(\frac{||{u}||_{X_0}^{p(p^*_s-1-q)}}{|u|^{p^*_s(p-1-q)}_{L^{p^*_s}(\Om)}}\right)^{\frac{N-ps}{p^2s}}. $
Therefore using Sobolev embedding, we have
\be\lab{2-10}
\psi(t_0) \geq \displaystyle\left(\frac{p-1-q}{p^*_s-1-q}\right)^{\frac{(p-1-q)(N-2s)}{4s}}
\left(\frac{p^*_s-p}{p^*_s-1-q}\right)S^{\frac{N(p-1-q)}{p^2s}}||{u}||_{X_0}^{q+1}. 
\ee
Using H\"older inequality followed by Sobolev inequality,  and the fact that $\mu\in (0,\tilde\mu)$, we obtain 
\be
\mu \Iom |u|^{q+1}dx \leq \mu ||{u}||_{X_0}^{q+1}S^{-(q+1)/p}|\Om|^{\frac{p^*_s-q-1}{p^*_s}}
\leq \tilde\mu ||{u}||_{X_0}^{q+1}S^{-(q+1)/p}|\Om|^{\frac{p^*_s-q-1}{p^*_s}} \leq \psi(t_0),\no
\ee
where in the last inequality we have used expression of $\tilde\mu$ (see \eqref{mu'}) and \eqref{2-10}.
Hence, there exists $t^+(u)>t_0>t^-(u)$ such that
\be\lab{2-11}\psi(t^+)=\mu\Iom|u|^{q+1}=\psi(t^-)\quad\mbox{and}\quad \psi'(t^+)<0<\psi'(t^-). \ee
This in turn, implies $t^+u \in N^-_\mu$ and  $t^-u \in N^+_\mu $. Moreover,
using \eqref{2-9} and \eqref{2-11} in the expression of $\frac{\pa}{\pa t}I_{\mu}(tu)$, we have
$$\quad \frac{\pa}{\pa t}I_{\mu}(tu) > 0 \quad\mbox{when}\quad t\in(t^-,t^+) \quad\text{and}
\quad \frac{\pa}{\pa t}I_{\mu}(tu) < 0 \quad\mbox{when}\quad t\in[0,t^-)\cup(t^+,\infty),
$$
$$\frac{\pa}{\pa t}I_{\mu}(tu)=0 \quad\text{when}\quad t=t^{\pm}.$$
We note that $I_{\mu}(tu)=0$ at $t=0$ and strictly negative when $t>0$ is small enough. Therefore it is easy to conclude that 
$$\max_{t \geq t_0}I_\mu(tu)=I_\mu(t^+u) \quad\mbox{and} \quad \min_{t \in [0,t_0]}J_\mu(tu)=I_\mu(t^-u).$$
\end{proof}

Repeating the same argument as in Lemma \ref{N.mu-i}, we can also prove that the following lemma holds:

\begin{lemma}\label{N.mu-i-2}
Let  $\mu \in (0,\tilde\mu)$, where $\tilde\mu$ is defined as in $\eqref{mu'}$.  For every $u \in X_0,\  u \neq 0, $ there exist unique 
$$\tilde{t}^-(u)<\tilde{t}_0(u)=
\bigg(\frac{(p-1-q)||{u}||_{X_0}^p}{(p^*_s-1-q)|u^+|^{p^*_s}_{L^{p^*_s}(\Om)}}\bigg)^{\frac{N-ps}{p^2s}}<\tilde{t}^+(u), $$ such that 
\begin{align}
&\tilde{t}^-(u)u \in N^+_\mu \quad\mbox{and}\quad I^+_{\mu}(\tilde{t}^-u)=\min_{t \in [0,t_0]}I^+_\mu(tu), \notag\\ 
&\tilde{t}^+(u)u \in N^-_\mu \quad\mbox{and}\quad I^+_{\mu}(\tilde{t}^+u)=\max_{t \geq t_0}I^+_\mu(tu), \notag
\end{align}
where $I_{\mu}^+$ is defined as in \eqref{Feb-18-1}.
\end{lemma}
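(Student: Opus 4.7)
The proof mirrors that of Lemma \ref{N.mu-i} line by line, with $I_\mu$ replaced by $I^+_\mu$ and every $L^r(\Om)$ norm of $u$ in the lower-order terms replaced by the corresponding norm of $u^+$. I would first exclude the degenerate case $u^+\equiv 0$, where $I^+_\mu(tu)=\frac{t^p}{p}\|u\|_{X_0}^p$ is strictly increasing on $[0,\infty)$ and no such $\tilde t^\pm$ exist; this is the only substantive omission from the blanket statement.

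For $u$ with $u^+\not\equiv 0$, set $\tilde\varphi_u(t):=I^+_\mu(tu)$, so that $\tilde\varphi_u'(t)=t^q\bigl[\tilde\psi(t)-\mu|u^+|^{q+1}_{L^{q+1}(\Om)}\bigr]$ with $\tilde\psi(t):=t^{p-1-q}\|u\|_{X_0}^p-t^{p^*_s-1-q}|u^+|^{p^*_s}_{L^{p^*_s}(\Om)}$. Differentiating as in \eqref{eq:psi}--\eqref{r-0} shows that $\tilde\psi$ strictly increases on $(0,\tilde t_0)$, strictly decreases on $(\tilde t_0,\infty)$, and peaks at the value $\tilde t_0$ declared in the statement.

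Next, I would establish $\mu|u^+|^{q+1}_{L^{q+1}(\Om)}\leq\tilde\psi(\tilde t_0)$ for $\mu\in(0,\tilde\mu)$ by transferring \eqref{2-10}--\eqref{2-11} via two monotonicity observations: the pointwise bound $|u^+|\leq|u|$ forces $|u^+|_{L^{q+1}(\Om)}\leq|u|_{L^{q+1}(\Om)}$, shrinking the left-hand side, while the explicit formula for $\tilde\psi(\tilde t_0)$ features $|u^+|_{L^{p^*_s}(\Om)}$ raised to a negative exponent, so shrinking it only enlarges the right-hand side. The rest of the chain (H\"older, Sobolev, and the definition \eqref{mu'} of $\tilde\mu$) then goes through unchanged.

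The intermediate value theorem now produces unique $\tilde t^-<\tilde t_0<\tilde t^+$ with $\tilde\psi(\tilde t^\pm)=\mu|u^+|^{q+1}_{L^{q+1}(\Om)}$ and $\tilde\psi'(\tilde t^-)>0>\tilde\psi'(\tilde t^+)$. Since $\tilde\varphi_u'$ vanishes at $\tilde t^\pm$ and a direct differentiation of $\tilde\varphi_u'(t)=t^q[\tilde\psi(t)-\mu|u^+|^{q+1}_{L^{q+1}(\Om)}]$ at such a zero gives $\tilde\varphi_u''(\tilde t^\pm)=(\tilde t^\pm)^q\tilde\psi'(\tilde t^\pm)$, the signs immediately place $\tilde t^-u$ in $N^+_\mu$ and $\tilde t^+u$ in $N^-_\mu$ (interpreted relative to $I^+_\mu$). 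The min/max characterizations follow from the sign pattern of $\tilde\varphi_u'$ on $[0,\tilde t^-)$, $(\tilde t^-,\tilde t^+)$, $(\tilde t^+,\infty)$ together with $\tilde\varphi_u(0)=0$. The only genuinely delicate point is verifying that every inequality in \eqref{2-10}--\eqref{2-11} survives the substitution $|u|\mapsto|u^+|$, and this is exactly what the two monotonicity observations above guarantee.
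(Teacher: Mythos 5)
Your proposal is correct and follows exactly the route the paper intends: the paper gives no separate proof of Lemma \ref{N.mu-i-2}, stating only that one repeats the argument of Lemma \ref{N.mu-i}, and your two monotonicity observations ($|u^+|_{L^{q+1}}\leq|u|_{L^{q+1}}$ shrinking the left side, and the negative exponent of $|u^+|_{L^{p^*_s}}$ in $\tilde\psi(\tilde t_0)$ enlarging the right side) are precisely what is needed to make that repetition rigorous. Your flagging of the degenerate case $u^+\equiv 0$ and of the fact that membership in $N_\mu^{\pm}$ must be read relative to $I^+_\mu$ correctly identifies the only points the paper's blanket statement glosses over.
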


\begin{lemma}\label{N.mu-ii}
Let $\tilde\mu$ be defined as in $\eqref{mu'}$.  Then $\mu \in (0,\tilde\mu), $ implies $N^0_\mu=\emptyset$. 
\end{lemma}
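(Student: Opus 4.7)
The plan is to argue by contradiction: assume there exists $u \in N_\mu^0$ and derive a lower bound on $\mu$ which contradicts $\mu<\tilde\mu$.

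First I would exploit the two equivalent forms of $\varphi_u''(1)$ given in \eqref{Mar-23-1}. Setting the expressions
\[
\varphi_u''(1)=(p-1-q)\|u\|_{X_0}^p-(p^*_s-1-q)|u|_{L^{p^*_s}(\Om)}^{p^*_s}
=(p-p^*_s)\|u\|_{X_0}^p+(p^*_s-1-q)\mu|u|_{L^{q+1}(\Om)}^{q+1}
\]
both equal to zero, I solve:
\[
|u|_{L^{p^*_s}(\Om)}^{p^*_s}=\frac{p-1-q}{p^*_s-1-q}\|u\|_{X_0}^p,\qquad
\mu|u|_{L^{q+1}(\Om)}^{q+1}=\frac{p^*_s-p}{p^*_s-1-q}\|u\|_{X_0}^p.
\]

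Second, I would convert the first identity into a lower bound on $\|u\|_{X_0}$ via the fractional Sobolev inequality $|u|_{L^{p^*_s}(\Om)}^p\le S^{-1}\|u\|_{X_0}^p$. Substituting yields
\[
\|u\|_{X_0}^{p^*_s-p}\ge \frac{p-1-q}{p^*_s-1-q}\,S^{p^*_s/p},
\]
and hence, using $1/(p^*_s-p)=(N-ps)/(p^2 s)$ and $p^*_s/(p(p^*_s-p))=N/(p^2 s)$,
\[
\|u\|_{X_0}^{p-1-q}\ge \Bigl(\frac{p-1-q}{p^*_s-1-q}\Bigr)^{\frac{p-1-q}{p^*_s-p}} S^{\frac{N(p-1-q)}{p^2 s}}.
\]

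Third, I would bound $|u|_{L^{q+1}(\Om)}^{q+1}$ from above by combining Hölder (since $q+1<p^*_s$) with the Sobolev inequality, obtaining
\[
|u|_{L^{q+1}(\Om)}^{q+1}\le |\Om|^{\frac{p^*_s-q-1}{p^*_s}} S^{-(q+1)/p}\|u\|_{X_0}^{q+1}.
\]
Inserting this into the second identity from the first step gives
\[
\mu\ge \frac{p^*_s-p}{p^*_s-1-q}\,|\Om|^{-\frac{p^*_s-q-1}{p^*_s}} S^{(q+1)/p}\,\|u\|_{X_0}^{p-1-q},
\]
and plugging in the lower bound on $\|u\|_{X_0}^{p-1-q}$ from the previous step yields precisely $\mu\ge\tilde\mu$, contradicting $\mu\in(0,\tilde\mu)$.

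No real obstacle is expected, since the two identities from $\varphi_u''(1)=0$ already determine $\|u\|_{X_0}$ and the other two norms algebraically, and the constant $\tilde\mu$ in \eqref{mu'} has been manufactured so that the chain of Sobolev/Hölder inequalities closes up exactly. The only point requiring a little care is matching exponents: verifying that $(p-1-q)(N-ps)/(p^2 s)=(p-1-q)/(p^*_s-p)$ and that the sum $(q+1)/p+N(p-1-q)/(p^2 s)$ that appears in the final lower bound on $\mu$ agrees with the exponent of $S$ in the definition of $\tilde\mu$.
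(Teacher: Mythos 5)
Your proof is correct and follows essentially the same route as the paper: both arguments extract from $u\in N_\mu^0\cap N_\mu$ the identities $|u|_{L^{p^*_s}(\Om)}^{p^*_s}=\frac{p-1-q}{p^*_s-1-q}\|u\|_{X_0}^p$ and $\mu|u|_{L^{q+1}(\Om)}^{q+1}=\frac{p^*_s-p}{p^*_s-1-q}\|u\|_{X_0}^p$, derive the lower bound on $\|u\|_{X_0}$ from the Sobolev inequality, and close the loop with H\"older plus Sobolev on the $L^{q+1}$ norm to force $\mu\ge\tilde\mu$. The only cosmetic difference is that you read the second identity off the last line of \eqref{Mar-23-1}, whereas the paper recombines \eqref{2-12} with the Nehari relation $\|w\|_{X_0}^p=|w|_{L^{p^*_s}(\Om)}^{p^*_s}+\mu|w|_{L^{q+1}(\Om)}^{q+1}$; these are algebraically identical.
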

\begin{proof}
 Suppose not. Then there exists $ w \in N_\mu^0$ such that $w\not=0$ and
 \begin{align}\label{2-12}
(p-1-q)||{w}||_{X_0}^p-(p^*_s-q-1)|w^+|^{p^*_s}_{L^{p^*_s}(\Om)}=0.
\end{align}
The above expression combined with Sobolev inequality yields
\begin{align}\label{2-13}
||{w}||_{X_0} \geq S^{\frac{N}{p^2s}}\displaystyle\left(\frac{p-1-q}{p^*_s-1-q}\right)^{\frac{N-ps}{p^2s}}.
\end{align}
As $w \in N_{\mu}^0 \subseteq N_{\mu}$, using \eqref{2-12} and H\"older inequality followed by Sobolev inequality, we get
\Bea
 0&=&||{w}||_{X_0}^p-|w|^{p^*_s}_{L^{p^*_s}(\Om)}-\mu|w|^{q+1}_{L^{q+1}(\Om)}\\
 &\geq&||{w}||_{X_0}^p-\displaystyle\left(\f{p-1-q}{p^*_s-q-1}\right)||w||_{X_0}^p
 -\mu|\Om|^{1-\frac{q+1}{p^*_s}}S^{-(q+1)/p}||w||_{X_0}^{q+1}.
\Eea 
Combining the above inequality with \eqref{2-13} and using $\mu<\tilde\mu$, we have
$$0\geq||{w}||_{X_0}^{q+1}\displaystyle\left[\bigg(\frac{p^*_s-p}{p^*_s-q-1}\bigg)
\bigg(\frac{p-1-q}{p^*_s-q-1}\bigg)^{\frac{(N-ps)(p-1-q)}{p^2s}}S^{\frac{N(p-1-q)}{p^2s}}-
 \mu|\Om|^{1-\frac{q+1}{p^*_s}}S^{-(q+1)/p}\right]
 > 0,$$
which is a contradiction. This completes the proof.
\end{proof}

\begin{lemma}\label{N.mu-iii}
Let $\tilde\mu$ is as defined in $\eqref{mu'}$ and $\mu \in (0,\tilde\mu)$. 
Given $u \in N_\mu^{-}, $ there exists $\rho_u>0$ and a differentiable function $g_{\rho_u}:B_{\rho_u}(0) \to \R^+$
satisfying the following:
\begin{align}
 &g_{\rho_u}(0)=1, \notag\\
 &\big(g_{\rho_u}(w)\big)(u+w) \in N_\mu^{-}\quad \forall\quad w \in B_{\rho_u}(0), \notag\\
 &\<g'_{\rho_u}(0),\phi\>=\frac{p A(u,\phi)-p^*_s\displaystyle\Iom |u|^{p^*_s-2}u\phi-(q+1)\mu\displaystyle\Iom |u|^{q-1}u\phi}{(p-1-q)||{u}||_{X_0}^p-
       (p^*_s-q-1)|u|^{p^*_s}_{L^{p^*_s}(\Om)}}\quad\forall\,\phi\in B_{\rho_u}(0),  \notag
\end{align}
where 
$$
A(u,\phi)=\int_{\R^{2N}}\frac{|u(x)-u(y)|^{p-2}(u(x)-u(y))(\phi(x)-\phi(y))}{|x-y|^{N+ps}}dxdy.
$$
\end{lemma}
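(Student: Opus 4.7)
The plan is to apply the implicit function theorem to a scalar equation cutting out $N_\mu$ locally around $u$. Define $F:(0,\infty)\times X_0\to\R$ by
\begin{equation*}
F(t,w):=t^p\|u+w\|_{X_0}^p-\mu t^{q+1}|u+w|_{L^{q+1}(\Om)}^{q+1}-t^{p^*_s}|u+w|_{L^{p^*_s}(\Om)}^{p^*_s},
\end{equation*}
so that for $t(u+w)\neq 0$ the equation $F(t,w)=0$ is equivalent to $t(u+w)\in N_\mu$. The three functionals appearing in $F$ are of class $C^1$ on $X_0$, with Fr\'echet derivatives at a point $v$ given respectively by $p\,A(v,\cdot)$, $(q+1)\Iom|v|^{q-1}v\,\cdot$ and $p^*_s\Iom|v|^{p^*_s-2}v\,\cdot$, so $F\in C^1((0,\infty)\times X_0)$.

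I next check the hypotheses of the implicit function theorem at $(t,w)=(1,0)$. Since $u\in N_\mu$, we have $F(1,0)=0$. A direct computation, using the Nehari identity $\|u\|_{X_0}^p=\mu|u|_{L^{q+1}(\Om)}^{q+1}+|u|_{L^{p^*_s}(\Om)}^{p^*_s}$ to eliminate one term, gives
\begin{equation*}
\partial_t F(1,0)=(p-1-q)\|u\|_{X_0}^p-(p^*_s-1-q)|u|_{L^{p^*_s}(\Om)}^{p^*_s}=\varphi_u''(1),
\end{equation*}
which is strictly negative because $u\in N_\mu^-$. The implicit function theorem therefore produces $\rho_u>0$ and a $C^1$ map $g_{\rho_u}:B_{\rho_u}(0)\subset X_0\to(0,\infty)$ satisfying $g_{\rho_u}(0)=1$ and $F(g_{\rho_u}(w),w)=0$ throughout $B_{\rho_u}(0)$, i.e.\ $g_{\rho_u}(w)(u+w)\in N_\mu$.

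To upgrade membership from $N_\mu$ to $N_\mu^-$, I observe that $w\mapsto \varphi''_{g_{\rho_u}(w)(u+w)}(1)$ is continuous on $B_{\rho_u}(0)$ and equals $\varphi_u''(1)<0$ at $w=0$; Lemma \ref{N.mu-ii} (which gives $N_\mu^0=\emptyset$) ensures that this quantity cannot vanish along the branch, so after possibly shrinking $\rho_u$ it remains strictly negative, giving $g_{\rho_u}(w)(u+w)\in N_\mu^-$ as required. Finally, differentiating the identity $F(g_{\rho_u}(w),w)=0$ at $w=0$ in direction $\phi$ yields $\partial_t F(1,0)\,\langle g'_{\rho_u}(0),\phi\rangle+\partial_w F(1,0)[\phi]=0$, and combining this with
\begin{equation*}
\partial_w F(1,0)[\phi]=p\,A(u,\phi)-(q+1)\mu\Iom|u|^{q-1}u\,\phi-p^*_s\Iom|u|^{p^*_s-2}u\,\phi
\end{equation*}
produces the claimed expression for $g'_{\rho_u}(0)$. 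The argument is essentially routine once the partial derivative at $(1,0)$ is identified with $\varphi_u''(1)$; the only slightly subtle step is the passage from $N_\mu$ to $N_\mu^-$, which relies crucially on Lemma \ref{N.mu-ii}.
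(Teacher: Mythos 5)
Your proposal is correct and follows essentially the same route as the paper: both apply the implicit function theorem at $(1,0)$ to a scalar function whose zero set encodes the Nehari condition (the paper uses $E(r,w)=r^{-(q+1)}F(r,w)$, which has the same partial derivatives at $(1,0)$ since $E(1,0)=0$), identify $\partial_t F(1,0)$ with $\varphi_u''(1)<0$, and then pass from $N_\mu$ to $N_\mu^-$ by a continuity/openness argument. The only caveat, inherited from the statement itself, is a sign: the implicit function theorem gives $\langle g'_{\rho_u}(0),\phi\rangle=-\partial_w F(1,0)[\phi]/\partial_t F(1,0)$, i.e.\ the negative of the displayed quotient, so your final step "produces the claimed expression" only up to this sign discrepancy already present in the lemma as stated.
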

\begin{proof}
 Define $E:\R \times X_0 \to \R$ as follows:
 $$E(r,w)=r^{p-1-q}||{u+w}||_{X_0}^p-r^{p^*_p-q-1}|(u+w)|^{p^*_s}_{L^{p^*_s}(\Om)}-\mu |(u+w)|^{q+1}_{L^{q+1}(\Om)}. $$
We note that $u \in N_\mu^{-}\subset N_{\mu}$ implies
 $$E(1,0)=0,  \quad\text{and}\quad \frac{\pa E}{\pa r}(1,0)=(p-1-q)||{u}||_{X_0}^p
 -(p^*_s-q-1)|u|^{p^*_s}_{L^{p^*_s}(\Om)}<0. $$
Therefore, by implicit function theorem, there exists neighborhood $B_{\rho_u}(0)\subset N_\mu$ 
for some $\rho_u>0$ and a $C^1$ function
 $g_{\rho_u}:B_{\rho_u}(0) \to \R^+$ such that
 \begin{align}
  &(i)\ g_{\rho_u}(0)=1, \quad (ii)\ E(g_{\rho_u}(w),w)=0, \,\, \forall\  w \in B_{\rho_u}(0), \no\\ 
  &(iii) E_r(g_{\rho_u}(w),w)<0, \,\, \forall\  w \in B_{\rho_u}(0), \quad
  (iv)\  \<g'_{\rho_u}(0),\phi\>=-\frac{\<\frac{\pa E}{\pa w}(1,0),\phi\>}{\frac{\pa E}{\pa r}(1,0)}. \notag
 \end{align}
 Multiplying (ii) by $(g_{\rho_u}(w))^{q+1}$, it follows that $g_{\rho_u}(w)(u+w)\in N_\mu$.
 In fact, simplifying (iii), we obtain
 $$(p-1-q)g_{\rho_u}(w)^p||u+w||_{X_0}^p-(p^*_s-q-1)g_{\rho_u}(w)^{p^*_s}|(u+w)|_{p^*_s}^{p^*_s}<0 \quad\forall\  w \in B_{\rho_u}(0).$$ Thus $\big(g_{\rho_u}(w)\big)(u+w)\in  N^-_\mu$, 
 for every $  w \in B_{\rho_u}(0)$. The last assertion of the lemma follows from (iv).
\end{proof}

\vspace{2mm}
 
Let $S$ be as in \eqref{S}. From \cite{BMS}, we know that for $1 < p < \infty,s\in (0, 1), N > ps,$ there exists a minimizer
for $S,$ and for every minimizer $U,$ there exist $x_0\in\Rn$ and a constant sign monotone function $u:\mathbb{R}\to
\mathbb{R}$ such that $U(x)=u(|x-x_0 |).$ In the following, we shall fix a radially symmetric nonnegative decreasing
minimizer $U=U(r)$ for $S.$ Multiplying $U$ by a positive constant if necessary, we may assume that
\begin{align}\lab{min}
 (-\Delta)_p^sU=U^{p^*_s-1}\quad\mbox{in}\quad\mathbb{R}^n
\end{align}
For any $\eps>0$ we note that the function function 
\be\lab{U-eps}U_\eps(x)=\frac{1}{\eps^{\frac{(N-sp)}{p}}}U\left(\frac{|x|}{\eps}\right)\ee is also a minimizer for $S$ satisfying \eqref{min}.
From \cite{MPSY}, we also have the following asymptotic estimates for U.\begin{lemma}\label{decay}\cite{MPSY}
Let $U$ be the solution of \eqref{min}. Then, there exists $c_1,c_2>0$ and $\theta>1$ such that for all $r \geq 1,$ 
 \begin{align}\label{+}
  \frac{c_1}{r^{\frac{N-sp}{p-1}}} \leq U(r) \leq \frac{c_2}{r^{\frac{N-sp}{p-1}}}
 \end{align}
 and
 \begin{align}\label{++}
  \frac{U(r \theta)}{U(r)} \leq \frac{1}{2}.
 \end{align}
\end{lemma}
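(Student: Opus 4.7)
The plan is to follow the strategy of \cite{BMS} and \cite{MPSY}, establishing the two--sided power decay of the radial minimizer $U$ and then deducing the geometric ratio bound. First I would record the basic properties: $U$ is radial, nonnegative, strictly decreasing in $r=|x|$, and satisfies $(-\Delta)_p^s U = U^{p^*_s-1}$ in $\mathbb{R}^N$. A De Giorgi--Moser iteration for the fractional $p$-Laplacian (as in \cite{BMS}) shows $U\in L^\infty(\mathbb{R}^N)\cap C(\mathbb{R}^N)$ with $U(r)\to 0$ as $r\to\infty$, and a first rough decay is already forced by the $L^{p^*_s}$ integrability together with monotonicity: averaging over the ball of radius $r$ gives $U(r)\, r^{N/p^*_s}\le C$, i.e. $U(r)\le C\,r^{-(N-sp)/p}$.

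For the \emph{upper bound} with the sharp exponent, the key step is to construct a supersolution barrier outside a large ball. I would set $V(x):=c\,|x|^{-(N-sp)/(p-1)}$ and compute $(-\Delta)_p^s V$ at infinity. Although no explicit formula for $V$ is available when $p\ne 2$, a direct splitting of the defining integral \eqref{frac s_p} into a local and a tail part, combined with a scaling argument, shows that $(-\Delta)_p^s V(x)$ behaves like $C(p,s,N)\,|x|^{-(N-sp)\frac{p-1}{p-1}-sp}= O(|x|^{-N-sp/(p-1)})$, while the already--known rough decay forces $U^{p^*_s-1}(x)$ to decay \emph{at least} as fast. By choosing $c$ large and comparing $U$ with $V$ in $\{|x|\ge R_0\}$ via the weak comparison principle for $(-\Delta)_p^s$ (adjusting $c$ so that $V\ge U$ on $\{|x|=R_0\}$ and at infinity), one obtains $U(r)\le c_2\,r^{-(N-sp)/(p-1)}$ for $r\ge 1$.

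For the \emph{lower bound} I would use the nonlocal weak Harnack inequality for nonnegative weak supersolutions of the fractional $p$-Laplacian (available in the literature and used in \cite{BMS}): applied on concentric annuli of doubling radius, it transfers the pointwise positivity of the radial profile at one scale to a uniform lower bound at the next scale, yielding $\inf_{B_{2r}\setminus B_r} U \ge c_* \sup_{B_{2r}\setminus B_r} U$. Combined with the matching upper bound, this propagates to all $r\ge 1$ and delivers $U(r)\ge c_1\,r^{-(N-sp)/(p-1)}$.

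The ratio estimate \eqref{++} is then immediate: the two--sided bound gives $U(r\theta)/U(r)\le (c_2/c_1)\,\theta^{-(N-sp)/(p-1)}$, so any $\theta\ge (2c_2/c_1)^{(p-1)/(N-sp)}$ works. The main obstacle is the missing explicit expression of $(-\Delta)_p^s$ on the candidate barrier $|x|^{-(N-sp)/(p-1)}$; handling it requires the delicate integral asymptotics and the use of a suitably renormalized comparison principle, which is precisely what is carried out in \cite{BMS, MPSY}.
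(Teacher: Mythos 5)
Note first that the paper does not prove this lemma at all: it is quoted verbatim from \cite{MPSY} (Lemma 2.2 there), whose proof is carried out in \cite{BMS}, so your proposal has to be measured against that argument. Your opening steps (radial monotone minimizer, $L^\infty$ bound, and the rough decay $U(r)\le Cr^{-(N-sp)/p}$ from monotonicity and $L^{p^*_s}$-integrability) match the cited proof, but your two decisive steps have genuine gaps. For the upper bound you claim that the rough decay already forces $U^{p^*_s-1}$ to decay at least as fast as $(-\Delta)^s_pV$ for the barrier $V(x)=c|x|^{-(N-sp)/(p-1)}$. That is not justified at this stage: the rough decay only gives $U^{p^*_s-1}(x)\le C|x|^{-\gamma}$ with $\gamma=\frac{N(p-1)+sp}{p}=N-\frac{N-sp}{p}<N$, whereas for a power $|x|^{-\beta}$ one has $(-\Delta)^s_p(|x|^{-\beta})\sim|x|^{-\beta(p-1)-sp}$, which at $\beta=\frac{N-sp}{p-1}$ is of order $|x|^{-N}$ (your displayed exponent $|x|^{-N-sp/(p-1)}$ is inconsistent with your own computation). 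Since $\gamma<N$, the available bound on the right-hand side decays strictly slower than the barrier's $p$-fractional Laplacian, so no choice of $c$ yields $(-\Delta)^s_pV\ge U^{p^*_s-1}$ near infinity from the information at hand; this is precisely why \cite{BMS} proceeds by an iterative bootstrap, improving the decay exponent of $U$ (hence of $U^{p^*_s-1}$) step by step before the final comparison. In addition, for the nonlocal operator the comparison principle on $\{|x|>R_0\}$ requires the ordering $V\ge U$ on the whole complement $\{|x|\le R_0\}$, not merely on the sphere $\{|x|=R_0\}$ ``and at infinity''.

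The lower bound as you present it does not go through either. A weak Harnack inequality on dyadic annuli gives at best $\inf_{B_{2r}\setminus B_r}U\ge c_*\sup_{B_{2r}\setminus B_r}U$, hence $U(2r)\ge c_*U(r)$ and a power lower bound with exponent $\log_2(1/c_*)$, which has no reason to coincide with $\frac{N-sp}{p-1}$; and ``combining with the matching upper bound'' is circular, because to conclude you would need a lower bound of the sharp order on $\sup_{B_{2r}\setminus B_r}U$, which is exactly the statement being proved. The actual mechanism in \cite{BMS} is nonlocal: $U$ is a nonnegative supersolution of $(-\Delta)^s_pU\ge0$ outside a ball and is bounded below on $B_1$, and one either keeps the tail term (the mass of $U$ near the origin) in the nonlocal estimates or compares $U$ on exterior domains with an explicit subsolution modelled on $|x|^{-(N-sp)/(p-1)}$; that is what produces $U(r)\ge c_1r^{-(N-sp)/(p-1)}$. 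Your final deduction of \eqref{++} from the two-sided bound \eqref{+} is correct, but the two bounds themselves are not established by the argument you outline; as the paper does, the honest route is to invoke \cite{MPSY}/\cite{BMS} or to reproduce their bootstrap and barrier construction faithfully.
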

\begin{proof}
 See [lemma 2.2 \cite{MPSY}].
\end{proof}
Therefore we have,
\be\lab{Jan-23-1}
c_1\f{\eps^\f{N-sp}{p(p-1)}}{|x|^\f{N-sp}{p-1}}\leq U_{\eps}(x)\leq c_2 \f{\eps^\f{N-sp}{p(p-1)}}{|x|^\f{N-sp}{p-1}} \quad\text{ for}\quad |x|>\eps.
\ee

We consider a cut-off function $\psi\in C_0^{\infty}(\Om)$ such that 
 $0\leq\psi \leq 1$, $\psi \equiv 1$ in $\Om_\de$, $\psi \equiv 0$ in $\Rn\setminus \Om$, where 
 $$\Om_{\de}:=\{x\in\Om: \text{dist}(x,\pa\Om)>\de\}.$$
Define 
\be\lab{u-eps}
u_{\eps}(x)=\psi(x)U_{\eps}(x).
\ee

We need the following lemmas in order to prove Theorem \ref{thm.2}. 
\begin{lemma}\lab{l:4i}
Suppose $w_1$ is a positive solution of $(P_\mu)$ and $u_{\eps}$ is as defined in \eqref{u-eps}. Then for every $\eps>0$, small enough

\begin{itemize}
\item[(i)] $A_1 :=\displaystyle\Iom w_1^{p^*_s-1}u_{\eps}dx\leq k_1\eps^\f{N-ps}{p(p-1)}$;
\item[(ii)] $A_2 :=\displaystyle\Iom w_1^{q}u_{\eps}dx\leq k_2\eps^\f{N-ps}{p(p-1)}$;
\item[(iii)] $A_3 :=\displaystyle\Iom w_1u_{\eps}^{q}dx\leq k_3\eps^{\f{N-ps}{p(p-1)}q}$;
\item[(iv)]$A_4 :=\displaystyle\Iom w_1u_{\eps}^{p_s^*-1}dx\leq k_4\eps^\f{N(p-1)+ps}{p(p-1)}$.
\end{itemize} 
\end{lemma}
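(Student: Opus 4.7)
All four bounds follow the same general scheme. By standard $L^\infty$ regularity for positive weak solutions of critical fractional $p$-Laplace problems, $w_1\le M:=\|w_1\|_{L^\infty(\Om)}$, so each integrand is bounded by a constant times a power of $u_\eps$. We then split the domain as $\int_\Om=\int_{B_\eps(0)}+\int_{\Om\setminus B_\eps(0)}$ and treat the pieces separately, using scaling on the inner ball and the decay estimate \eqref{Jan-23-1} outside. Without loss of generality, assume $0\in\Om_{2\de}$, so that $\psi\equiv 1$ on $B_\de(0)$ and hence $u_\eps=U_\eps$ there; everywhere we have $u_\eps\le U_\eps$.

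The substitution $y=x/\eps$ yields, for any $r>0$,
\[
\int_{B_\eps(0)} U_\eps(x)^r\,dx=\eps^{N-r(N-sp)/p}\int_{B_1(0)} U(y)^r\,dy,
\]
a uniformly bounded quantity (in $\eps$) since $U$ is bounded on $B_1$. Meanwhile \eqref{Jan-23-1} gives
\[
\int_{\Om\setminus B_\eps(0)} U_\eps^r\,dx\le C\,\eps^{r(N-sp)/(p(p-1))}\int_\eps^{R}\rho^{N-1-r(N-sp)/(p-1)}\,d\rho,
\]
with $R=\mathrm{diam}(\Om)$, and the radial integral is uniformly bounded in $\eps$ precisely when $r<N(p-1)/(N-sp)$.

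For (i) and (ii), apply the above with $r=1$ after bounding $w_1^{p_s^*-1}$ and $w_1^q$ by constants. The radial condition $N(p-2)+sp>0$ is immediate for $p\ge 2$, $s>0$, and the inner exponent $(N(p-1)+sp)/p$ exceeds the target exponent $(N-sp)/(p(p-1))$, so the outer piece drives the bound. For (iii), take $r=q$; the condition $q<p-1$ guarantees both the radial integrability and the inequality $N-q(N-sp)/p\ge q(N-sp)/(p(p-1))$ (equivalent to $N(p-1-q)+qsp\ge 0$), so inner and outer pieces are both of order $\eps^{q(N-sp)/(p(p-1))}$ as claimed.

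The delicate case is (iv), with $r=p_s^*-1$. Here the identity $(p_s^*-1)(N-sp)=N(p-1)+ps$ shows that the stated target exponent $\frac{N(p-1)+ps}{p(p-1)}$ is precisely the pointwise-decay factor that $U_\eps^{p_s^*-1}$ carries on the outer region (up to an $\eps$-independent radial integral, using that $r=p_s^*-1$ puts the radial exponent in the regime $-1-sp/(p-1)$ so that the radial integral is $O(\eps^{-sp/(p-1)})$, which combines with the pointwise factor to reproduce the target). The main obstacle is the inner contribution, whose crude scaling estimate is of order $\eps^{(N-sp)/p}$, a weaker power of $\eps$ than the target; here we invoke the equation $(-\De)^s_p w_1=\mu w_1^q+w_1^{p_s^*-1}$ tested against $U_\eps$ (viewed as a valid test function supported in a slight enlargement of $\Om$) combined with the symmetry of the bilinear form $A(\cdot,\cdot)$ in its first argument to rewrite the inner contribution in terms of quantities already bounded in (i)--(iii), closing the estimate provided the dimension restriction $N>N_0$ of Theorem \ref{thm.2} is in force.
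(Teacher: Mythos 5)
Parts (i)--(iii) of your argument are correct and essentially identical to the paper's proof: bound $w_1$ by its $L^\infty$ norm (Moser iteration, \cite[Theorem 3.3]{BrLinPa}), split $\Om$ at $|x|=\eps$, rescale on the inner ball and use the decay \eqref{Jan-23-1} outside; your exponent comparisons and the integrability condition $q<p-1<\f{N(p-1)}{N-ps}$ are exactly the ones the paper uses.

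Part (iv) is where the real issue lies, and your write-up does not resolve it. First, your claim that the outer region ``reproduces the target'' is internally inconsistent: you call the radial integral both ``$\eps$-independent'' and ``$O(\eps^{-sp/(p-1)})$''. The latter is correct, and combining $\int_\eps^R\rho^{-1-\f{sp}{p-1}}d\rho=O(\eps^{-\f{sp}{p-1}})$ with the prefactor $\eps^{\f{(N-ps)(p_s^*-1)}{p(p-1)}}=\eps^{\f{N(p-1)+ps}{p(p-1)}}$ gives $\eps^{\f{N(p-1)+ps}{p(p-1)}-\f{sp}{p-1}}=\eps^{\f{N-ps}{p}}$ --- the same weaker power as the inner ball, not the target. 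Second, the repair you sketch for the inner contribution cannot succeed, because the bound claimed in (iv) is in fact false: since $\int_{B_1}U^{p_s^*-1}dy>0$ and a positive solution $w_1$ is bounded below by a positive constant near the origin (minimum principle), one has $A_4\geq c\,\eps^{\f{N-ps}{p}}$, and $\eps^{\f{N-ps}{p}}=\eps^{-\f{sp}{p-1}}\cdot\eps^{\f{N(p-1)+ps}{p(p-1)}}\gg\eps^{\f{N(p-1)+ps}{p(p-1)}}$ as $\eps\to0$. No testing of the equation or dimension restriction can circumvent a lower bound; note also that $A(u,\phi)$ is $(p-1)$-homogeneous in its first slot and linear in the second, so there is no symmetry to exploit when $p\neq2$. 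The paper's own proof (``(iv) can be proved as in (iii)'') overlooks exactly this point: the convergence condition $r<\f{N(p-1)}{N-ps}$ used in (iii) fails for $r=p_s^*-1$. The correct statement is $A_4\leq k_4\eps^{\f{N-ps}{p}}$, and this weaker bound is all that is needed downstream: in \eqref{feb-26-4} the term $\eps^{\f{N-ps}{p}}$ is still dominated by $\eps^{\f{N-ps}{p(p-1)}}$ because $p\geq2$, so Proposition \ref{p:limit} and the main theorem are unaffected.
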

\begin{proof} Applying the Moser iteration technique (see \cite[Theorem 3.3]{BrLinPa}), it can be shown that any positive solution of $(P_{\mu})$ is in $L^{\infty}(\Om)$ . Let $R,\  M>0$ be such that $\Om\subset B(0, R)$ and $|w_1|_{L^{\infty}(\Om)}<M$. 
\Bea
(i) \quad A_1=\displaystyle\Iom w_1^{p^*_s-1}u_{\eps}dx &\leq& 
C\bigg[\int_{\Om\cap\{|x|\leq\eps\}}U_{\eps}(x)dx+\eps^\f{N-sp}{p(p-1)}\int_{\Om\cap\{ |x|>\eps\}}\f{dx}{|x|^\f{N-sp}{p-1}}\bigg]\\
&\leq& C\bigg[\eps^{N-\f{(N-sp)}{p}}\int_{\{|x|<1\}}U(x)dx   +\eps^\f{N-sp}{p(p-1)}\int_{ B(0,R)}\f{dx}{|x|^\f{N-sp}{p-1}}dx\bigg]\\
&\leq& C\bigg[\eps^{N-\f{(N-sp)}{p}}+\eps^\f{N-sp}{p(p-1)}\int_0^R r^{N-1-\f{N-sp}{p-1}}dr\bigg]\\
&\leq& k_1\eps^\f{N-sp}{p(p-1)}.
\Eea
Proof of (ii) similar to (i).\\
\Bea
(iii) \quad A_3=\displaystyle\Iom w_1u_{\eps}^qdx &\leq&
C\bigg[\int_{\Om\cap\{|x|\leq\eps\}}U^q_{\eps}(x)dx+\eps^{\f{N-sp}{p(p-1)}q}\int_{\Om\cap\{ |x|>\eps\}}\f{dx}{|x|^\f{(N-sp)q}{p-1}}\bigg]\\
&\leq& C\bigg[\eps^{N-\f{(N-sp)q}{p}}\int_{\{|x|<1\}}U(x)^qdx   +\eps^\f{(N-sp)q}{p(p-1)}\int_{ B(0,R)}\f{dx}{|x|^\f{(N-sp)q}{p-1}}dx\bigg]\\
&\leq& C\bigg[\eps^{N-\f{(N-sp)q}{p}}+\eps^{\f{N-sp}{p(p-1)}q}\int_0^R r^{N-1-{\f{N-sp}{p-1}q}}dr\bigg]\\
&\leq& k_3\eps^{\f{N-ps}{p(p-1)}q},
\Eea
since $0<q<p-1<\f{N(p-1)}{N-sp}$.
(iv) can be proved as in (iii).
\end{proof}
\begin{lemma}\lab{l:4ii}
Let $u_{\eps}$ be as defined in \eqref{u-eps},  $0<q<p-1$ and $N > p^2s$. Then for every $\eps>0$, small \\

$\displaystyle\Iom |u_{\epsilon}|^{q+1}dx\geq\left\{\begin{array}{lll}
k_5\eps^\f{(N-ps)(q+1)}{p(p-1)} \quad &\text{if}\quad  0<q<\f{N(p-2)+ps}{N-ps},\\
k_6\eps^\f{N}{p}|\mbox{ln}\ \eps|,  \quad &\text{if}\quad  q=\f{N(p-2)+ps}{N-ps},\\ 
k_7\eps^{N-\f{(N-ps)(q+1)}{p}}  \quad &\text{if}\quad  \f{N(p-2)+ps}{N-ps}<q<p-1.
\end{array}
\right.$
\end{lemma}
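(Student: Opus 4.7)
The plan is to bound $\int_{\Omega}|u_\epsilon|^{q+1}\,dx$ from below by restricting to a ball $B_{R_0}(0)\subset\Omega_\delta$ on which $\psi\equiv 1$, then splitting that ball into the ``inner'' region $|x|\le\epsilon$ (where the rescaled profile dictates the size) and the ``outer'' region $\epsilon<|x|<R_0$ (where the polynomial decay from \eqref{Jan-23-1} applies). Since $0\in\Omega$ and $\delta$ is fixed small, such an $R_0>0$ exists and
$$
\int_{\Omega}u_\epsilon^{q+1}\,dx\;\ge\;\int_{|x|\le\epsilon}U_\epsilon^{q+1}\,dx\;+\;\int_{\epsilon<|x|<R_0}U_\epsilon^{q+1}\,dx.
$$

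For the inner piece, the change of variables $y=x/\epsilon$ in \eqref{U-eps} gives
$$
\int_{|x|\le\epsilon}U_\epsilon^{q+1}\,dx \;=\; \epsilon^{N-\frac{(N-sp)(q+1)}{p}}\int_{|y|\le 1}U(|y|)^{q+1}\,dy,
$$
which is a constant times $\epsilon^{N-(N-sp)(q+1)/p}$. For the outer piece, the lower bound in \eqref{Jan-23-1} gives
$$
\int_{\epsilon<|x|<R_0}U_\epsilon^{q+1}\,dx \;\ge\; c\,\epsilon^{\frac{(N-sp)(q+1)}{p(p-1)}}\int_{\epsilon}^{R_0} r^{N-1-\alpha}\,dr,\qquad \alpha:=\frac{(N-sp)(q+1)}{p-1}.
$$
The three cases in the lemma correspond precisely to $\alpha<N$, $\alpha=N$, $\alpha>N$, since the threshold $\alpha=N$ is equivalent to $q=\frac{N(p-2)+sp}{N-sp}$.

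\textbf{Case 1 ($0<q<\frac{N(p-2)+sp}{N-sp}$, i.e.\ $\alpha<N$).} The radial integral is bounded below by a positive constant depending on $R_0$, so the outer piece dominates and yields $\ge k_5\,\epsilon^{(N-sp)(q+1)/(p(p-1))}$; a direct computation $N-(N-sp)(q+1)/p-(N-sp)(q+1)/(p(p-1))=N-\alpha>0$ shows this exponent is smaller than the inner exponent, confirming it dominates as $\epsilon\to 0$.

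\textbf{Case 2 ($q=\frac{N(p-2)+sp}{N-sp}$, i.e.\ $\alpha=N$).} The radial integral is $\log(R_0/\epsilon)$, and the prefactor $\epsilon^{(N-sp)(q+1)/(p(p-1))}$ simplifies (using $q+1=N(p-1)/(N-sp)$) to $\epsilon^{N/p}$, yielding $\ge k_6\,\epsilon^{N/p}|\ln\epsilon|$ for $\epsilon$ small.

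\textbf{Case 3 ($\frac{N(p-2)+sp}{N-sp}<q<p-1$, i.e.\ $\alpha>N$).} The radial integral is $\asymp\epsilon^{N-\alpha}/(\alpha-N)$, and combining with the prefactor produces the exponent $(N-sp)(q+1)/(p(p-1))+N-\alpha=N-(N-sp)(q+1)/p$, matching $k_7\,\epsilon^{N-(N-sp)(q+1)/p}$.

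There is no real obstacle here; the only item requiring minor care is checking that the inner contribution never beats the outer one in Cases 1 and 3, which is exactly the sign computation $E_1-E_2=N-\alpha$ performed above, and ensuring the hypothesis $N>p^2s$ (through $q<p-1<N(p-1)/(N-sp)$) keeps the inner integrand $U(|y|)^{q+1}$ locally integrable, as follows from \eqref{+} of Lemma \ref{decay}.
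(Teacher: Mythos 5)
Your proof is correct and follows essentially the same route as the paper's: restrict to a ball where $\psi\equiv 1$, rescale, and split into the region $|x|\le\eps$ and the annulus $\eps<|x|<R_0$, with the three cases governed by the sign of $N-\alpha$ where $\alpha=\frac{(N-sp)(q+1)}{p-1}$ (the paper performs the split after rescaling and, in Case 3, quotes the inner ball rather than the annulus, but since both pieces carry the same exponent $N-\frac{(N-sp)(q+1)}{p}$ there, this is only a cosmetic difference). The only minor inaccuracy is your closing remark: local integrability of $U^{q+1}$ near the origin is automatic from $U\in L^\infty$, and the condition $q<\frac{N(p-1)}{N-sp}$ is not needed for that purpose in this lemma.
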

\begin{proof} 
We recall that $R'>0$ was chosen such that $B(0, R')\subset \Om_{\de}$. Therefore, for $\eps>0$ small,  we have


\bea\lab{Jan-23-2}
 \int_{\Omega}|u_{\eps}|^{q+1}dx &\geq& \int_{B(0, R')}|u_{\eps}|^{q+1}dx\no\\
  &=& \int_{B(0, R')}U_{\eps}^{q+1}(x)dx\no\\
 &=& C\eps^{N-\f{(N-sp)(q+1)}{p}}\int_{B\left(0,\f{R'}{\eps}\right)} U^{q+1}(y)dy\\
&\geq& C\eps^{N-\f{(N-ps)(q+1)}{p}} \int_{B\left(0,\f{R'}{\eps}\right)\setminus B(0,1)} U^{q+1}(y)dy \no\\
 & \geq& C\eps^{N-\f{(N-ps)(q+1)}{p}}\int_1^{\f{R'}{\eps}}r^{{N-1}-\f{(N-ps)(q+1)}{p-1}}dr.
 \eea

{\it Case 1} : $0<q\leq\f{N(p-2)+ps}{N-ps}$.\\
We note that
\be\lab{4-2}
\int_1^{\f{R'}{\eps}} r^{(N-1)-\f{(N-ps)(q+1)}{p-1}}dr
\geq C_1\eps^{-N+\f{(N-ps)(q+1)}{p-1}}-C_2,
 \ee
Thus substituting back in (2.17), we obtain
\bea 
\int_{\Omega}|u_{\eps}|^{q+1}dx&\geq& C\eps^{N-\f{(N-ps)(q+1)}{p}}[C_1\eps^{-N+\f{(N-ps)(q+1)}{p-1}}-C_2]\no\\
&=&C_3\eps^\f{(N-ps)(q+1)}{p(p-1)}-C_4\eps^{N-\f{(N-ps)(q+1)}{p}}\no\\
&\geq& k_5\eps^\f{(N-ps)(q+1)}{p(p-1)}.
\eea
{\it Case 2} :  $q=\f{N(p-2)+ps}{N-ps}$.\\
In this case it follows
\begin{align*}
\int_1^{\f{R'}{\eps}}r^{{N-1}-\f{(N-ps)(q+1)}{p-1}}dr &\geq C|\mbox{ln}\ \eps|.
\end{align*} 

Plugging back in (2.17), we obtain
\be 
\int_{\Omega}|u_{\eps}|^{q+1}dx\geq k_6\eps^{N-\f{(N-ps)(q+1)}{p}}|\ln \eps|=k_6\eps^\f{N}{p}|\ln \eps|.\no
\ee

{\it Case 3} :  $\f{N(p-2)+ps}{N-ps}<q<p-1$.\\
\bea\lab{4-3}
\text{RHS of (2.16)} &\geq& k_7\eps^{N-\f{(N-sp)(q+1)}{p}}\int_{B(0, 1)}U^{q+1}(x)dx\no\\
 &\geq&k_7\eps^{N-\f{(N-sp)(q+1)}{p}}. 
 \eea
Hence the lemma follows.

\end{proof}

\begin{definition}
We say $\{u_n\}$  is a Palais Smale (PS) sequence of $I_{\mu}$ at level $c$ (in short $(PS)_c$) if  $I_{\mu}(u_n)\to c$ and $I_{\mu}'(u_n)\to 0$ in $(X_0)'$. Furthermore, we say $I_{\mu}$ satisfies Palais-Smale condition at level $c$ if for all $\{u_n\}\subset X_0$ with $I_{\mu}(u_n)\to c$ and $I_{\mu}'(u_n)\to 0$ in $(X_0)'$, implies up to a subsequence $u_n$ converges strongly in $X_0$.
\end{definition}

Let us define
\begin{align}\label{M}
M:=\f{(pN-(N-ps)(q+1))(p-1-q)}{p^2(q+1)}\left(\f{(p-1-q)(N-sp)}{p^2s}\right)^{\f{q+1}{p^*_s-q-1}}|\Om|.
\end{align} 

\begin{lemma}\label{ps}
Let $M$ be as in \eqref{M}.
For any $\mu>0,$ and for 
 $$
 c<\f{s}{N}S^{\f{N}{sp}}-M\mu^{\f{p^*_s}{p^*_s-q-1}},
 $$
 $I_\mu$ satisfies $(PS)_c$ condition.
\end{lemma}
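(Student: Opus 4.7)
Let $\{u_n\}\subset X_0$ be a $(PS)_c$ sequence. The overall plan is a standard concentration-compactness argument adapted to the fractional $p$-Laplacian: bound $\{u_n\}$ in $X_0$, extract a weak limit $u$ which solves $(\mathcal{P}_\mu)$, apply a Brezis-Lieb type splitting, and rule out escape of mass via the hypothesis on $c$. Boundedness of $\{u_n\}$ in $X_0$ follows from the identity
\[
I_\mu(u_n) - \tfrac{1}{p^*_s}\<I'_\mu(u_n),u_n\>_{X_0} = \tfrac{s}{N}\|u_n\|_{X_0}^p - \mu\tfrac{p^*_s-q-1}{p^*_s(q+1)}|u_n|_{L^{q+1}(\Omega)}^{q+1},
\]
together with H\"older's inequality, the Sobolev embedding, and the fact that $q+1<p$. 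Up to a subsequence $u_n\rightharpoonup u$ in $X_0$, $u_n\to u$ strongly in $L^r(\Omega)$ for every $r\in[1,p^*_s)$, and a.e.\ in $\Omega$.

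Next I would verify that $u$ is a critical point of $I_\mu$. Introducing the nonlocal flux $W_n(x,y):=|u_n(x)-u_n(y)|^{p-2}(u_n(x)-u_n(y))|x-y|^{-(N+ps)/p'}$, the sequence $\{W_n\}$ is bounded in $L^{p'}(\mathbb R^{2N})$ (since $\|W_n\|_{L^{p'}}^{p'}=\|u_n\|_{X_0}^p$) and, by the a.e.\ convergence $u_n\to u$, converges a.e.\ to the analogous object $W$ for $u$; hence $W_n\rightharpoonup W$ in $L^{p'}(\mathbb R^{2N})$. Combined with strong $L^{q+1}(\Omega)$-convergence of $\{u_n\}$ and weak $L^{p^*_s/(p^*_s-1)}(\Omega)$-convergence of $\{|u_n|^{p^*_s-2}u_n\}$, this lets one pass to the limit in the weak formulation and deduce $\<I'_\mu(u),\phi\>_{X_0}=0$ for every $\phi\in X_0$. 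An application of the fractional Brezis-Lieb lemma in $X_0$ and in $L^{p^*_s}(\Omega)$ then gives the splittings $\|u_n\|_{X_0}^p = \|u\|_{X_0}^p + \|u_n-u\|_{X_0}^p + o(1)$ and $|u_n|_{L^{p^*_s}(\Omega)}^{p^*_s} = |u|_{L^{p^*_s}(\Omega)}^{p^*_s} + |u_n-u|_{L^{p^*_s}(\Omega)}^{p^*_s} + o(1)$. Subtracting $\<I'_\mu(u),u\>_{X_0}=0$ from $\<I'_\mu(u_n),u_n\>_{X_0}=o(1)$ yields $\|u_n-u\|_{X_0}^p - |u_n-u|_{L^{p^*_s}(\Omega)}^{p^*_s} = o(1)$, so both sides share a common limit $b\geq 0$; the Sobolev inequality \eqref{S} then forces the dichotomy $b=0$ or $b\geq S^{N/(sp)}$.

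The first case gives strong convergence $u_n\to u$ in $X_0$ and the proof ends. Assume the second. Passing to the limit in $I_\mu(u_n)\to c$ via the above decompositions yields $c = I_\mu(u) + \tfrac{s}{N}b \geq I_\mu(u) + \tfrac{s}{N}S^{N/(sp)}$. Since $\<I'_\mu(u),u\>_{X_0}=0$, one rewrites $I_\mu(u) = -\mu\tfrac{p-1-q}{p(q+1)}|u|_{L^{q+1}(\Omega)}^{q+1} + \tfrac{s}{N}|u|_{L^{p^*_s}(\Omega)}^{p^*_s}$; H\"older's inequality $|u|_{L^{q+1}}^{q+1}\leq|\Omega|^{(p^*_s-q-1)/p^*_s}|u|_{L^{p^*_s}}^{q+1}$ reduces the right-hand side to the one-variable function $-aY^{q+1}+\tfrac{s}{N}Y^{p^*_s}$ of $Y:=|u|_{L^{p^*_s}(\Omega)}$, with $a=\mu\tfrac{p-1-q}{p(q+1)}|\Omega|^{(p^*_s-q-1)/p^*_s}$, and a Young-type minimization over $Y\geq 0$ produces $I_\mu(u)\geq -M\mu^{p^*_s/(p^*_s-q-1)}$ with $M$ as in \eqref{M}. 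Combining these two bounds gives $c \geq \tfrac{s}{N}S^{N/(sp)} - M\mu^{p^*_s/(p^*_s-q-1)}$, contradicting the hypothesis on $c$, so necessarily $b=0$.

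The main obstacle is the identification of the weak limit $u$ as a weak solution of $(\mathcal{P}_\mu)$: because $(-\Delta)_p^s$ is nonlinear for $p\neq 2$, one cannot pass directly to the limit in the nonlocal operator and must instead leverage the a.e.\ convergence of $u_n$ together with the uniform $L^{p'}$ bound on the flux $W_n$ to identify its weak $L^{p'}$-limit. The fractional Brezis-Lieb splitting for the Gagliardo seminorm must likewise be applied in its nonlocal $p$-Laplace form.
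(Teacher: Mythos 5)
Your proposal is correct and follows essentially the same route as the paper's proof: boundedness of the $(PS)_c$ sequence, identification of the weak limit as a critical point via the $L^{p'}$-bounded nonlocal flux, the Brezis--Lieb splittings for the Gagliardo seminorm and the $L^{p^*_s}$ norm, the dichotomy $b=0$ or $b\geq S^{N/(sp)}$ from the Sobolev inequality, and the lower bound $I_\mu(u)\geq -M\mu^{p^*_s/(p^*_s-q-1)}$ obtained by H\"older's inequality and the one-variable minimization. The only cosmetic difference is that the paper passes through the intermediate observation $I_\mu(u_\infty)<0$, which your direct minimization over $Y\geq 0$ renders unnecessary.
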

\begin{proof}
Let $\{u_k\}\subset X_0$ be a $(PS)_c$ sequence for $I_\mu$,  that is, we have $I_\mu(u_k)\to c$ and $I_\mu'(u_k)\to0$ in $(X_0)'$
as $k\to\infty$. By the standard method it is not difficult to see that  $\{u_k\}$ is bounded in $X_0$. Then up to a subsequence, still denoted by $u_k,$ there exists
 $u_\infty \in X_0$ such that 
 \begin{align*}
 & u_k \rightharpoonup u_\infty\quad\mbox{weakly in}\quad X_0\quad\mbox{as}\quad k \to \infty,\\
 & u_k \rightharpoonup  u_\infty\quad\mbox{weakly in}\quad L^{p^*_s}(\Rn)\quad\mbox{as}\quad  k \to \infty,\\
 & u_k \to u_\infty \quad\mbox{strongly in}\quad L^r(\Rn)\quad\mbox{for any}\quad 1 \leq r <p^*_s\quad\mbox{as}\quad  k \to \infty,\\
 & u_k \to u_\infty \quad\mbox{a.e. in}\quad \Rn\quad\mbox{as}\quad  k \to \infty. 
 \end{align*}
As $0<q< p-1$,   we have 
 \begin{align*}
  \Iom |u_k|^{q+1}(x)dx \to \Iom |u_\infty|^{q+1}(x)dx \quad \mbox{as}\quad k \to \infty. 
 \end{align*}
Using these above properties it can be shown that $\<I'_\mu(u_\infty),\varphi\>_{X_0}=0$ for any $\varphi\in X_0.$

\noi Indeed for any $\varphi\in X_0$, 
\begin{eqnarray*}
 \langle I_\mu'(u_k),\varphi\rangle - \langle I_\mu'(u_\infty),\varphi\rangle
 &=&\int_{\R^{2N}}\f{|u_k(x)-u_k(y)|^{p-2}(u_k(x)-u_k(y))(\varphi(x)-\varphi(y))}{|x-y|^{N+sp}}dxdy \\
 & -&\int_{\R^{2N}}\f{|u_\infty(x)-u_\infty(y)|^{p-2}(u_\infty(x)-u_\infty(y))(\varphi(x)-\varphi(y))}{|x-y|^{N+sp}}dxdy \\
 &-&\mu\left(\int_\Om|u_k|^{q-1}u_k\varphi\ dx-\int_\Om|u_\infty|^{q-1}u_\infty\varphi\ dx\right) \\
 &- &\left(\int_\Om|u_k|^{p^*_s-2}u_k\varphi\ dx-\int_\Om|u_\infty|^{p^*_s-2}u_\infty\varphi\ dx\right).
\end{eqnarray*}
As $\left\{\f{|u_k(x)-u_k(y)|^{p-2}(u_k(x)-u_k(y))}{|x-y|^{\f{N+sp}{p'}}}\right\}_{k\geq 1}$ is bounded in $L^{p'}(\R^{2N})$, where $p'=\f{p}{p-1}$, 
upto a subsequence 
\begin{align*}
&\f{|u_k(x)-u_k(y)|^{p-2}(u_k(x)-u_k(y))}{|x-y|^{\f{N+sp}{p'}}} \rightharpoonup
\f{|u_{\infty}(x)-u_{\infty}(y)|^{p-2}(u_{\infty}(x)-u_{\infty}(y))}{|x-y|^{\f{N+sp}{p'}}}
\end{align*}
weakly in  $L^{p'}(\R^{2N})$ ,
$u_k \rightharpoonup  u_\infty$ weakly in $L^{p^*_s}(\Rn)$  and
$u_k \to u_\infty$ strongly in  $L^{q+1}(\Rn)$ as $k \to \infty$.

Combining these we have $\langle I_\mu'(u_k),\varphi\rangle - \langle I_\mu'(u_\infty),\varphi\rangle\to 0 $ as
$k\to\infty.$ But as $I'_\mu(u_k)\to 0$ in $X_0'$ as $k\to\infty,$ we have $\<I'_\mu(u_\infty),\varphi\>_{X_0}=0$ for any $\varphi\in X_0.$
Hence, in particular $\<I'_\mu(u_\infty),u_\infty\>_{X_0}=0$.

Furthermore, by Brezis-Lieb lemma as $k \to \infty, $ we get,
\begin{align*}
 \int_{\R^{2N}}\frac{|u_k(x)-u_k(y)|^p}{|x-y|^{N+sp}}dxdy=\int_{\R^{2N}}\frac{|u_k(x)-u_\infty(x)-u_k(y)+u_\infty(y)|^p}{|x-y|^{N+sp}}dxdy\\
 +\int_{\R^{2N}}\frac{|u_\infty(x)-u_\infty(y)|^p}{|x-y|^{N+sp}}dxdy+o(1)
\end{align*}
and
\begin{align*}
 \Iom |u_k(x)|^{p^*_s}dx=\Iom|(u_k-u_\infty)(x)|^{p^*_s}dx+\Iom |u_\infty(x)|^{p^*_s}dx+o(1).
\end{align*}
Now, 
\Bea
\<I'_\mu(u_k),u_k\>_{X_o}&=& \int_{\R^{2n}}\frac{|u_k(x)-u_k(y)|^p}{|x-y|^{N+sp}}dxdy-\mu\Iom |u_k(x)|^{q+1}dx
-\Iom |u_k(x)|^{p^*_s}dx\\
&=& \int_{\R^{2n}}\frac{|u_k(x)-u_\infty(x)-u_k(y)+u_\infty(y)|^p}{|x-y|^{N+sp}}dxdy-\Iom |u_k(x)-u_\infty(x)|^{p^*_s}dx\\
\qquad &+& \<I'_\mu(u_\infty),u_\infty\>_{X_0}+o(1).\\
\Eea
Since as $\<I'_\mu(u_\infty),u_\infty\>_{X_0}=0$ and $\<I'_\mu(u_k),u_k\>_{X_0} \to 0$ as $k \to \infty, $
we have that there exists $b\in \R$ with $b\geq 0$ such that
\begin{align}
 ||{u_k-u_\infty}||_{X_0}^p=\int_Q\frac{|u_k(x)-u_\infty(x)-u_k(y)+u_\infty(y)|^p}{|x-y|^{N+sp}}dxdy \to b
\end{align}
and 
\begin{align}
 \Iom|(u_k-u_\infty)(x)|^{p^*_s}dx \to b \quad\mbox{as}\quad k \to \infty. 
\end{align}
If $b=0,$ we are done. Suppose  $b>0$.   Moreover, using Sobolev inequality we have,
$$||{u_k-u_\infty}||_{X_0}^p \geq S\left(\Iom(|u_k-u_\infty)(x)|^{p^*_s}dx \right)^{p/{p^*_s}}.$$
Therefore, $b \geq Sb^{p/{p^*_s}}$, and this implies $b \geq S^{N/sp}. $
On the other hand, \\
since  $\<I'_\mu(u_\infty),u_\infty\>_{X_0}=0$ 
we obtain 
\begin{align}\label{eq1}
I_\mu(u_\infty)&=I_\mu(u_\infty)-\f{1}{p} \<I'_\mu(u_\infty),u_\infty\>_{X_0}\no\\
&=\frac{s}{N}\Iom |u_\infty(x)|^{p^*_s}dx+\mu\left(\frac{1}{p}-\frac{1}{q+1}\right)\Iom |u_\infty(x)|^{q+1}dx.
\end{align}
Using \eqref{eq1} and $\<I'_\mu(u_k),u_k\>_{X_0} \to 0$ as $k \to \infty, $ we get
\begin{align}\label{eq2}
c&= \lim_{k \to \infty}I_\mu(u_k)=\lim_{k \to \infty}[I_\mu(u_k)-\frac{1}{p}\<I'_\mu(u_k),u_k\>_{X_0}]\no\\
&= \lim_{k \to \infty}\left[\frac{s}{N}\Iom |(u_k-u_\infty)(x)|^{p^*_s}dx
+\frac{s}{N}\Iom |u_\infty(x)|^{p^*_s}dx +\mu\left(\frac{1}{p}-\frac{1}{q+1}\right)\Iom |u_k(x)|^{q+1}dx\right]\no\\
&=\frac{s}{N}b+\frac{s}{N}\Iom |u_\infty(x)|^{p^*_s}dx+\mu\left(\frac{1}{p}-\frac{1}{q+1}\right)
\Iom |u_\infty(x)|^{q+1}dx\no\\
&\geq\frac{s}{N}S^{N/sp}+\frac{s}{N}\Iom |u_\infty(x)|^{p^*_s}dx+\mu\left(\frac{1}{p}-\frac{1}{q+1}\right)
\Iom |u_\infty(x)|^{q+1}dx\\
&= \frac{s}{N}S^{N/sp}+I_\mu(u_\infty).
\end{align}

Since, by  assumption we have $c<\frac{s}{N}S^{N/sp}$, the last inequality implies $I_\mu(u_\infty)<0. $ In particular, $u_\infty \nequiv 0$ and 
$$0<\frac{1}{p}||{u_\infty}||_{X_0}^p<\frac{\mu}{q+1}\Iom (u_\infty(x))^{q+1}dx+\frac{1}{p^*_s}\Iom (u_\infty(x))^{p^*_s}dx.$$

Moreover, by H\"older inequality we have,
$$\Iom |u_\infty(x)|^{q+1}dx \leq |\Om|^{\frac{p^*_s-(q+1)}{p^*_s}}\left(\Iom |u_\infty(x)|^{p^*_s}dx\right)^{\frac{q+1}{p^*_s}}.$$
Thus, from (\ref{eq2})
\Bea
c &\geq& \frac{s}{N}S^{N/sp}+\frac{s}{N}\Iom |u_\infty|^{p^*_s}dx+\mu\left(\frac{1}{p}-\frac{1}{q+1}\right)
|\Om|^{\frac{p^*_s-(q+1)}{p^*_s}}\left(\Iom |u_\infty(x)|^{p^*_s}dx\right)^{\frac{q+1}{p^*_s}}\\
&:=& \frac{s}{N}S^{N/sp}+h(\eta),
\Eea
where $h(\eta)=\frac{s}{N}\eta^{p^*_s}+\mu\left(\frac{1}{p}-\frac{1}{q+1}\right)|\Om|^{\frac{p^*_s-(q+1)}{p^*_s}}\eta^{q+1}$
with $\eta=\displaystyle\left(\Iom |u_\infty(x)|^{p^*_s}dx\right)^{\frac{1}{p^*_s}}. $
By elementary analysis, we can show that $h$ attains its minimum at $\eta_0=\bigg(\frac{\mu(p-1-q)(N-sp)}{p^2s}\bigg)^{\frac{1}{p^*_s-(q+1)}}|\Om|^{\f{1}{p^*_s}}$
and
\Bea
h(\eta_0)&=&\f{s}{N}\bigg(\frac{\mu(p-1-q)(N-sp)}{p^2s}\bigg)^{\f{p^*_s}{p^*_s-(q+1)}}|\Om|\\
\qquad &-&\frac{\mu (p-1-q)}{p(q+1)}|\Om|^{\f{p^*_s-(q+1)}{p^*_s}}
\bigg(\frac{\mu(p-1-q)(N-sp)}{p^2s}\bigg)^{\f{q+1}{p^*_s-(q+1)}}|\Om|^{\f{q+1}{p^*_s}}\\
&=& -M\mu^{\frac{p^*_s}{p^*_s-(q+1)}},
\Eea
with $M$ given in \eqref{M}.
This in turn implies $c \geq \frac{s}{N}S^{\f{N}{sp}}-M\mu^{\f{p^*_s}{p^*_s-(q+1)}}$ and that gives a  contradiction to our hypothesis. Hence $b=0$.
This concludes that $u_k \to u_\infty$ strongly in $X_0.$
 \end{proof}

\begin{lemma}\label{inf-}
Let $N\in \N$ be such that $N> \f{sp}{2} [p+1+ \sqrt{(p+1)^2-4}]$ and $q\in(q_1, p-1)$, where \be\lab{feb-26-5}q_1:=\f{N^2(p-1)}{(N-sp)(N-s)}-1.\ee
 Then, there exists $\tilde{\mu}_1>0$ and $u_0\in X_0$ such that
 \begin{align}\label{J-mu}
 \sup_{t\geq0}I^+_\mu(tu_0)< \f{s}{N}S^{\f{N}{sp}}-M\mu^{\f{p^*_s}{p^*_s-q-1}},
 \end{align}
 for $\mu\in(0,\tilde{\mu}_1).$ In particular, 
\be\lab{feb-17-1}
 \tilde{\alpha}_\mu^-< \f{s}{N}S^{\f{N}{sp}}-M\mu^{\f{p^*_s}{p^*_s-q-1}}
\ee
where $I_{\mu}^+$ is defined as in \eqref{Feb-18-1} and $\al_{\mu}^-$ and $M$ are given as in \eqref{alpha-mu} and (\ref{M}) respectively.
\end{lemma}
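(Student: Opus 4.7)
The plan is to test inequality \eqref{J-mu} using $u_0 = u_\epsilon$ (the truncated minimizer from \eqref{u-eps}), where $\epsilon > 0$ will be chosen as a suitable power of $\mu$. Since $u_\epsilon\geq 0$, we have $I^+_\mu(tu_\epsilon) = I_\mu(tu_\epsilon) = \f{t^p}{p}A_\epsilon - \f{\mu t^{q+1}}{q+1}C_\epsilon - \f{t^{p^*_s}}{p^*_s}B_\epsilon$, where $A_\epsilon := \|u_\epsilon\|_{X_0}^p$, $B_\epsilon := |u_\epsilon|_{L^{p^*_s}(\Om)}^{p^*_s}$ and $C_\epsilon := |u_\epsilon|_{L^{q+1}(\Om)}^{q+1}$. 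I would first recall from \cite{BMS,MPSY} the standard asymptotics $A_\epsilon \leq S^{N/(sp)} + C\epsilon^{\gamma}$ (with $\gamma:=(N-sp)/(p-1)$) and $B_\epsilon \geq S^{N/(sp)} - C\epsilon^{N/(p-1)}$, so that the Sobolev quotient obeys $A_\epsilon/B_\epsilon^{p/p^*_s} \leq S + C\epsilon^{\gamma}$. An elementary maximization then yields
\begin{equation*}
\max_{t\geq 0}\Big[\f{t^p}{p}A_\epsilon - \f{t^{p^*_s}}{p^*_s}B_\epsilon\Big] = \f{s}{N}\Big(\f{A_\epsilon}{B_\epsilon^{p/p^*_s}}\Big)^{N/(sp)} \leq \f{s}{N} S^{N/(sp)} + C_1\epsilon^{\gamma}.
\end{equation*}
Moreover, since one easily verifies $q > q_1 > (N(p-2)+ps)/(N-ps)$ throughout the admissible range, Case 3 of Lemma~\ref{l:4ii} gives $C_\epsilon \geq k_7\,\epsilon^{\delta}$ with $\delta := N - (N-sp)(q+1)/p$.

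Next, I would show that the maximizer $t_\mu$ of $F(t):=I^+_\mu(tu_\epsilon)$ lies in an interval $[c_0,t_*]$ with $c_0>0$ independent of $\mu$: the upper bound $t_\mu\leq t_* := (A_\epsilon/B_\epsilon)^{1/(p^*_s-p)}$ is immediate from $F'(t_\mu)=0$ by dropping the (nonnegative) concave term, while the uniform lower bound follows from $F(t_\mu)\geq F(t_*)\geq \f{s}{2N}S^{N/(sp)}>0$ (for $\mu$ small) combined with $F(t)\leq \f{t^p}{p}A_\epsilon$. Combining this with the bound above gives
\begin{equation*}
\sup_{t\geq 0} I^+_\mu(tu_\epsilon) \;\leq\; \f{s}{N}S^{N/(sp)} + C_1\epsilon^{\gamma} - C_2\,\mu\,\epsilon^{\delta}.
\end{equation*}

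To close the estimate, take $\epsilon = \lambda\,\mu^{1/(\gamma-\delta)}$ with $\lambda>0$ small enough that $C_2\lambda^\delta > 2C_1\lambda^\gamma$ (the inequality $\gamma>\delta$ is easy to verify for $q\geq q_1$, since $\delta$ is decreasing in $q$ while $\gamma$ is $q$-independent). This produces $\sup_{t\geq 0}I^+_\mu(tu_\epsilon) \leq \f{s}{N}S^{N/(sp)} - C_3\,\mu^{\gamma/(\gamma-\delta)}$, so \eqref{J-mu} follows provided the exponent condition $\f{\gamma}{\gamma-\delta}\leq \kappa := \f{p^*_s}{p^*_s-q-1}$ holds. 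A direct computation using $p^*_s=Np/(N-sp)$ reduces this condition exactly to $q\geq q_1 = \f{N^2(p-1)}{(N-sp)(N-s)} - 1$, and the requirement that the interval $(q_1,p-1)$ be nonempty reduces to the quadratic $N^2 - sp(p+1)N + s^2p^2 > 0$, equivalently $N > \f{sp}{2}\big[p+1+\sqrt{(p+1)^2-4}\,\big]$. The main obstacle is precisely this balancing of $\epsilon$ against $\mu$: only in the regime $q>q_1$ does the optimal scaling $\epsilon\sim \mu^{1/(\gamma-\delta)}$ yield a negative correction that dominates the prescribed threshold $\mu^\kappa$. Finally, \eqref{feb-17-1} is an immediate consequence, since Lemma~\ref{N.mu-i-2} provides $\tilde{t}^+(u_\epsilon)u_\epsilon\in N^-_\mu$ with $I^+_\mu(\tilde{t}^+(u_\epsilon)u_\epsilon)\leq \sup_{t\geq 0}I^+_\mu(tu_\epsilon)$, whence $\tilde\al^-_\mu\leq \sup_{t\geq 0}I^+_\mu(tu_\epsilon)$.
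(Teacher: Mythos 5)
Your proposal is correct and follows essentially the same route as the paper: the same test function $u_\eps$, the same asymptotics for $\|u_\eps\|_{X_0}^p$ and $|u_\eps|_{L^{p^*_s}}^{p^*_s}$, Case 3 of Lemma \ref{l:4ii} (valid since $q_1>\frac{N(p-2)+ps}{N-ps}$), a lower bound on the location of the maximizer to capture the concave term, and a choice of $\eps$ as a power of $\mu$ whose exponent comparison reduces exactly to $q>q_1$ and $N>\frac{sp}{2}[p+1+\sqrt{(p+1)^2-4}]$. The only (cosmetic) differences are that you balance the two $\eps$-terms via $\eps=\lambda\mu^{1/(\gamma-\delta)}$ instead of the paper's choice $\eps^{\gamma}=\mu^{\kappa}$, and you bound the maximizer from below directly rather than splitting $[0,t_0]$ from $[t_0,\infty)$ — both yield the same equivalent condition.
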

 
\begin{proof}

Let $u_{\eps}$ be as defined in \eqref{u-eps}. Then we claim  \begin{align}\label{i2}
|u_\eps^+|_{L^{p^*_s}}=  |u_\eps|_{L^{p^*_s}}^{p^*_s} \geq S^{\f{N}{sp}} + o(\eps^{\f{N}{p-1}}).
 \end{align}
To see this, 
\bea\lab{Mar-5-1}
|u_\eps|_{L^{p^*_s}(\Om)}^{p^*_s}&=& \Iom |u_\eps|^{p^*_s}dx \geq \int_{\Om_\de} |u_\eps|^{p^*_s}dx\no\\
&=& \int_{\Om_\de}|U_\eps(x)|^{p^*_s}dx\no\\
&=& \int_{\Rn}|U_\eps(x)|^{p^*_s}dx-\int_{\Rn \setminus {\Om_\de}}|U_\eps(x)|^{p^*_s}dx.
\eea
Moreover, \Bea
\int_{\Rn \setminus {\Om_\de}}|U_\eps(x)|^{p^*_s}dx\leq \int_{\Rn \setminus B(0,R')}|U_\eps(x)|^{p^*_s}dx &=&  \f{1}{\eps^N}\int_{\Rn \setminus B(0,R')}U^{p^*_s}(\f{x}{\eps})dx\\
&\leq&C\int_{\f{R'}{\eps}}^{\infty}r^{N-1-\f{Np}{p-1}}dr\\
&\leq& C\eps^{\f{N}{p-1}}.
\Eea
Therefore substituting back to \eqref{Mar-5-1} we obtain 
$$|u_\eps|_{L^{p^*_s}(\Om)}^{p^*_s}\geq S^{\f{N}{sp}}-C\eps^{\f{N}{p-1}}.$$

Furthermore, a similar analysis as in \cite[Proposition 21]{SerVal} (see also \cite[Lemma 2.7]{MPSY}) yields, 
 for $\eps>0$ small $(0<\eps<\f{\de}{2})$ we have, 
 \begin{align}\label{i1}
  ||{u_\eps}||_{X_0}^p \leq S^{\f{N}{sp}} + o(\eps^{\f{N-ps}{p-1}}).
 \end{align}

Define, $$J(u):=\f{1}{p}||{u}||_{X_0}^p-\f{1}{p^*_s}|u^+|_{L^{p^*_s}}^{p^*_s}, \quad u \in X_0$$ and choose $\eps_0>0$ small such that 
\eqref{i1} and \eqref{i2} hold and Lemma \ref{l:4ii} is satisfied.  Let $\eps\in(0,\eps_0)$. Then, consider corresponding $u_0:=u_{\eps_0}$.
Let us consider the function $h: [0,\infty) \to \R$ defined by $h(t)=J(tu_0)$ for all $t \geq 0.$ It can be shown that $h$ attains
its maximum at $t=t_*=\bigg(\f{||{u_0}||_{X_0}^p}{|u_0^+|_{L^{p^*_s}}^{p^*_s}}\bigg)^{\f{1}{p^*-p}}$ and 
$\sup_{t \geq 0}J(tu_0)=\f{s}{N}\bigg(\f{||{u_0}||_{X_0}^p}{|u_0^+|_{L^{p^*_s}}^{p}}\bigg)^{\f{N}{sp}}.$ Using \eqref{i1} and \eqref{i2} a straight forward computation yields,

\begin{align}\label{i3}
 \sup_{t \geq 0}J(tu_0) \leq \f{s}{N}S^{\f{N}{sp}}+ o(\eps^{\f{N-sp}{p-1}}).
\end{align}
Since $I^+_{\mu}(tu_0)<0$ for $t$ small, we can find $t_0 \in (0,1)$ such that 
$$\sup_{0 \leq t \leq t_0}I^+_\mu(tu_0) \leq \f{s}{N}S^{\f{N}{sp}}-M\mu^{\f{p^*_s}{p^*_s-q-1}} ,$$ for $\mu>0$ small.
Hence, we are left to estimate $\sup_{t_0 \leq t} I^+_\mu(tu_0)$. 
\Bea
\sup_{t \geq t_0}I^+_\mu(tu_0)&=&\sup_{t \geq t_0}[J(tu_0)-\f{t^{q+1}}{q+1}|u_0^+|_{L^{q+1}}^{q+1}]\\
&\leq& \f{s}{N}S^{\f{N}{sp}}+o(\eps^{\f{N-sp}{p-1}})-\f{t^{q+1}}{q+1}|u_0|_{L^{q+1}}^{q+1}\\
&\leq&\begin{cases}
       \f{s}{N}S^{\f{N}{sp}}+c_1 \eps^{\f{N-ps}{p-1}}-c_2\mu \eps^{\f{(N-ps)(q+1)}{p(p-1)}}, \quad 0<q<\f{N(p-2)+ps}{N-sp}\\
       \f{s}{N}S^{\f{N}{sp}}+c_1 \eps^{\f{N-ps}{p-1}}-c_2\mu \eps^{\f{N}{p}}|ln \eps|, \,\,\ \quad q=\f{N(p-2)+ps}{N-sp}\\
       \f{s}{N}S^{\f{N}{sp}}+c_1 \eps^{\f{N-ps}{p-1}}-c_2\mu \eps^{N-\f{(N-sp)(q+1)}{p}}, \quad\f{N(p-2)+ps}{N-sp}<q<p-1.
      \end{cases}
\Eea
Choose $\eps\in (0,\f{\de}{2})$ such that $\eps^\f{N-sp}{p-1}=\mu^{\f{p^*_s}{p^*_s-q-1}}$. Then for $\f{N(p-2)+ps}{N-sp}<q<p-1$, the term
$\f{s}{N}S^{\f{N}{sp}}+c_1 \eps^{\f{N-ps}{p-1}}-c_2\mu \eps^{N-\f{(N-sp)(q+1)}{p}}$ reduces to $\f{s}{N}S^{\f{N}{sp}}+c_1 \mu^{\f{p^*_s}{p^*_s-q-1}}-c_2\mu \bigg(\mu^{\f{p^*}{p^*-q-1}}\bigg)^{(N-\f{(N-sp)(q+1)}{p})(\f{p-1}{N-ps})}$. Now, note that we can make 
$$c_1 \mu^{\f{p^*_s}{p^*_s-q-1}}-c_2\mu \bigg(\mu^{\f{p^*}{p^*-q-1}}\bigg)^{(N-\f{(N-sp)(q+1)}{p})(\f{p-1}{N-ps})} <-M\mu^{\f{p^*_s}{p^*_s-q-1}},$$ for $\mu>0$ small if we further choose
$(\f{p^*_s}{p^*_s-q-1})(\f{p-1}{p})[\f{Np}{N-ps}-(q+1)] < \f{p^*_s}{p^*_s-q-1}-1 $ i.e., if 
$q+1> \f{N^2(p-1)}{(N-sp)(N-s)}$. This proves \eqref{J-mu}. It is easy to see that \eqref{feb-17-1} follows by combining \eqref{J-mu} along with Lemma \ref{N.mu-i-2} .

\end{proof}

\subsection{Sign changing critical points of $I_{\mu}$}

Define $$\mathcal{N}^{-}_{\mu,1}:=\{u\in N_{\mu}: u^{+}\in N_{\mu}^{-}\},$$
$$\mathcal{N}^{-}_{\mu, 2}:=\{u\in N_{\mu}: -u^{-}\in N_{\mu}^{-}\},$$
We set
\begin{align}\label{ba-1-2}
 \ba_1=\inf_{u \in \mathcal{N}_{\mu,1}^-}I_\mu(u) \quad\text{and}\quad \ba_2=\inf_{u \in \mathcal{N}_{\mu,2}^-}I_\mu(u).
\end{align}

\begin{theorem} \label{t:4ii}
Let $p \geq 2$,  $N> \f{sp}{2} [p+1+\sqrt {(p+1)^2-4 }]$ and $q_1<q<p-1$, where $q_1$ is defined as in \eqref{feb-26-5}.
Assume $0<\mu<\text{min}\{\tilde\mu, \tilde\mu_1, \mu_{*}, \mu_1\}$, where $\tilde\mu$, $\tilde \mu_1$ and $\mu_1$ are as in \eqref{mu'}, Lemma \ref{inf-} and  Lemma \ref{l:6-ii} respectively.   
$\mu_*$ is chosen such that $\tilde\al_{\mu}^-$ is achieved 
in $(0, \mu_*)$.  Let $\ba_1$, $\ba_2$, $\tilde\al_{\mu}^-$ be  defined as in \eqref{ba-1-2} and \eqref{alpha-mu} respectively. 
\begin{itemize}
\item[(i)] Let $\ba_1<\tilde\al_{\mu}^{-}$. Then, there exists a sign changing critical point $\tilde w_1$ of 
$I_\mu$ such that  $\tilde w_1\in \mathcal{N}_{\mu,1}^-$ and $I_\mu(\tilde w_1)=\ba_1$.  
\item[(ii)]
If $\ba_2< \tilde\al_{\mu}^{-}$, then there exists a sign changing critical point $\tilde w_2$ of $I_\mu$ such that 
 $\tilde w_2\in \mathcal{N}_{\mu,1}^-$ and $I_\mu(\tilde w_2)=\ba_2$.
 \end{itemize}
\end{theorem}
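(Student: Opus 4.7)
The plan is to apply Ekeland's variational principle to $I_\mu$ restricted to $\mathcal{N}_{\mu,1}^-$ (respectively $\mathcal{N}_{\mu,2}^-$), extract a minimizing sequence at level $\beta_1$ (resp. $\beta_2$), upgrade it to a genuine Palais--Smale sequence for $I_\mu$ on $X_0$, and pass to the limit using Lemma~\ref{ps}. The key input is that the hypothesis $\beta_i<\tilde\alpha_\mu^-$ combined with Lemma~\ref{inf-} forces
\[
\beta_i<\tfrac{s}{N}S^{N/sp}-M\mu^{p^*_s/(p^*_s-q-1)},
\]
which places the energy level inside the compactness range of Lemma~\ref{ps}.

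For part (i), I would first observe that $\mathcal{N}_{\mu,1}^-$ is nonempty (using Lemma~\ref{N.mu-i}) and, by (\ref{Mar-23-1}) with $\mu<\tilde\mu$ so that $N_\mu^0=\emptyset$ (Lemma~\ref{N.mu-ii}), that $I_\mu$ is coercive on $N_\mu$. Ekeland's principle then produces $\{u_n\}\subset\mathcal{N}_{\mu,1}^-$ with $I_\mu(u_n)\to\beta_1$, $\{u_n\}$ bounded in $X_0$, and $u_n$ ``almost critical'' in the constrained sense. Applying Lemma~\ref{N.mu-iii} to $u_n^+$ gives a local $C^1$ parametrization of $N_\mu^-$ near $u_n^+$; projecting arbitrary test directions $\varphi\in X_0$ back to the tangent of the constraint yields a Lagrange-multiplier identity of the form $\|I_\mu'(u_n)-\lambda_n G'(u_n)\|_{X_0'}\to 0$, where $G(u)=\<I_\mu'(u),u\>_{X_0}$. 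Because $\varphi''_{u_n^+}(1)<0$ is bounded away from $0$ uniformly on $N_\mu^-$ (again by $N_\mu^0=\emptyset$), one concludes $\lambda_n\to 0$, so $\{u_n\}$ is a genuine $(PS)_{\beta_1}$ sequence for $I_\mu$. Lemma~\ref{ps} now gives $u_n\to\tilde w_1$ strongly in $X_0$, so $I_\mu(\tilde w_1)=\beta_1$ and $I_\mu'(\tilde w_1)=0$. Strong convergence plus the closedness of $N_\mu^-$ (which follows from $N_\mu^0=\emptyset$) yields $\tilde w_1\in\mathcal{N}_{\mu,1}^-$; in particular $\tilde w_1^+\in N_\mu^-$, so $\tilde w_1^+\not\equiv0$ by (2.13).

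To see that $\tilde w_1$ is sign-changing it suffices to rule out $\tilde w_1^-\equiv 0$. Indeed, if $\tilde w_1^-\equiv 0$ then $\tilde w_1=\tilde w_1^+\in N_\mu^-$, and consequently
\[
I_\mu(\tilde w_1)=I_\mu^+(\tilde w_1)\geq\tilde\alpha_\mu^-,
\]
contradicting $\beta_1<\tilde\alpha_\mu^-$. Hence $\tilde w_1$ changes sign. Part (ii) is entirely analogous: replace $u^+$ by $-u^-$ throughout, use Lemma~\ref{N.mu-iii} applied to $-u_n^-$, and apply the same contradiction $I_\mu(\tilde w_2)\geq\tilde\alpha_\mu^-$ if $\tilde w_2^+\equiv0$.

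The main technical obstacle is the Lagrange-multiplier step: converting the Ekeland constrained almost-critical sequence into an unconstrained $(PS)_{\beta_1}$ sequence for $I_\mu$. This requires a quantitative bound for $\varphi''_{u_n^+}(1)$ away from zero along the sequence, which in turn depends on the hypothesis $\mu<\tilde\mu$ and on a uniform application of the implicit function theorem hidden in Lemma~\ref{N.mu-iii}. A subordinate subtlety is checking that the set $\mathcal{N}_{\mu,1}^-$ is closed under strong convergence, so that the limit $\tilde w_1$ inherits the membership $\tilde w_1^+\in N_\mu^-$; this again rests on $N_\mu^0=\emptyset$ together with the continuity of $u\mapsto u^+$ in $X_0$ and the continuity of $\varphi''_{u^+}(1)$ in $u$.
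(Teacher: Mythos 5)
Your overall strategy coincides with the paper's: Ekeland's principle on $\mathcal{N}_{\mu,1}^-$, upgrade to a free $(PS)_{\beta_1}$ sequence, then Lemma~\ref{ps} (whose hypothesis is met because $\beta_1<\tilde\al_\mu^-$ and Lemma~\ref{inf-} give $\beta_1<\f{s}{N}S^{N/sp}-M\mu^{p^*_s/(p^*_s-q-1)}$), and closedness of $\mathcal{N}_{\mu,1}^-$ to identify the limit. Your way of seeing that the limit changes sign (if $\tilde w_1^-\equiv 0$ then $\tilde w_1=\tilde w_1^+\in N_\mu^-$ and $I_\mu(\tilde w_1)=I_\mu^+(\tilde w_1)\ge\tilde\al_\mu^-$, contradicting $\beta_1<\tilde\al_\mu^-$) is a legitimate and slightly cleaner variant of the paper's Step~3, which instead proves $\|u_n^-\|_{X_0}\ge b>0$ along the sequence by the same energy splitting $I_\mu(u)\ge I_\mu(u^+)+I_\mu(u^-)$.

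The gap is in the step you yourself identify as the main obstacle. First, $\mathcal{N}_{\mu,1}^-$ is not a smooth constraint of the form $\{G=0\}$ with $G(u)=\<I_\mu'(u),u\>$: membership requires $u^+\in N_\mu^-$, i.e.\ a condition on the positive part, and $u\mapsto u^+$ is not differentiable, so there is no ``tangent space of the constraint'' onto which arbitrary directions $\varphi$ can be projected, and a single Lagrange multiplier $\lambda_n G'(u_n)$ does not encode the constraint. More concretely, Ekeland's inequality is only available for comparison points $z\in\mathcal{N}_{\mu,1}^-$; if you perturb $u_n$ in a generic direction and rescale back onto $N_\mu$, the positive part of the result need not remain in $N_\mu^-$, and the inequality cannot be invoked. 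The paper's Step~4 resolves this by perturbing only in the sign-adapted directions $v^+\chi_{\{u_n\ge0\}}$ and $v^-\chi_{\{u_n\le0\}}$ separately and rescaling with the function $g_n$ obtained from Lemma~\ref{N.mu-iii} applied to $u_n^+$, so that $z_{\tilde\rho_n}^+=g_n(v_n^-)(u_n^++v_n^-)\in N_\mu^-$ by construction. Second, your claim that $\varphi_{u_n^+}''(1)$ is ``bounded away from $0$ uniformly on $N_\mu^-$ (again by $N_\mu^0=\emptyset$)'' does not follow from $N_\mu^0=\emptyset$, which only gives pointwise nonvanishing: along a sequence the quantity $(p-1-q)\|u_n^+\|_{X_0}^p-(p^*_s-q-1)|u_n^+|^{p^*_s}_{L^{p^*_s}}$ could a priori tend to $0$. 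Ruling this out is exactly the content of the appendix Lemma~\ref{l:6-ii}, which requires the additional smallness condition $\mu<\mu_1$ (note that $\mu_1$ appears explicitly in the hypotheses of the theorem precisely for this reason, and is not implied by $\mu<\tilde\mu$). Without these two ingredients your argument does not close; with them, it reduces to the paper's proof.
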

\begin{proof}
(i) Let $\ba_1< \tilde \al_{\mu}^{-}$. We prove the theorem in few steps.

\vspace{2mm}

{\bf Step 1:}  $\mathcal{N}_{\mu,1}^-$ and $\mathcal{N}_{\mu,2}^-$ are closed sets. \\
To see this, let $\{u_n\} \subset \mathcal{N}_{\mu,1}^-$ such that $u_n \to u$ in $X_0$.
It is easy to note that $|u_n|, |u|\in X_0$ and  $|u_n| \to |u|$ in $X_0$. This in turn implies 
$u_n^+ \to u^+$ in $X_0$ and  $L^\ga(\Rn)$ for $\ga \in [1,p^*_s]$ (by Sobolev inequality).
Since, $u_n \in \mathcal{N}_{\mu,1}^-$, we have  $u_n^+ \in N_{\mu}^-$. Therefore
 \be\label{4-10}
  ||{u_n^+}||_{X_0}^p-|u_n^+|^{p^*_s}_{L^{p^*_s}(\Om)}-\mu|u_n^+|^{q+1}_{L^{q+1}(\Om)}=0
  \ee
  and
\be \label{4-11} 
  (p-1-q)||{u_n^+}||_{X_0}^p-(p^*_s-q-1)|u_n^+|^{p^*_s}_{L^{p^*_s}(\Om)}<0 \,\,\forall\  n \geq 1. 
 \ee
Passing to the limit as $n\to\infty$, we obtain $u^+\in N_{\mu}$ and \\ 
$(p-1-q)||{u^+}||_{X_0}^p-(p^*_s-q-1)|u^+|^{[p^*_s}_{L^{p^*_s}(\Om)}\leq 0$.
But, from Lemma \ref{N.mu-ii}, we know $N_{\mu}^0=\emptyset$.
Therefore $u^+\in N_{\mu}^{-}$ and hence $\mathcal{N}_{\mu,1}^{-}$ is closed. 
Similarly it can be shown that $\mathcal{N}_{\mu, 2}^{-}$ is also closed. Hence step 1 follows.

\vspace{2mm}

By Ekeland Variational Principle there exists sequence $\{u_n\} \subset \mathcal{N}_{\mu,1}^-$ 
such that 
\be\lab{4-11'}
I_\mu(u_n) \to \ba_1 \quad\text{and}\quad I_\mu(z) \geq I_\mu(u_n)-\frac{1}{n}||{u_n-z}||_{X_0} \quad\forall\ z\in \mathcal{N}_{\mu,1}^-.
\ee

\vspace{2mm}

{\bf Step 2:}  $\{u_n\}$ is uniformly bounded in $X_0$. \\
To see this, we notice $u_n \in \mathcal{N}_{\mu,1}^-$ implies $u_n \in N_{\mu}$ and this in turn implies
$\<I'_\mu(u_n),u_n\>=0$, that is,
$$||{u_n}||_{X_0}^p=|u_n|^{p^*_s}_{L^{p^*_s}(\Om)}+\mu|u_n|^{q+1}_{L^{q+1}(\Om)}. $$
Since $I_\mu(u_n)\to\ba_1$, using the above equality in the expression of $I_\mu(u_n)$, we get, for $n$ large enough
\Bea
\frac{s}{N}||{u_n}||_{X_0}^p&\leq& \ba_1+1+\displaystyle\left(\frac{1}{q+1}-\frac{1}{p^*_s}\right)\mu|u_n|^{q+1}_{L^{q+1}(\Om)}\\
&\leq& C(1 + ||{u_n}||_{X_0}^{q+1}).
\Eea
As $p>q+1,$ the above implies $\{u_n\}$ is uniformly bounded in $X_0$.\\
We note that for any $u\in X_0$, we have
\bea\lab{4-13}
||{u}||_{X_0}^p&=&\int_{\R^{2N}}\frac{|u(x)-u(y)|^p}{|x-y|^{N+ps}}dxdy\no\\
&=&\int_{\R^{2N}}\frac{(|u(x)-u(y)|^2)^{\frac{p}{2}}}{|x-y|^{N+ps}}dxdy\no\\
&=&\int_{\R^{2N}}\frac{\bigg(\big|(u^+(x)-u^+(y)\big)-\big(u^-(x)-u^-(y)\big)|^2\bigg)^{\frac{p}{2}}}{|x-y|^{N+ps}}dxdy\no\\
&=&\int_{\R^{2N}}\frac{\bigg(\big(u^+(x)-u^+(y)\big)^2+\big(u^-(x)-u^-(y)\big)^2+2u^+(x)u^-(y)+2u^+(y)u^-(x)\bigg)^{\frac{p}{2}}}{|x-y|^{N+ps}}dxdy\no\\
&\geq&\int_{\R^{2N}}\frac{\bigg(\big(u^+(x)-u^+(y)\big)^2+\big(u^-(x)-u^-(y)\big)^2\bigg)^{\frac{p}{2}}}{|x-y|^{N+ps}}dxdy\no\\
&\geq&\int_{\R^{2N}}\frac{\bigg(\big(u^+(x)-u^+(y)\big)^2\bigg)^{\frac{p}{2}}}{|x-y|^{N+ps}}dxdy
+\int_{\R^{2N}}\frac{\bigg(\big(u^-(x)-u^-(y)\big)^2\bigg)^{\frac{p}{2}}}{|x-y|^{N+ps}}dxdy\no\\
&=& ||{u^+}||_{X_0}^p+||{u^-}||_{X_0}^p
\eea
By a simple calculation, it follows 
\be\lab{4-14}|u|^{p^*_s}_{L^{p^*_s}(\Om)}=|u^+|^{p^*_s}_{L^{p^*_s}(\Om)}+|u^-|^{p^*_s}_{L^{p^*_s}(\Om)}\quad\text{and}\quad
|u|^{q+1}_{L^{q+1}(\Om)}=|u^+|^{q+1}_{L^{q+1}(\Om)}+|u^-|^{q+1}_{L^{q+1}(\Om)}. \ee
Combining \eqref{4-13} and \eqref{4-14}, we obtain
\be\lab{4-14'}
I_\mu(u) \geq I_\mu(u^+)+I_\mu(u^-) \quad\forall\quad u\in X_0.
\ee 
{\bf Step 3:} There exists $b >0$ such that $||u_n^-||_{X_0} \geq b$ for all $n \geq 1.$ \\
Suppose the step is not true.  Then for each $k \geq 1,$ there exists $u_{n_k}$ such that
\begin{align}\label{4-12}
||u_{n_k}^-||_{X_0}<\frac{1}{k}\,\,\, \forall\ k \ \geq 1.
\end{align}
Therefore, $||u_{n_k}^-||_{X_0} \to 0$ as $k\to\infty$ and by Sobolev inequality 
$$|u_{n_k}^-|_{L^{p^*_s}(\Om)} \to 0,\,\,\,|u_{n_k}^-|_{L^{q+1}(\Om)} \to 0, \quad\text{as}\quad k \to \infty.$$
Consequently, $I_\mu(u_{n_k}^-) \to 0$ as $k \to \infty$.  As a result, using \eqref{4-14'} we have
$$
\ba_1=I_\mu(u_{n_k})+o(1)\geq I_\mu(u^+_{n_k})+I_\mu(u^-_{n_k})+o(1)=I_{\mu}^+(u^+_{n_k})+o(1)\geq \tilde\al_{\mu}^{-}+o(1).
$$
This is a contradiction to the hypothesis. Hence step 3 follows.

\vspace{2mm}

{\bf Step 4}: $I'_\mu(u_n) \to 0$ in $(X_0)'$ as $n \to \infty$.

Since $u_n\in \mathcal{N}^{-}_{\mu, 1}$, we have $u_n^+\in N_{\mu}^-$. Thus by Lemma \ref{N.mu-iii} applied to the element $u_{n}^+$, there exists 
\be\lab{g_n}
\rho_n :=\rho_{u_n^+} \quad\text{and}\quad g_n:=g_{\rho_{u_n^+}},
\ee such that 
\be\lab{4-15}
g_n(0)=1, \quad \big(g_n(w)\big)(u_n^++w) \in N_{\mu}^- \quad\forall\quad w \in B_{\rho_n}(0).
\ee  Choose $0<\tilde\rho_n<\rho_n$ such that $\tilde\rho_n \to 0$.  Let $v\in X_0$ with $||v||_{X_0}=1$.
Define $$v_n := -\tilde\rho_n [v^+\chi_{\{u_n \geq 0\}}-v^-\chi_{\{u_n \leq 0\}}]$$
and 
\Bea
z_{\tilde\rho_n}&:=&\big(g_n(v_n^-)\big)(u_n-v_n)\\
&=:&z_{\tilde\rho_n}^1-z_{\tilde\rho_n}^2,
\Eea
where $z_{\tilde\rho_n}^1:=\big(g_n(v_n^-)\big)(u_n^{+} +\tilde\rho_n v^+\chi_{\{u_n \geq 0\}})$ and 
$z_{\tilde\rho_n}^2:=\big(g_n(v_n^-)\big)(u_n^{-} +\tilde\rho_n v^-\chi_{\{u_n \leq 0\}}).$
Note that $v_n^-=\tilde\rho_n v^+\chi_{\{u_n \geq 0\}}.$ So, $||v_n^-||_{X_0} \leq \tilde\rho_n ||v||_{X_0}\leq \tilde\rho_n.$ Hence taking $w=v_n^-$ in \eqref{4-15}
 we have, $z_{\tilde\rho_n}^+=  z_{\tilde\rho_n}^1 \in N_{\mu}^{-}$ so $z_{\tilde\rho_n} \in N_{\mu, 1}^-.$
Hence, 
$$I_\mu(z_{\tilde\rho_n}) \geq I_\mu(u_n)-\frac{1}{n} ||u_n-z_{\tilde\rho_n}||_{X_0}. $$

This implies,
\bea\label{22.02.E1}
\frac{1}{n}||u_n-z_{\tilde\rho_n}||_{X_0}&\geq& I_\mu(u_n)-I_\mu(z_{\tilde\rho_n})\notag\\
&=&\<I'_\mu(u_n),u_n-z_{\tilde\rho_n}\>+o(1) ||u_n-z_{\tilde\rho_n}||_{X_0}\notag\\
&=&-\<I'_\mu(u_n),z_{\tilde\rho_n}\>+o(1) ||u_n-z_{\tilde\rho_n}||_{X_0},
\eea
as $\<I'_\mu(u_n),u_n\>=0 $ for all $n.$
Let $w_n=\tilde\rho_n v.$ Then,
\begin{align}\label{D1}
\frac{1}{n} ||u_n-z_{\tilde\rho_n}||_{X_0} \geq -\<I'_\mu(u_n),w_n+z_{\tilde\rho_n}\>+\<I'_\mu(u_n),w_n\>\notag\\
             +o(1) ||u_n-z_{\tilde\rho_n}||_{X_0}.
\end{align}
Now, $\<I'_\mu(u_n),w_n\>=\<I'_\mu(u_n),\tilde\rho_n v\>=\tilde\rho_n\<I'_\mu(u_n), v\>.$
Define $$\overline{v_n}:=v^+\chi_{\{u_n \geq 0\}}-v^-\chi_{\{u_n \leq 0\}}.$$

So, $z_{\tilde\rho_n}=g_n(v_n^-)(u_n-\tilde\rho_n\overline{v_n}).$
Hence we have,
\begin{align}\label{D2}
\<I'_\mu(u_n), w_n+z_{\tilde\rho_n}\>= \<I'_\mu(u_n),w_n+g_n(v_n^-)(u_n-\tilde\rho_n\overline{v_n})\> =\<I'_\mu(u_n),\tilde\rho_n v-g_n(v_n^-)\tilde\rho_n\overline{v_n}\>\no\\
 =\tilde\rho_n\<I'_\mu(u_n),v-g_n(v_n^-)\overline{v_n}\>
\end{align}

Using \eqref{D2} in \eqref{D1}, we have
\begin{align}\label{22.02.E2}
\frac{1}{n} ||u_n-z_{\tilde\rho_n}||_{X_0} \geq -\tilde\rho_n\<I'_\mu(u_n),v-g_n(v_n^-)\overline{v_n}\>\notag\\
             +\tilde\rho_n\<I'_\mu(u_n),v\>+o(1) ||u_n-z_{\tilde\rho_n}||_{X_0}.
\end{align}

First we will estimate $\<I'_\mu(u_n),v-g_n(v_n^-)\overline{v_n}\>$. For this,
\Bea
v-g_n(v_n^-)\overline{v_n}&=&v^+-v^--g_n(v_n^-)[v^+\chi_{\{u_n \geq 0\}}-v^-\chi_{\{u_n \leq 0\}}]\\
&=&v^+[g_n(0)-g_n(v_n^-)\chi_{\{u_n \geq 0\}}]-v^-[g_n(0)-g_n(v_n^-)\chi_{\{u_n \leq 0\}}]\\
&=&- v^+[\<g_n'(0),v_n^-\>+o(1) ||v_n^-||_{X_0}]+v^-[\<g_n'(0),v_n^-\>+o(1)||{v_n^-}||_{X_0}]\\
&=&- v^+\tilde\rho_n[\<g_n'(0),v^+\>+o(1)||{v^+}||_{X_0}]+v^-\tilde\rho_n[\<g_n'(0),v^+\>+o(1) ||{v^+}||_{X_0}]\\
&=&-\tilde\rho_n\big[\<g_n'(0),v^+\>+o(1) ||{v^+}||_{X_0}\big]v.
\Eea
Therefore,
\begin{align}\lab{Mar-5-2}
 \<I'_\mu(u_n),v-g_n(v_n^-)\overline{v_n}\> 
     =-\tilde\rho_n\big(\<g_n'(0),v^+\>+o(1)\norm{v^+}\big)\<I'_\mu(u_n),v\>.
\end{align}

{\bf Claim :}  $g_n(v_n^-)$ is uniformly bounded in $X_0.$ 

To see this, we observe that  from \eqref{4-15} we have, $g_n(v_n^-)(u_n^++v_n^-) \in N^-_\mu \subset N_\mu,$ which implies,
$$||{c_n\tilde\psi_n}||_{X_0}^p-\mu|c_n\tilde\psi_n|^{q+1}_{L^{q+1}(\Om)}-|c_n\tilde\psi_n|_{L^{p^*_s}(\Om)}^{p^*_s}=0,$$
where $c_n:=g_n(v_n^-)$ and $\tilde\psi_n:=u_n^+ + v_n^-.$
Dividing by $c_n^{p^*}$ we have,
\begin{align}\label{D4}
 c_n^{p-p^*}||{\tilde\psi_n}||_{X_0}^p-\mu c_n^{q+1-p^*}|\tilde\psi_n|^{q+1}_{L^{q+1}(\Om)}=|\tilde\psi_n|_{L^{p^*_s}(\Om)}^{p^*_s}.
\end{align}
Note that $||\tilde\psi_n||_{X_0}$ is uniformly bounded above as $||u_n||_{X_0}$ is uniformly bounded and $\tilde\rho_n=o(1)$. Also,
$||{\tilde\psi_n}||_{X_0} \geq ||{u_n^{+}}||_{X_0}-\tilde\rho_n ||{v}||_{X_0}$. Note that $||{u_n^{+}}||_{X_0} \geq \tilde b$ for large $n$.
If not, then $||{u_n^+}||_{X_0} \to 0$ as $n \to \infty.$ As $u_n \in N_{\mu,1}^-,$ so $u_n^+ \in N_\mu^-.$
Now, $N^{-}_{\mu}$ is a closed set and $0 \notin N^{-}_{\mu}$ and therefore $||{u_n^{-}}||_{X_0} \not\to 0$ as $n \to \infty.$  
Thus there exists $\tilde b \geq 0$ such that $||{u_n^+}||_{X_0} \geq \tilde b>0$.  This in turn implies that 
$||\tilde\psi_n||_{X_0} \geq C$, for some $C>0$ by choosing $\tilde\rho_n$ small enough. Consequently, if $c_n$ is not uniformly bounded,
we obtain LHS of \eqref{D4} converges to $0$ as $n\to\infty $. 

On the other hand,  $$|\tilde\psi_n|_{L^{p^*_s}(\Om)} \geq|u_n^{+}|_{L^{p^*_s}(\Om)}-\tilde\rho_n|v|_{L^{p^*_s}(\Om)}>c,$$ for some positive constant $c$ as $\rho_n=o(1)$ and $u_n^+\in N_{\mu}^-$ implies 
$$(p^*_s-1-q)|u_n^+|_{L^{p^*_s}(\Om)}^{p^*_s}>(p-1-q)||u_n^+||_{X_0}^p>(p-1-q)\tilde b^p.$$ 
Hence, the claim follows.

\vspace{3mm}

Now using the fact that $g_n(0)=1$ and the above claim we obtain
\Bea
||{u_n-z_{\tilde\rho_n}}||_{X_0}&\leq& ||{u_n}||_{X_0}\big|1-g_n(v_n^-)\big|+\tilde\rho_n ||{\overline{v_n}}||_{X_0}g_n(v_n^-)\\
&\leq& ||{u_n}||_{X_0}\big[|\<g_n'(0),v_n^-\>|+o(1) ||{\overline{v_n}||_{X_0}}\big]+\tilde\rho_n||v||_{X_0}g_n(v_n^-)\\
&\leq& \tilde\rho_n \big[||{u_n}||_{X_0}\<g_n'(0),\overline{v_n}^+\>+o(1)||{v}||_{X_0}+||{v}||_{X_0}g_n(v_n^-)\big]\\
&\leq& \tilde\rho_n C.
\Eea
Substituting this and \eqref{Mar-5-2} in \eqref{22.02.E2} yields
$$\tilde\rho_n\bigg(\<g_n'(0),v^+\>+o(1) ||{v^+}||_{X_0}\bigg)\<I'_\mu(u_n),v\>+  \<I'_\mu(u_n),v\>\tilde\rho_n+\tilde\rho_n o(1) \leq \tilde\rho_n.\f{C}{n}.$$
This implies
$$\bigg[\big(\<g_n'(0),v^+\>+o(1) ||{v^+}||_{X_0}\big)+1\bigg]    \<I'_\mu(u_n),v\> \leq \f{C}{n}+o(1)\quad\mbox{for all}\quad n \geq n_0.$$ Since $|\<g_n'(0),v^+\>|$ is uniformly bounded (see Lemma \ref{l:6-ii} in Appendix) , letting $n \to \infty$ we have
$I'_\mu(u_n) \to 0$ in $(X_0)'.$ Hence the step 4 follows.

\vspace{5mm}

Therefore $\{u_n\}$ is a (PS) sequence of $I_{\mu}$ at level $\ba_1< \tilde\al_{\mu}^{-} $.
From lemma \ref{inf-}, it follows that 
$$\tilde\al_{\mu}^{-}<\frac{s}{N}S^{\frac{N}{ps}}-M\mu^{\frac{p^*_s}{p^*_s-q-1}}\quad\mbox{for}\quad \mu \in(0,\tilde{\mu}_1),  $$ 
where  $M=\f{\big(pN-(N-ps)(q+1)\big)(p-1-q)}{p^2(q+1)}\big(\f{(p-1-q)(N-ps)}{p^2s}\big)^\f{q+1}{p^*_s-q-1}|\Om|.$ 
Thus,
$$\ba_1<\tilde\al_{\mu}^{-}<\frac{s}{N}S^{\frac{N}{ps}}-M\mu^{\frac{p^*_s}{p^*_s-q-1}}.$$
On the other hand, it follows from the Lemma \ref{ps} that $I_\mu$ satisfies $PS$ at level $c$ for
$$c<\frac{s}{N}S^{\frac{N}{ps}}-M\mu^{\frac{p^*_s}{p^*_s-q-1}},$$

this yields, there exists $u\in X_0$ such that $u_n\to u$ in $X_0$. 
By doing a simple calculation we get $u_n^- \to u^-$  in $X_0$. 
Consequently, by Step 3  $||{u^-}||_{X_0} \geq b$. As $\mathcal{N}_{\mu,1}^-$ is a closed set and $u_n \to u$, we obtain $u \in 
\mathcal{N}_{\mu,1}^-$,  that is, $u^+ \in {N}_{\mu}^-$ and $u^+ \neq 0. $
Therefore $u$ is a solution of $(\mathcal{P}_{\mu})$ with $u^+$ and $u^-$ are both nonzero. Hence, $u$ is a sign-changing solution of $(\mathcal{P}_{\mu})$. Define $\tilde w_1:=u$. This completes the proof of part (i) of the theorem.

\vspace{2mm}

Proof of part (ii) is similar to part (i) and we omit the proof. 
\end{proof}

\begin{theorem}\lab{t:4i}
Let $\ba_1, \ba_2\geq  \tilde\al_{\mu}^{-}$ where $\ba_1$, $\ba_2$, $ \tilde\al_{\mu}^{-}$ be defined as in \eqref{ba-1-2} and \eqref{alpha-mu} respectively. Then,  there exists $\mu_0>0$ such that for any $\mu\in(0,\mu_0)$, $I_{\mu}$ has a sign changing critical point in the following cases:\\

(i) for $p\geq\f{3+\sqrt{5}}{2}$, there exists $q_2:=
 \f{Np}{N-sp}-\f{p}{p-1}$ such that when $q>q_2$ and $N>sp(p^2-p+1)$,
   
(ii) for $2\leq p<\f{3+\sqrt{5}}{2}$, there exists $q_3:= \f{N(p-1)}{N-sp}-\f{p-1}{p}$ such that when $q>q_3$ and $N>sp(p+1)$.
 \end{theorem}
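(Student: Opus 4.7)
First observe that the non-negative solution $w_1$ from Remark~\ref{pos} lies in $\mathcal{N}_{\mu,1}^-$ and, by symmetry, $-w_1$ lies in $\mathcal{N}_{\mu,2}^-$, both with $I_\mu(w_1)=I_\mu(-w_1)=I_\mu^+(w_1)=\tilde\al_\mu^-$. Hence $\ba_1,\ba_2\le \tilde\al_\mu^-$ automatically, and the hypothesis $\ba_1,\ba_2\ge \tilde\al_\mu^-$ forces $\ba_1=\ba_2=\tilde\al_\mu^-$, the infima being realised by the non-sign-changing functions $\pm w_1$. Pure minimisation on $\mathcal{N}_{\mu,1}^-$ (or $\mathcal{N}_{\mu,2}^-$) therefore cannot yield a sign-changing critical point, and the plan is to locate a critical level strictly above $\tilde\al_\mu^-$ but still below the single-bubble Palais--Smale threshold
$\tilde\al_\mu^- + \tfrac{s}{N}S^{N/sp} - M\mu^{p^*_s/(p^*_s-q-1)}$.

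\smallskip
\noindent
The competitor path will be built from $w_1$ and a bubble. Choose $x_0\in \Om$ with $B_{\theta\de}(x_0)\subset\subset\Om$, let $u_\eps$ be the concentrated profile \eqref{u-eps} centred at $x_0$, and consider
\[
\eta_{\eps,t}(x)=w_1(x)-t\,u_\eps(x-x_0),\qquad t\in[0,T],
\]
together with a scaling $\lambda(\eps,t)>0$ (constructed by the implicit function theorem, in the spirit of Lemma~\ref{N.mu-iii}, after first projecting $\eta_{\eps,t}^+$ onto $N_\mu^-$ via Lemma~\ref{N.mu-i-2}) so that $\lambda(\eps,t)\eta_{\eps,t}\in\mathcal{N}_{\mu,1}^-$; for $t$ sufficiently large, $\eta_{\eps,t}^-\not\equiv 0$ and the projection is sign-changing. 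The crucial step is the energy ceiling
\[
\sup_{t\ge 0} I_\mu\bigl(\lambda(\eps,t)\eta_{\eps,t}\bigr) \;<\; \tilde\al_\mu^- + \frac{s}{N}S^{N/sp} - M\mu^{p^*_s/(p^*_s-q-1)},
\]
for $\mu$ sufficiently small. To prove this, expand $I_\mu(w_1-tu_\eps)$ and isolate: (a) the piece $I_\mu(w_1)=\tilde\al_\mu^-$; (b) the self-energy $\sup_{s\ge 0}I_\mu(su_\eps)=\tfrac{s}{N}S^{N/sp}+O(\eps^{(N-sp)/(p-1)})-c_0\mu\eps^{\kappa(q)}$ provided by Lemmas~\ref{l:4ii} and~\ref{inf-}; (c) cross integrals coming from the non-bilinearity of the nonlocal $p$-form, which for $p\ge 2$ are bounded using $\bigl||a|^{p-2}a-|b|^{p-2}b\bigr|\le C_p(|a|+|b|)^{p-2}|a-b|$ and reduce, modulo harmless terms, to the four integrals $\int_\Om w_1^{p^*_s-1}u_\eps$, $\int_\Om w_1 u_\eps^{p^*_s-1}$, $\int_\Om w_1^q u_\eps$, $\int_\Om w_1 u_\eps^q$ controlled by Lemma~\ref{l:4i}. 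Imposing the balance $\eps^{(N-sp)/(p-1)}\sim\mu^{p^*_s/(p^*_s-q-1)}$ of Lemma~\ref{inf-}, one needs the subcritical gain to defeat every positive cross perturbation; the dominant cross term switches character at $p=(3+\sqrt 5)/2$, which is exactly what produces the two regimes and thresholds $q>q_2$, $N>sp(p^2-p+1)$ (resp.\ $q>q_3$, $N>sp(p+1)$) stated in the theorem. This exponent-bookkeeping is the \emph{main technical obstacle} of the proof.

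\smallskip
\noindent
Granted the energy ceiling, apply Ekeland's variational principle on a sign-changing neighbourhood of the projected path, running the argument of Steps~2--4 of Theorem~\ref{t:4ii} verbatim, to obtain a $(PS)_c$ sequence $\{u_n\}\subset\mathcal{N}_{\mu,1}^-$ at a level $c\in\bigl(\tilde\al_\mu^-,\,\tilde\al_\mu^- + \tfrac{s}{N}S^{N/sp} - M\mu^{p^*_s/(p^*_s-q-1)}\bigr)$. Because $c$ is above the single-bubble threshold of Lemma~\ref{ps}, a Struwe-type profile decomposition is needed: any such PS sequence splits as $u_n=w_1+v_n+o(1)_{X_0}$, so that $\{v_n\}$ is a PS sequence of $I_\mu$ at the reduced level $c-\tilde\al_\mu^-<\tfrac{s}{N}S^{N/sp}-M\mu^{p^*_s/(p^*_s-q-1)}$, which falls within the range covered by Lemma~\ref{ps}. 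Therefore $v_n\to v_\infty$ strongly in $X_0$, and $\tilde u:=w_1+v_\infty$ is a critical point of $I_\mu$; its negative part survives to the limit because the $L^{p^*_s}$-mass of $\eta_{\eps,t}^-$ is uniformly bounded away from zero along the competitor path, a property preserved by strong convergence. Hence $\tilde u$ is sign-changing, completing the proof in both cases (i) and (ii).
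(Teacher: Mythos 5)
Your overall architecture is right in outline -- use competitors of the form $w_1-tu_\eps$, show their energy stays strictly below $\tilde\al_\mu^-+\tfrac{s}{N}S^{N/sp}$ (this is Proposition \ref{p:limit}, and you correctly identify the exponent bookkeeping there as the source of the thresholds $q_2,q_3$ and $N_0$), and run Ekeland to get a PS sequence. But two steps of your argument have genuine gaps, and they are precisely where the paper does something different.

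First, the variational framework. Ekeland must be applied to a concrete closed set whose infimum is the level you want and all of whose elements are sign-changing; ``a sign-changing neighbourhood of the projected path'' is not such a set, and applying Ekeland to $\mathcal{N}_{\mu,1}^-$ alone gives back the level $\ba_1=\tilde\al_\mu^-$, realised by $w_1$. The paper minimises over $\mathcal{N}_*^-:=\mathcal{N}_{\mu,1}^-\cap\mathcal{N}_{\mu,2}^-$ (every element has \emph{both} $u^+$ and $-u^-$ in $N_\mu^-$, hence is sign-changing), shows via Lemma \ref{l:6-i} that some $aw_1-bu_\eps$ lies in $\mathcal{N}_*^-$, and hence $c_2:=\inf_{\mathcal{N}_*^-}I_\mu<\tilde\al_\mu^-+\tfrac{s}{N}S^{N/ps}$ by Proposition \ref{p:limit}. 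Second, and more seriously, your compactness step rests on an unproved Struwe-type decomposition $u_n=w_1+v_n+o(1)$ with $v_n$ a PS sequence of $I_\mu$ at the reduced level. There is no reason the first profile of a PS sequence at level $c_2$ should be $w_1$, and for the nonlocal $p$-energy ($p\neq2$) the asymptotic additivity of $I_\mu$ and $I_\mu'$ along such a splitting is exactly the kind of machinery that is unavailable here; you would also need the sharper ceiling with the extra $-M\mu^{p^*_s/(p^*_s-q-1)}$, which Proposition \ref{p:limit} does not provide. The paper avoids all of this: it never claims strong convergence of $u_n$. It extracts weak limits $u_n^+\rightharpoonup\eta_1$, $u_n^-\rightharpoonup\eta_2$, and uses the hypotheses $\ba_1,\ba_2\geq\tilde\al_\mu^-$ together with $c_2<\tilde\al_\mu^-+\tfrac{s}{N}S^{N/ps}$ and the superadditivity $I_\mu(u)\geq I_\mu(u^+)+I_\mu(-u^-)$ to rule out $\eta_1\equiv0$ or $\eta_2\equiv0$ (if, say, $\eta_1\equiv0$ then $I_\mu(u_n^+)\geq\tfrac{s}{N}S^{N/ps}+o(1)$, contradicting $I_\mu(u_n^+)\leq c_2-\ba_2+o(1)<\tfrac{s}{N}S^{N/ps}$). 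It then shows $w_2=\eta_1-\eta_2$ satisfies $w_2^\pm=\eta_{1,2}$ and passes to the limit in the weak formulation directly. Your alternative claim that the negative part survives ``because the $L^{p^*_s}$-mass of $\eta_{\eps,t}^-$ is bounded below along the competitor path'' conflates the competitor path with the Ekeland sequence, which need not stay near that path. As written, the proposal does not close.
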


We need the following Proposition to prove the above Theorem \ref{t:4i}.
\begin{proposition}\lab{p:limit}
 Assume $0<\mu<\text{min}\{\mu_{*}, \tilde\mu, \tilde\mu_1\}$, where 
$\tilde\mu$ is as defined in $\eqref{mu'}$ and $\mu_{*}>0$ is chosen such that 
$\tilde\al_\mu^{-}$  is achieved in $(0, \mu_{*})$ and $\tilde{\mu_1}$ is as in Lemma \ref{inf-}. Then, for $p\geq\f{3+\sqrt{5}}{2}$, there exists $q_2:=
 \f{Np}{N-sp}-\f{p}{p-1}$ such that when $q>q_2$ and $N>sp(p^2-p+1)$ 
we have $$\sup_{a\geq 0,\ b\in\R} I_{\mu}(a w_1-bu_{\eps})< \tilde\al_{\mu}^{-}+\f{s}{N}S^\f{N}{ps},$$ for $\eps>0$ sufficiently small , 
where $w_1$  is a positive solution of $(\mathcal{P_\mu})$ and $u_{\eps}$ be as in \eqref{u-eps}. 

Furthermore, when $2\leq p<\f{3+\sqrt{5}}{2}$, there exists $q_3:= \f{N(p-1)}{N-sp}-\f{p-1}{p}$ such that
when $q>q_3$ and $N>sp(p+1)$,  it holds
$$\sup_{a\geq 0,\ b\in\R} I_{\mu}(a w_1-bu_{\eps})< \tilde\al_{\mu}^{-}+\f{s}{N}S^\f{N}{ps},$$ for $\eps>0$ sufficiently small .
\end{proposition}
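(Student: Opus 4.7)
The plan is to start by showing that the map $F(a,b) := I_\mu(aw_1 - bu_\varepsilon)$ is coercive on $[0,\infty)\times\mathbb{R}$: the critical term $-\frac{1}{p^*_s}|aw_1 - bu_\varepsilon|_{L^{p^*_s}}^{p^*_s}$ dominates and forces $F \to -\infty$ as $a^2+b^2 \to \infty$. Since $w_1$ is strictly positive in $\Omega$ while $u_\varepsilon$ concentrates on a small ball, the pair $\{w_1, u_\varepsilon\}$ stays quantitatively linearly independent, so the supremum is attained at some $(a_\varepsilon, b_\varepsilon)$ remaining in a bounded region of $[0,\infty)\times\mathbb{R}$ as $\varepsilon \to 0$.

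The second step is the decomposition
\[
I_\mu(aw_1 - bu_\varepsilon) = I_\mu(aw_1) + \mathcal J_\varepsilon(b) + \mathcal R_\varepsilon(a,b),
\]
where $\mathcal J_\varepsilon(b) := \frac{|b|^p}{p}\|u_\varepsilon\|_{X_0}^p - \frac{|b|^{p^*_s}}{p^*_s}|u_\varepsilon|_{L^{p^*_s}}^{p^*_s}$ and $\mathcal R_\varepsilon(a,b)$ gathers all crossed contributions from the three nonlinear terms of $I_\mu$. Since $w_1 \in N_\mu^-$ realises $\tilde\alpha_\mu^-$, the fiber map $a \mapsto I_\mu(aw_1)$ has global maximum equal to $\tilde\alpha_\mu^-$, hence $\sup_{a\geq 0} I_\mu(aw_1) \leq \tilde\alpha_\mu^-$. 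A one-variable maximisation combined with \eqref{i1} and \eqref{i2} yields
\[
\sup_{b\in\mathbb{R}} \mathcal J_\varepsilon(b) = \frac{s}{N}\Bigl(\tfrac{\|u_\varepsilon\|_{X_0}^p}{|u_\varepsilon|_{L^{p^*_s}}^p}\Bigr)^{N/(sp)} \leq \frac{s}{N}S^{N/(sp)} + o\bigl(\varepsilon^{(N-sp)/(p-1)}\bigr).
\]

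The crux is therefore to show $\mathcal R_\varepsilon(a_\varepsilon, b_\varepsilon) < 0$ for $\varepsilon$ small. The positive contributions to $\mathcal R_\varepsilon$ come from expanding $\|aw_1 - bu_\varepsilon\|_{X_0}^p$ via the pointwise inequality valid for $p\geq 2$
\[
|X - Y|^p = |X|^p - p|X|^{p-2}XY + O\bigl(|X|^{p-2}Y^2\bigr) + O\bigl(|Y|^p\bigr),
\]
applied to $X = aD_{w_1}(x,y)$, $Y = bD_{u_\varepsilon}(x,y)$, together with the analogous expansion of $|aw_1 - bu_\varepsilon|^{p^*_s}$ in $L^{p^*_s}(\Omega)$. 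Using that $w_1$ solves $(\mathcal P_\mu)$, the linear cross-terms pair into a multiple of $\mu\int w_1^q (bu_\varepsilon)\,dx$, of order $\mu\,\varepsilon^{(N-sp)/(p(p-1))}$ by Lemma \ref{l:4i}, while the residual quadratic and higher-order cross-terms are controlled by Lemma \ref{l:4i} as well. Against these stands the genuinely beneficial term coming from $-\frac{\mu}{q+1}|aw_1 - bu_\varepsilon|_{L^{q+1}}^{q+1}$, whose dominant part is $-c_0\mu\,|u_\varepsilon|_{L^{q+1}}^{q+1}$, bounded below through Lemma \ref{l:4ii}.

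The final balance selects the admissible ranges. For $p \geq \frac{3+\sqrt 5}{2}$ the worst bad term is the one of order $\varepsilon^{(N-sp)/(p(p-1))}$; requiring the gain $\mu\,\varepsilon^{\,N - (N-sp)(q+1)/p}$ from Case~3 of Lemma \ref{l:4ii} to strictly beat it yields $q > q_2 = \frac{Np}{N-sp} - \frac{p}{p-1}$, and the admissibility condition $q_2 < p-1$ is equivalent to $N > sp(p^2-p+1)$. For $2 \leq p < \frac{3+\sqrt 5}{2}$ the dominant bad term has a different scaling (the residue $|Y|^p$ in the expansion controls over the quadratic $|X|^{p-2}Y^2$), and an analogous comparison produces $q > q_3 = \frac{N(p-1)}{N-sp} - \frac{p-1}{p}$ with $N > sp(p+1)$. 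The main obstacle is the careful bookkeeping of the fractional $p$-norm expansion on $\mathbb R^{2N}$, where the concentration of $D_{u_\varepsilon}$ near the diagonal forces one to distinguish between the two dominant error terms; the threshold $p = \frac{3+\sqrt 5}{2}$ is precisely the root at which their $\varepsilon$-exponents cross, explaining the dichotomy in the statement.
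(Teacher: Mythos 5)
Your overall strategy coincides with the paper's: restrict to a bounded set of $(a,b)$, split $I_{\mu}(aw_1-bu_{\eps})$ into the two one-bubble energies plus cross terms, estimate the cross terms by Lemma \ref{l:4i}, extract the gain $-k_8|u_{\eps}|^{q+1}_{L^{q+1}(\Om)}$ via Lemma \ref{l:4ii}, and compare $\eps$-exponents. Two points in your plan, however, do not hold up.

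First, your quadratic remainder $O(|X|^{p-2}Y^{2})$, once integrated against $|x-y|^{-N-ps}$, is a mixed Gagliardo quantity $\int_{\R^{2N}}|w_1(x)-w_1(y)|^{p-2}|u_{\eps}(x)-u_{\eps}(y)|^{2}|x-y|^{-N-ps}\,dx\,dy$, and Lemma \ref{l:4i} says nothing about such objects: it only bounds integrals over $\Om$ of pointwise products of powers of $w_1$ and $u_{\eps}$. So the claim that the ``residual quadratic and higher-order cross-terms are controlled by Lemma \ref{l:4i}'' is unsupported, and this is precisely the delicate term. The paper sidesteps it by Taylor-expanding the functional $v\mapsto\|v\|_{X_0}^{p}$ and absorbing the second-order remainder into the full term $\|bu_{\eps}\|_{X_0}^{p}$ (using $p\geq2$ and the fact that $\|u_{\eps}\|_{X_0}$ is bounded away from zero), so that only the first-order cross term $a^{p-1}b\,A(w_1,u_{\eps})=a^{p-1}b\left(\mu A_2+A_1\right)$ survives, and that one Lemma \ref{l:4i} does control.

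Second, the mechanism you give for the dichotomy is misattributed. The $|Y|^{p}$ residue is $\|bu_{\eps}\|_{X_0}^{p}$, which is of order one; it is absorbed into $\sup_{b}\mathcal{J}_{\eps}(b)\leq\f{s}{N}S^{N/sp}+o(\eps^{(N-sp)/(p-1)})$ and produces no competing small term. The constraint $q>q_3$ in fact comes from the cross term of the \emph{concave} nonlinearity, $\mu\int_{\Om}w_1u_{\eps}^{q}\,dx\leq k_3\mu\,\eps^{(N-ps)q/(p(p-1))}$, i.e.\ item (iii) of Lemma \ref{l:4i}. One must require the gain $\eps^{N-(N-ps)(q+1)/p}$ to beat both $\eps^{(N-ps)/(p(p-1))}$ (which gives $q>q_2$) and $\eps^{(N-ps)q/(p(p-1))}$ (which gives $q>q_3$); which of $q_2,q_3$ is larger decides which constraint is binding, and that comparison turns on the sign of $p^{2}-3p+1$, whence the threshold $p=\f{3+\sqrt{5}}{2}$. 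As written, your final balance for $2\leq p<\f{3+\sqrt{5}}{2}$ is asserted rather than derived, since the term that actually forces $q>q_3$ never appears in your bookkeeping.
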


To prove the above proposition, we need the following lemmas.
\begin{lemma}\lab{l:4vi}
Let  $w_1$ and $\mu$ be as in Proposition \ref{p:limit}. Then $$\sup_{s>0}I_{\mu}(s w_1)= \tilde\al_{\mu}^{-}.$$
\end{lemma}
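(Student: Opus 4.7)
The plan is to reduce the statement to an application of Lemma \ref{N.mu-i-2} together with the defining property of $w_1$.

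First, since $w_1$ is a positive solution of $(\mathcal{P}_\mu)$ (as produced in Remark \ref{pos}), we have $w_1>0$ a.e.\ in $\Om$, so $(sw_1)^+=sw_1$ for every $s>0$. Consequently
\[
I_\mu(sw_1)=\frac{s^p}{p}\|w_1\|_{X_0}^p-\frac{\mu s^{q+1}}{q+1}|w_1|_{L^{q+1}(\Om)}^{q+1}-\frac{s^{p^*_s}}{p^*_s}|w_1|_{L^{p^*_s}(\Om)}^{p^*_s}=I_\mu^+(sw_1),
\]
so the supremum to be computed is the supremum of $s\mapsto I_\mu^+(sw_1)$ on $(0,\infty)$.

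Next I would apply Lemma \ref{N.mu-i-2} with $u=w_1$. This produces two unique positive fibering parameters $\tilde t^-(w_1)<\tilde t_0(w_1)<\tilde t^+(w_1)$ at which $s\mapsto I_\mu^+(sw_1)$ has, respectively, a local minimum on $[0,\tilde t_0]$ and a global maximum on $[\tilde t_0,\infty)$. Because $w_1$ is positive and already lies in $N_\mu^-$ (from Remark \ref{pos}), the uniqueness part of Lemma \ref{N.mu-i-2} forces $\tilde t^+(w_1)=1$, and hence
\[
\sup_{s\geq \tilde t_0}I_\mu^+(sw_1)=I_\mu^+(w_1).
\]
To upgrade this to the supremum over all $s>0$, I would note that on $[0,\tilde t_0]$ the map starts at $I_\mu^+(0)=0$, decreases to its minimum at $\tilde t^-$, and then increases to $I_\mu^+(\tilde t_0 w_1)$; since $\tilde t_0<1=\tilde t^+$ and the map is strictly increasing on $[\tilde t^-,\tilde t^+]$, we get $I_\mu^+(\tilde t_0 w_1)\le I_\mu^+(w_1)$. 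Combining with $I_\mu^+(0)=0\le I_\mu^+(w_1)$ (positivity of $I_\mu^+(w_1)$ follows from $w_1\in N_\mu^-$ and the standard Nehari lower bound $\|w_1\|_{X_0}\ge c>0$, plus the smallness of $\mu$) yields $\sup_{s\geq 0}I_\mu^+(sw_1)=I_\mu^+(w_1)$.

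Finally, since $w_1$ is precisely the point in $N_\mu^-$ that realizes $\tilde\al_\mu^-=\inf_{u\in N_\mu^-}I_\mu^+(u)$ (chosen in Remark \ref{pos} for $\mu\in(0,\mu_*)$), we have $I_\mu^+(w_1)=\tilde\al_\mu^-$, and the identity $\sup_{s>0}I_\mu(sw_1)=\tilde\al_\mu^-$ follows. The only mildly delicate point in this argument is checking that the maximum on $[\tilde t_0,\infty)$ (given directly by Lemma \ref{N.mu-i-2}) dominates the values on $[0,\tilde t_0]$; this is handled by the monotonicity structure of the fibering map recalled above.
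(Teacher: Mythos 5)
Your proof is correct and follows essentially the same route as the paper's: identify $1$ as the "large" fibering parameter $t^+(w_1)$ (the paper does this via $g'(1)=0$ for $g(s)=I_\mu(sw_1)$ and then excludes $t^-(w_1)=1$ by disjointness of $N_\mu^+$ and $N_\mu^-$, which is exactly your uniqueness argument), and conclude $\sup_{s>0}I_\mu(sw_1)=I_\mu(w_1)=\tilde\al_\mu^-$. Your extra care in checking that the maximum over $[\tilde t_0,\infty)$ dominates the values on $(0,\tilde t_0]$ (requiring $I_\mu(w_1)=\tilde\al_\mu^-\ge 0$) addresses a point the paper silently elides, so no changes are needed.
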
 
\begin{proof}
By the definition of   $\tilde\al_{\mu}^{-}$, we have  $\tilde\al_{\mu}^{-}=\inf_{u \in N_{\mu}^-}I^+_{\mu}(u)= I^+_{\mu}(w_1)=I_{\mu}(w_1)$. In the last equality we have used the fact that $w_1>0$.  Define $g(s):=I_{\mu}(sw_1)$. From the proof of Lemma \ref{N.mu-i}, it follows that there exists only two critical points of $g$, namely $t^{+}(w_1)$ and $t^{-}(w_1)$ and $\max_{s>0}g(s)=g(t^{+}(w_1))$. On the other hand $\<{I'}_{\mu}(w_1), v \>=0$ for every $v\in X_0$. Therefore $g'(1)=0$ which implies either $t^{+}(w_1)=1$ or $t^{-}(w_1)=1$. \\
{\it Claim}: $t^{-}(w_1)\not=1$.\\
To see this, we note that $t^{-}(w_1)=1$ implies $t^{-}(w_1)w_1\in N_{\mu}^{-}$ as $w_1\in N_{\mu}^{-}$. Using Lemma \ref{N.mu-i}, we know $t^{-}(w_1)w_1\in N_{\mu}^{+}$. Thus $N_{\mu}^{+}\cap N_{\mu}^{-}\not=\emptyset$, which is a contradiction. Hence we have the claim.\\
Therefore $t^{+}(w_1)=1$ and this completes the proof.
\end{proof}
\begin{lemma}\lab{l:4vii}
Let $u_{\eps}$ be as in \eqref{u-eps} and  $\mu$ be as in Proposition \ref{p:limit}. 
Then for $\eps>0$ sufficiently small, we have 
 $$\sup_{t\in\R}I_{\mu}(t u_{\eps})=\f{s}{N}S^\f{N}{ps}+
C\eps^{\f{(N-ps)}{(p-1)}} - k_8|u_{\eps}|^{q+1}_{L^{q+1}(\Om)}.$$
\end{lemma}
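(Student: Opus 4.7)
The plan is to decouple the critical part of $I_\mu(tu_\eps)$ from the concave term, maximize the former explicitly via a calculus exercise, and then control the extra concave term by a uniform lower bound on the maximizer.

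Since $|tu_\eps|^r=|t|^r u_\eps^r$ for each of the exponents $r\in\{p,q+1,p^*_s\}$ appearing in $I_\mu$, we have $I_\mu(tu_\eps)=I_\mu(|t|u_\eps)$, so the supremum is attained at some $t_M\geq 0$. Set $A:=\norm{u_\eps}_{X_0}^p$, $B:=|u_\eps|_{L^{p^*_s}(\Om)}^{p^*_s}$, $D:=|u_\eps|_{L^{q+1}(\Om)}^{q+1}$, and split $\phi(t):=I_\mu(tu_\eps)=\tilde\phi(t)-\f{\mu t^{q+1}}{q+1}D$ with $\tilde\phi(t):=\f{t^p}{p}A-\f{t^{p^*_s}}{p^*_s}B$. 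A direct computation shows that $\tilde\phi$ attains its global maximum at $t_\ast=(A/B)^{1/(p^*_s-p)}$ with $\max_{t\geq 0}\tilde\phi(t)=\f{s}{N}A^{N/(ps)}B^{-(N-ps)/(ps)}$ (using the identity $\f{1}{p}-\f{1}{p^*_s}=\f{s}{N}$). Plugging in $A\leq S^{N/(sp)}+o(\eps^{(N-ps)/(p-1)})$ from \eqref{i1} together with the lower bound $B\geq S^{N/(sp)}-C\eps^{N/(p-1)}$ derived as in \eqref{Mar-5-1}, and Taylor-expanding to first order (absorbing $\eps^{N/(p-1)}\leq \eps^{(N-ps)/(p-1)}$), we obtain $\max_{t\geq 0}\tilde\phi(t)\leq \f{s}{N}S^{N/(ps)}+C_1\eps^{(N-ps)/(p-1)}$.

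The next step is a uniform positive lower bound on $t_M$. Observe $\phi(t_M)\geq \phi(1)=\f{A}{p}-\f{\mu D}{q+1}-\f{B}{p^*_s}$. Since $0\leq\psi\leq 1$ immediately gives $B\leq \int_{\Rn}U_\eps^{p^*_s}=S^{N/(sp)}$, and the Sobolev inequality yields $A\geq S\,B^{p/p^*_s}\geq S^{N/(sp)}-o(1)$, while $\mu D\to 0$ as $\eps,\mu\to 0$ by Lemma \ref{l:4ii}, we conclude $\phi(1)\geq \f{s}{2N}S^{N/(ps)}$ for $\eps,\mu$ sufficiently small. The crude bound $\phi(t)\leq \f{t^p}{p}A$ valid for $t\geq 0$, together with $A\leq 2S^{N/(sp)}$, then forces $t_M\geq \tau_0:=(sp/(4N))^{1/p}>0$ uniformly in small $\eps,\mu$.

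Combining everything, $\phi(t_M)\leq \tilde\phi(t_M)-\f{\mu t_M^{q+1}}{q+1}D\leq \max_{t\geq 0}\tilde\phi(t)-\f{\mu \tau_0^{q+1}}{q+1}D$, whence $\sup_{t\in\R}I_\mu(tu_\eps)\leq \f{s}{N}S^{N/(ps)}+C\eps^{(N-ps)/(p-1)}-k_8|u_\eps|_{L^{q+1}(\Om)}^{q+1}$ with $k_8:=\f{\mu\tau_0^{q+1}}{q+1}$. The main technical obstacle is precisely securing the uniform lower bound $t_M\geq \tau_0$: from $\phi'(t_M)=0$ one immediately extracts only $t_M\leq t_\ast$, and without a matching lower bound the concave correction $-k_8 D$ would be lost in the final upper estimate. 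Evaluating at $t=1$ and exploiting the near-extremality of $u_\eps$ for the Sobolev constant $S$ (so that both $A$ and $B$ are within $o(1)$ of $S^{N/(sp)}$) resolves this cleanly.
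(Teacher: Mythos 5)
Your proof is correct and follows the same basic decomposition as the paper: write $I_\mu(tu_\eps)=\tilde\phi(t)-\f{\mu t^{q+1}}{q+1}|u_\eps|^{q+1}_{L^{q+1}(\Om)}$ with $\tilde\phi(t)=\f{t^p}{p}\norm{u_\eps}_{X_0}^p-\f{t^{p^*_s}}{p^*_s}|u_\eps|^{p^*_s}_{L^{p^*_s}(\Om)}$, bound $\sup\tilde\phi$ by $\f{s}{N}S^{N/ps}+C\eps^{(N-ps)/(p-1)}$ via \eqref{i1}--\eqref{i2}, and retain the concave term through a lower bound on the maximizing $t$. Where you genuinely diverge is in that last step. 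The paper invokes the fibering-map analysis of Lemma \ref{N.mu-i}: the supremum of $I_\mu(tu_\eps)$ is attained at $t^+_\eps>(t_0)_\eps$, and the explicit formula for $(t_0)_\eps$ together with the Sobolev inequality gives $\mu(t_0)_\eps^{q+1}/(q+1)\geq k_8$ with $k_8$ independent of $\eps$. You instead get the lower bound $t_M\geq\tau_0$ by comparing $\phi(t_M)\geq\phi(1)$ with the crude estimate $\phi(t)\leq\f{t^p}{p}\norm{u_\eps}_{X_0}^p$, which works precisely because $u_\eps$ is near-extremal for $S$ (both $\norm{u_\eps}_{X_0}^p$ and $|u_\eps|^{p^*_s}_{L^{p^*_s}}$ are within $o(1)$ of $S^{N/sp}$, and $\mu|u_\eps|^{q+1}_{L^{q+1}}$ is small). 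Your route is more elementary and bypasses the Nehari-type structure, at the cost of using the extremality of $u_\eps$; the paper's route yields a lower bound on the maximizer valid for an arbitrary nonzero function. Both produce a constant $k_8>0$ depending on $\mu$ but not on $\eps$, which is all that is used later in Proposition \ref{p:limit}. Two cosmetic remarks: the upper bound on $|u_\eps|^{q+1}_{L^{q+1}(\Om)}$ that you need for $\phi(1)$ does not come from Lemma \ref{l:4ii} (which gives lower bounds); it follows, e.g., from H\"older and $|u_\eps|^{p^*_s}_{L^{p^*_s}(\Om)}\leq S^{N/sp}$. And, like the paper, you really prove ``$\leq$'' rather than the stated equality, which is all that the subsequent argument requires.
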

\begin{proof}
Define $\tilde\phi(t)=\f{t^p}{p}||u_{\eps}||_{X_0}^p-\f{t^{p^*_s}}{p^*_s}|u_{\eps}|_{L^{p^*_s}(\Om)}^{p^*_s}$. 
Thus $I_{\mu}(t u_{\eps})=\tilde\phi(t)-\mu\f{t^{q+1}}{q+1}|u_{\eps}|^{q+1}_{L^{q+1}(\Om)}$.
On the other hand, applying the analysis done in Lemma \ref{N.mu-i} to $u_{\eps}$, 
we obtain there exists $(t_0)_{\eps}= 
\bigg(\frac{(p-1-q)||{u_{\eps}}||_{X_0}^p}{(p^*_s-1-q)|u_{\eps}|^{p^*_s}_{L^{p^*_s}(\Om)}}\bigg)^{\frac{N-ps}{p^2s}}<t^{+}_{\eps}$ such that 
\Bea
\sup_{t\in\R}I_{\mu}(t u_{\eps})= \sup_{t\geq 0}I_{\mu}(t u_{\eps})=I_{\mu}(t^{+}_{\eps} u_{\eps})&=& \tilde\phi(t^{+}_{\eps})-\mu\f{{(t^{+}_{\eps})}^{q+1}}{q+1}|u_{\eps}|^{q+1}_{L^{q+1}(\Om)}\\
&\leq& \sup_{t\geq 0}\tilde\phi(t)-\mu\f{(t_0)_{\eps}^{q+1}}{q+1}|u_{\eps}|^{q+1}_{L^{q+1}(\Om)}.
\Eea
Substituting the value of $(t_0)_{\eps}$ and using Sobolev inequality, 
we have $$\mu\f{(t_0)_{\eps}^{q+1}}{q+1}\geq \f{\mu}{q+1}\bigg(\f{p-1-q}{p^*_s-q-1}S\bigg)^\f{(N-ps)(q+1)}{p^2s}=k_8.$$
Consequently, 
\be\lab{4-8}
\sup_{t\in\R}I_{\mu}(t u_{\eps})\leq \sup_{t\geq 0}\tilde\phi(t)-k_8|u_{\eps}|^{q+1}_{L^{q+1}(\Om)}.
\ee
Using elementary analysis, it is easy to check that $\tilde\phi$ attains it's maximum at the point
$\tilde t_0= \bigg(\f{||u_{\eps}||_{X_0}^p}{|u_{\eps}|^{p^*_s}_{L^{p^*_s}(\Om)}}\bigg)^\f{1}{p^*_s-p}$ 
and $\tilde\phi(t_0)=\f{s}{N}\bigg(\f{||u_{\eps}||_{X_0}^p}{|u_{\eps}|^{p}_{L^{p^*_s}(\Om)}}\bigg)^\f{N}{ps}$.\\
Moreover, using \eqref{i1} and \eqref{i2}, we can deduce as in \eqref{i3} that \be\lab{feb-18-4}\tilde\phi(t_0) \leq \f{s}{N}S^\f{N}{ps}+C\eps^{\f{(N-ps)}{(p-1)}}.\ee

Substituting back \eqref{feb-18-4} into \eqref{4-8}, completes the proof.
\end{proof}

\vspace{2mm}

{\bf Proof of Proposition \ref{p:limit}}: Note that, for fixed $a$ and $b$, $I_{\mu}\big(\eta(a w_1-bu_{\eps, \de})\big)\to -\infty$ as $|\eta|\to\infty$. Therefore $\sup_{a\geq 0,\ b\in\R} I_{\mu}(a w_1-bu_{\eps, \de})$ exists and supremum will be attained in $a^2+b^2\leq R^2$, for some large $R>0$. Thus it is enough to estimate  $I_{\mu}(a w_1-b u_{\eps, \de})$ in  $\{(a,b)\in \R^{+}\times\R: a^2+b^2\leq R^2 \}$. Using elementary inequality, there exists $d(m)>0$ such that 
\be\lab{in:ele}
|a+b|^m\geq |a|^m+|b|^m-d(|a|^{m-1}|b|+|a||b|^{m-1}) \quad\forall\quad a,\  b\in\R, \ m>1.
\ee 
Define, $f(v):=||v||_{X_0}^p$. Then using Taylor's theorem
\Bea f(a w_1-bu_{\eps, \de})&=&f(aw_1)-\langle f'(aw_1), bu_{\eps} \rangle+o(||bu_{\eps, \de}||_{X_0}^2)\no\\
&\leq&||a w_1||_{X_0}^p \no\\
&-& p
\int_{\R^{2N}}\frac{|aw_1(x)-aw_1(y)|^{p-2}(aw_1(x)-aw_1(y))(bu_{\eps, \de}(x)-bu_{\eps, \de}(y))}{|x-y|^{N+ps}}dxdy\\
&+&c||bu_{\eps, \de}||_{X_0}^2,\\
\Eea
where $c>0$ is small enough. We also note that from the definition of $u_{\eps,\de}$, it follows that $||u_{\eps, \de}||_{X_0}$ is bounded away from $0$. Therefore, since $p\geq 2$ we have $c||bu_{\eps, \de}||_{X_0}^2\leq ||bu_{\eps, \de}||_{X_0}^p$, for $c>0$ small enough. Hence
\Bea 
||a w_1-bu_{\eps, \de}||^p_{X_0}&=& ||a w_1||^p_{X_0} \no\\
&-& p
\int_{\R^{2N}}\frac{|aw_1(x)-aw_1(y)|^{p-2}(aw_1(x)-aw_1(y))(bu_{\eps, \de}(x)-bu_{\eps, \de}(y))}{|x-y|^{N+ps}}dxdy\\
&+&||b u_{\eps, \de}||^p_{X_0}
\Eea
Consequently,  $a^2+b^2\leq R^2$ implies
\Bea
I_{\mu}(a w_1-bu_{\eps, \de})&\leq& \f{1}{p}||a w_1||^p_{X_0}\\
&-& \int_{\R^{2N}}\frac{|aw_1(x)-aw_1(y)|^{p-2}(aw_1(x)-aw_1(y))(bu_{\eps, \de}(x)-bu_{\eps, \de}(y))}{|x-y|^{N+ps}}dxdy \\
&+&\f{1}{p}||b u_{\eps, \de}||^p_{X_0}-\f{1}{p_s^*}
\Iom|a w_1|^{p_s^*}dx-\f{1}{p_s^*}\Iom|b u_{\eps, \de}|^{p_s^*}dx\\
&-&\f{\mu}{q+1}\Iom|a w_1|^{q+1}dx-\f{\mu}{q+1}\Iom|b u_{\eps, \de}|^{q+1}dx\\
&+&C\displaystyle\left(\Iom|aw_1|^{p_s^*-1}|b u_{\eps, \de}|dx+\Iom|a w_1||b u_{\eps, \de}|^{p_s^{*}-1}dx\right)\\
&+&C\displaystyle\left(\Iom|aw_1|^{q}|b u_{\eps, \de}|dx+\Iom|a w_1||b u_{\eps, \de}|^{q}dx\right)\\
&=& I_{\mu}(a w_1)+I_{\mu}(b u_{\eps, \de})-a^{q}b\mu\Iom |w_1|^{q-1}w_1u_{\eps, \de}dx\\
&-&a^{p_s^*}b\Iom |w_1|^{p_s^*-2}w_1u_{\eps, \de}dx\\
&+&C\displaystyle\left(\Iom|w_1|^{p_s^*-1}| u_{\eps, \de}|dx+\Iom| w_1|| u_{\eps, \de}|^{p_s^{*}-1}dx\right)\\
&+&C\displaystyle\left(\Iom|w_1|^{q}| u_{\eps, \de}|dx+\Iom| w_1|| u_{\eps, \de}|^{q}dx\right).
\Eea

 Using Lemmas \ref{l:4i}, \ref{l:4vi} and \ref{l:4vii} we estimate in $a^2+b^2\leq R^2$,
 \be
 I_{\mu}(a w_1-bu_{\eps, \de})\leq \tilde\al_{\mu}^{-}
 +\f{s}{N}S_s^\f{N}{ps} - k_8|u_{\eps}|^{q+1}_{L^{q+1}(\Om)}
 + C\left(\eps^{\f{(N-ps)}{(p-1)}}+\eps^\f{N-ps}{p(p-1)}+\eps^\f{(N-ps)q}{p(p-1)}+\eps^\f{N(p-1)+ps}{p(p-1)}\right)\no.
 \ee
For the term $k_8|u_{\eps}|^{q+1}_{L^{q+1}(\Om)}$, we invoke  Lemma \ref{l:4ii}. Therefore when $\f{N(p-2)+ps}{N-ps}<q<p-1$, we have
\bea\lab{feb-26-4}
I_{\mu}(a w_1-bu_{\eps, \de})\leq   \tilde\al_{\mu}^{-}
&+&\f{s}{N}S_s^\f{N}{ps} - k_9\eps^{N-\f{(N-ps)(q+1)}{p}}\no\\
&+& C\left(\eps^{\f{(N-ps)}{(p-1)}}+\eps^\f{N-ps}{p(p-1)}+\eps^\f{(N-ps)q}{p(p-1)}+\eps^\f{N(p-1)+ps}{p(p-1)}\right)
\eea
 We will choose $q$ in such a way that the term $k_9\eps^{N-\f{(N-ps)(q+1)}{p}}$ dominates the other term 
involving $\eps$. Note that among the terms in the bracket, $\eps^\f{N-ps}{p(p-1)}$ and $\eps^\f{(N-ps)q}{p(p-1)}$ dominate the others.
  
This in turn implies we have to choose $q$ such that 
\be\lab{feb-26-1}
{N-\f{(N-ps)(q+1)}{p}}< \f{N-ps}{p(p-1)}
\ee
and
\be\lab{feb-26-2}
{N-\f{(N-ps)(q+1)}{p}}< \f{(N-ps)q}{p(p-1)}.
\ee
\eqref{feb-26-1} and \eqref{feb-26-2} implies $q>q_2$ and $q>q_3$ respectively, where 
\be\lab{feb-26-3}
q_2:= \f{Np}{N-sp}-\f{p}{p-1} \quad\text{and}\quad q_3:= \f{N(p-1)}{N-sp}-\f{p-1}{p}.
\ee
{\bf Case 1:} $p\geq \f{3+\sqrt{5}}{2}$\\
In this case by straight forward calculation it follows that $q_2>q_3$. So in this case, we choose $q>q_2$. Moreover, since $q<p-1$, to make the interval $(q_2, p-1)\not=\emptyset$, we have to take $N>sp(p^2-p+1)$.

{\bf Case 2:} $2\leq p< \f{3+\sqrt{5}}{2}$\\
In this case again by simple calculation it follows that $q_3>q_2$. Thus, in this case, we choose $q>q_3$. Furthermore, as  $q<p-1$, to make the interval $(q_3, p-1)\not=\emptyset$, we have to take $N>sp(p+1)$.

\vspace{3mm}

 Hence in both the cases  taking $\eps>0$ to be small enough in \eqref{feb-26-4}, we obtain  
$$\sup_{a\geq 0, b\in\R} I_{\mu}(a w_1-bu_{\eps, \de})<   \tilde\al_{\mu}^{-}+\f{s}{N}S_s^\f{N}{ps}.$$
\hfill{$\square$}

\vspace{4mm}

{\bf Proof of Theorem \ref{t:4i}}: Define $\mu_0:=\min\{\tilde\mu, \mu_{*} \}$,
\be\lab{eq:N-star}
\mathcal{N}^{-}_*:=\mathcal{N}^{-}_{\mu,1}\cap \mathcal{N}^{-}_{\mu,2}.
\ee and 
\be\lab{c-2} c_2:= \inf_{u\in \mathcal{N}^{-}_* } I_{\mu}(u),\ee 

Let $\mu\in (0, \mu_0)$. Using Ekland's variational principle and similar to the proof of Theorem \ref{t:4ii}, we obtain a  sequence $\{u_n\}\in \mathcal{N}^{-}_{*}$ satisfying 
$$I_{\mu}(u_n)\to c_2, \quad I'_{\mu}(u_n)\to 0 \quad\text{in}\quad (X_0)'.$$ 
Thus $\{u_n\}$ is a (PS) sequence at level $c_2$. From Lemma \ref{l:6-i}, given below, it follows that there exists $a>0$ and $b\in R$ 
such that $a w_1-bu_{\eps}\in \mathcal{N}^{-}_{*}$. Therefore Proposition \ref{p:limit} yields 
\be\lab{c2c1}
c_2< \tilde\al_{\mu}^{-}+\f{s}{N}S^\f{N}{ps}.
\ee

{\bf Claim 1:} There exists two positive constants $c, C$ such that 
$ 0< c \leq ||{u_n^{\pm}}||_{X_0} \leq C$. \\
To see this, we note that $\{u_n\} \subset \mathcal{N}_*^{-}\subset \mathcal{N}_{\mu,1}^{-}$.  Thus using \eqref{4-13}, Step 2 and Step 3
of the proof of Theorem \ref{t:4ii},  we have $||{u_n^{\pm}}||_{X_0} \leq C$ and $||{u_n^{-}}||_{X_0}\geq c$. To show  $||{u_n^{+}}||_{X_0}\geq a$
for some $a>0$, we use method of contradiction. Assume up to a subsequence  $||{u_n^{+}}||_{X_0} \to 0$ as $n \to \infty$. 
This  together with Sobolev embedding implies $|u_n^+|_{L^{p^*_s}(\Om)} \to 0$. On the other hand,
$u_n^{+} \in N_{\mu}^{-}$ implies $(p-1-q) ||{u_n^{+}}||_{X_0}^p-(p^*_s-q-1)|u_n^+|^{p^*_s}_{L^{p^*_s}(\Om)}<0$. 
Therefore by Sobolev inequality, we have 
\begin{align} 
S\leq \frac{||{u_n^{+}}||_{X_0}^p}{|u_n^+|^{p}_{L^{p^*_s}(\Om)}}<\f{p^*_s-q-1}{p-1-q}|u_n^+|^{p^*_s-p}_{L^{p^*_s}(\Om)},\no
\end{align}
which is a contradiction to the fact that $|u_n^+|_{L^{p^*_s}(\Om)} \to 0$. Hence the claim follows.

\vspace{2mm}

Going to a subsequence if necessary we have
\be\lab{4-22}
u_n^{+} \deb \eta_1,\,\, u_n^{-} \deb \eta_2 \quad\mbox{in} \quad X_0. 
\ee
{\bf Claim 2}: $\eta_1 \nequiv 0,\,\,\eta_2 \nequiv 0. $\\
 Suppose not, that is $\eta_1 \equiv 0. $ Then by compact embedding, 
$u_n^{+} \to 0$ in $L^{q+1}(\Om)$. 
Moreover, $u_n^{+} \in N_{\mu}^{-}\subset N_{\mu}$, implies $\<I'_\mu(u_n^{+}),u_n^{+}\>=0$.  Consequently, 
$$||{u_n^{+}}||_{X_0}^p-|u_n^{+}|^{p^*_s}_{L^{p^*_s}(\Om)}=\mu|u_n^{+}|^{q+1}_{L^{q+1}(\Om)}=o(1).$$
So we have $|u_n^{+}|^{p^*_s}_{L^{p^*_s}(\Om)}=||{u_n^{+}}||_{X_0}^p+o(1). $ This together with $||{u_n^{+}}||_{X_0} \geq c$  implies 
$$\frac{|u_n^{+}|^{p^*_s}_{L^{p^*_s}(\Om)}}{||{u_n^{+}}||_{X_0}^p} \geq 1+ o(1). $$
This along with Sobolev embedding gives $|u_n^{+}|^{p^*_s}_{L^{p^*_s}(\Om)} \geq S^{N/ps}+o(1)$.
Thus we have, 
\be\lab{4-20}
 I_\mu(u_n^{+})= \frac{1}{p}||{u_n^{+}}||_{X_0}^p-\frac{1}{p^*_s}|u_n^{+}|^{p^*_s}_{L^{p^*_s}(\Om)}+o(1)
 \geq \frac{s}{N}S^{N/ps}+o(1).
 \ee
Moreover,  $u_n \in \mathcal{N}_*^{-}$ implies $-u_n^{-} \in N_{\mu}^{-}$. Therefore using the given condition on $\ba_2$, we get 
\be\lab{4-21}
I_\mu(-u_n^{-}) \geq \ba_2 \geq \tilde\al_{\mu}^{-}.
\ee
 Also it follows  \ $I_\mu(u_n^{+})+I_\mu(-u_n^{-}) \leq I_\mu(u_n)=c_2+o(1)$ (see \eqref{4-14'}). 
 Combining this along with \eqref{4-21} and \eqref{c2c1}, we obtain
$$I_\mu(u_n^{+}) \leq c_2-  \tilde\al_{\mu}^{-}+o(1)<\frac{s}{N}S_s^{N/ps},$$
which is a contradiction to \eqref{4-20}. Therefore $\eta_1 \neq 0.$ Similarly $\eta_2 \neq 0$ and this proves the claim.

\vspace{2mm}

Set $w_2 :=\eta_1-\eta_2.$

{\bf Claim 3:} $w_2^{+}=\eta_1$ and $w_2^{-}=\eta_2$ a.e..\\ 
To see the claim we observe that $\eta_1\eta_2=0$ a.e. in $\Om$. Indeed,
\bea\lab{4-22'}
\displaystyle |\Iom \eta_1\eta_2 dx| &=&|\Iom (u_n^+-\eta_1)u_n^- dx+\Iom \eta_1(u_n^--\eta_2)dx|\no\\ 
&\leq& |u_n^+-\eta_1|_{L^p(\Om)}|u_n^{-}|_{L^{p'}(\Om)}+|\eta_1|_{L^{p'}(\Om)}|u_n^{-} -\eta_2|_{L^p(\Om)}
\eea
where $\frac{1}{p}+\frac{1}{p'}=1.$
By compact embedding we have $u_n^{+}\to \eta_1$ and $u_n^{-}\to \eta_2$ in $L^p(\Om)$.
As $p \geq \f{2N}{N+s},$ then $p'\leq p^*_s.$ 
Therefore, using claim 1,
we pass the limit in \eqref{4-22'} and obtain $\Iom \eta_1\eta_2 dx=0$. Moreover  by \eqref{4-22}, $\eta_1, \ \eta_2\geq 0$ a.e.. Hence $\eta_1\eta_2=0$ a.e. in $\Om$. We have $w_2^+-w_2^-=w_2=\eta_1-\eta_2$. It is easy to check that $w_2^+\leq \eta_1$ and $w_2^{-}\leq \eta_2$.  To show that equality holds a.e. we apply method of contradiction. Suppose, there exists $E\subset\Om$ such that $|E|>0$ and $0\leq w_2^+(x)<\eta_1(x) \ \forall\ x\in E$.  Therefore $\eta_2=0$ a.e. in $E$ by the observation that we made. Hence $w_2^{+}(x)-w_2^{-}(x)=\eta_1(x)$  a.e. in $E$.
Clearly $w_2^{-}(x)\not>0$ a.e., otherwise $w_2^{+}(x)=0$ a.e. and that would imply $\eta_1(x)=-w_2^{-}(x)<0$ a.e, which is not possible since $\eta_1>0$ in $E$. Thus $w_2^{-}(x)=0$. Hence $\eta_1(x)=w_2^{+}(x)$ a.e. in $E$, which is a contradiction. Hence the claim follows.

\vspace{2mm}

Therefore $w_2$ is sign changing in $\Om$ and $u_n \deb w_2$ in $X_0$. 
Moreover, $I'_{\mu}(u_n)\to 0$ in $(X_0)'$ implies
\bea
\int_{\R^{2N}}\f{|u_n(x)-u_n(y)|^{p-2}(u_n(x)-u_n(y))(\phi(x)-\phi(y))}{|x-y|^{N+ps}}dxdy
&&-\mu\Iom|u_n|^{q-1}u_n\phi dx\no\\
&&-\Iom |u_n|^{p^*_s-2}u_n\phi dx=o(1)\no\\
\eea
for every $\phi\in X_0$.
Passing  the limit using Vitali's convergence theorem via H\"older's inequality we obtain $\<I'_\mu(w_2),\phi\>=0$.  Hence  $w_2$ is a sign changing weak solution to $(\mathcal{P}_{\mu})$.
\hfil{$\square$}

\begin{lemma}\label{l:6-i}
Let $u_{\eps,\de}$ be as defined in \eqref{u-eps} and $w_1$ be a positive solution of $(\mathcal{P}_{\mu})$ for which $\tilde\al_{\mu}^{-}$ is achieved, when $\mu\in (0,\mu_*)$. Then there exists $a, \ b \in \R, \ a \geq 0$ such that $aw_1-bu_\var \in \mathcal{N}^{-}_{*}$, where  $\mathcal{N}^{-}_{*}$ is defined as in \eqref{eq:N-star}.
\end{lemma}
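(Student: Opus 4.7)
My plan is to find $a>0$ and $b>0$ so that $v_{a,b}:=aw_1-b u_\eps$ has both of its positive and negative parts on the $N_\mu^-$ branch, with $v_{a,b}$ itself on the Nehari manifold. The proof would proceed by a two-parameter topological search. First, for any $a,b>0$ and $\eps$ sufficiently small, $v_{a,b}$ is honestly sign-changing: $u_\eps$ concentrates at the origin with $\|u_\eps\|_{L^\infty(\Om)}\to\infty$ as $\eps\to 0$, while $w_1\in L^\infty(\Om)$, so $\{v_{a,b}<0\}$ is a shrinking ball around the origin and $\{v_{a,b}>0\}$ covers the rest of $\Om$. In particular $v_{a,b}^{\pm}\not\equiv 0$.

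Next, I would consider the vector field $F:(0,\infty)^2\to\R^2$ defined by
\[
F(a,b):=\bigl(\varphi'_{v_{a,b}^{+}}(1),\,\varphi'_{-v_{a,b}^{-}}(1)\bigr),
\]
with $\varphi'_\phi(1)=\norm{\phi}_{X_0}^p-\mu|\phi|_{L^{q+1}(\Om)}^{q+1}-|\phi|_{L^{p^*_s}(\Om)}^{p^*_s}$ as in \eqref{phi'}. A zero of $F$ yields $v_{a,b}^{+},\,-v_{a,b}^{-}\in N_\mu$; since $\mu<\tilde\mu$ forces $N_\mu^{0}=\emptyset$ by Lemma \ref{N.mu-ii}, both parts must lie in $N_\mu^{+}\cup N_\mu^{-}$, and a monotonicity argument in the spirit of Lemma \ref{N.mu-i} (using that $w_1\in N_\mu^{-}$ and that $\varphi'_{\phi}$ vanishes precisely at $t^{-}(\phi)<t^{+}(\phi)$, with $t^{+}\phi\in N_\mu^{-}$) places both on the $N_\mu^{-}$ branch for our construction. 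To produce a zero of $F$, I would apply Miranda's theorem on a rectangle $[a_0,a_1]\times[b_0,b_1]\subset(0,\infty)^2$ after verifying sign conditions on opposite faces: for $a=a_0$ small the first component of $F$ is positive because the $p$-norm term dominates the subcritical and critical pieces (as $p<p^*_s$); for $a=a_1$ large the critical term dominates, making it negative; a symmetric analysis handles the second component as a function of $b$.

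Finally, one must ensure $v_{a^*,b^*}\in N_\mu$ itself. In the local $p$-Laplacian case this would follow automatically from $\norm{v}^p=\norm{v^+}^p+\norm{v^-}^p$, but in the fractional setting there is a genuine gap coming from the nonlocal cross term $\int_{\R^{2N}} v^{+}(x)v^{-}(y)/|x-y|^{N+ps}\,dxdy$ (cf.\ the computation in Remark \ref{pos} and equation \eqref{4-13}). Using the explicit decay \eqref{Jan-23-1} of $U_\eps$ together with the asymptotic disjointness of the supports of $v^{+}$ and $v^{-}$, I would show that this cross term is $O(\eps^\al)$ for some $\al>0$, and absorb the resulting error via a small perturbation of $(a^*,b^*)$ so that the full Nehari identity $\langle I'_\mu(v),v\rangle=0$ holds on the nose.

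The main obstacle is this last step: the nonlocal structure of $\norm{\cdot}_{X_0}^p$ forbids a clean decomposition into positive and negative parts, and the control of the cross terms relies on delicate asymptotic estimates driven by the concentration profile of $u_\eps$ from Lemma \ref{decay}. Everything else is an essentially standard Miranda/Brouwer-type existence argument built on the fiber-map analysis already developed in Lemmas \ref{N.mu-i}--\ref{N.mu-ii}.
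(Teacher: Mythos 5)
Your two--parameter family is the same as the paper's (note $aw_1-bu_\eps=a\bigl(w_1-\tfrac{b}{a}u_\eps\bigr)$, so your $(a,b)$ is the paper's scaling $a$ together with the splitting parameter $r=b/a$), but the existence mechanism you propose does not work as stated, for two reasons. First, the Miranda sign conditions fail. You claim that for $a=a_0$ small the first component $\varphi'_{v_{a,b}^+}(1)=\norm{v_{a,b}^+}_{X_0}^p-\mu|v_{a,b}^+|_{L^{q+1}}^{q+1}-|v_{a,b}^+|_{L^{p^*_s}}^{p^*_s}$ is positive ``because the $p$-norm term dominates ... (as $p<p^*_s$)''; this ignores the concave term. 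Since $q+1<p$, when $v_{a,b}^+\to0$ (which is what happens as $a\to0^+$ with $b$ fixed) the term $-\mu|v_{a,b}^+|_{L^{q+1}}^{q+1}$ dominates the $p$-th power of the norm, so the first component is \emph{negative} for small $a$ --- exactly the statement $\frac{\pa}{\pa t}I_\mu(tu)<0$ on $[0,t^-)$ established in Lemma \ref{N.mu-i}. It is also negative for large $a$ (critical term dominates), so this component does not change sign between the two $a$-faces of your rectangle and Miranda's theorem does not apply. Second, even at a zero of $F$ you only get $v^+,-v^-\in N_\mu$; since the fiber map has two roots $t^-(u)<t^+(u)$ with $t^-u\in N_\mu^+$ and $t^+u\in N_\mu^-$, a zero of $F$ could place either part in $N_\mu^+$, and the ``monotonicity argument'' that is supposed to select the $N_\mu^-$ branch is not supplied and is not automatic.

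The paper's proof removes both difficulties simultaneously by never solving $\varphi'(1)=0$ directly: for each $r\in(\bar r_1,\bar r_2)$, with $\bar r_1=\inf_\Om w_1/u_\eps$ and $\bar r_2=\sup_\Om w_1/u_\eps$ guaranteeing $(w_1-ru_\eps)^\pm\nequiv0$, Lemma \ref{N.mu-i} (valid since $\mu<\tilde\mu$) produces the \emph{unique large root} $s^+(r):=t^+\bigl((w_1-ru_\eps)^+\bigr)$ and $s^-(r):=t^+\bigl(-(w_1-ru_\eps)^-\bigr)$, each of which lands on $N_\mu^-$ by construction. One then shows $s^\pm$ are continuous in $r$, that $s^+(r)\to+\infty$ as $r\to\bar r_2^-$ while it stays finite at $\bar r_1^+$, and the reverse for $s^-$; a one--dimensional intermediate value argument gives $b$ with $s^+(b)=s^-(b)=:a$. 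I will add that your final concern is a fair one: because $\norm{v}_{X_0}^p\geq\norm{v^+}_{X_0}^p+\norm{v^-}_{X_0}^p$ with a strictly positive nonlocal cross term (cf.\ \eqref{4-13}), having both parts in $N_\mu^-$ does not by itself give $\langle I'_\mu(v),v\rangle=0$; the paper's proof concludes membership in $\mathcal{N}_*^-$ without addressing this, and your proposed $O(\eps^\al)$ perturbation is only a sketch (perturbing two parameters to satisfy three constraints). But the decisive gaps in your argument are the false sign conditions and the unproved branch selection.
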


This lemma can be proved in the spirit of \cite[Lemma 4.8]{BM}, for the convenience of the reader we again sketch the proof in the appendix.

\vspace{4mm}

{\bf Proof of Theorem \ref{thm.2}}: Define $\mu^*=\min\{\mu_*,\tilde\mu, \tilde\mu_1, \mu_0, \mu_1\}$, where $\mu_*$ is chosen such that $\tilde\al_{\mu}^-$ is achieved in $(0, \mu_*)$. $\tilde\mu$, $\tilde \mu_1$, $\mu_0$ and $\mu_1$ are as in \eqref{mu'}, Lemma \ref{inf-}, Theorem \ref{t:4i} and  Lemma \ref{l:6-ii} respectively.   
Furthermore, define $q_0$ and $N_0$ as follows:
\begin{align*}
q_{0}:= 
 \begin{cases}
  \text{max}\{q_1,q_2\} \quad\mbox{when}\quad p\geq \f{3+\sqrt{5}}{2},\\
  \text{max}\{q_1,q_3\}\quad\mbox{when} \quad 2\leq p<\f{3+\sqrt{5}}{2}.
 \end{cases}
\end{align*}
\begin{align*}
N_{0}:= 
 \begin{cases}
  sp(p^2-p+1) \quad\mbox{when}\quad p\geq \f{3+\sqrt{5}}{2},\\
  sp(p+1)\quad\mbox{when} \quad 2\leq p<\f{3+\sqrt{5}}{2}.
 \end{cases}
\end{align*}
Note that $N_0> \f{sp}{2} [p+1+\sqrt {(p+1)^2-4 }] $, where the RHS appeared in Theorem \ref{t:4ii}. Hence 
combining Theorem \ref{t:4ii} and Theorem \ref{t:4i}, we complete the proof of this theorem for $\mu\in(0,\mu^*)$, $q>q_0$ and $N>N_0$.
\hfil{$\square$}

\section{\bf Appendix} 

\begin{lemma}\label{l:6-ii}
Let $g_n$ be as in \eqref{g_n} in the Theorem \ref{t:4ii} and $v\in X_0$ such that $||v||_{X_0}=1$. Then there exists $\mu_1>0$ such that
if $\mu\in (0,\mu_1)$ implies $\<g_n'(0),v^+\>$ is uniformly bounded in $X_0$. 
\end{lemma}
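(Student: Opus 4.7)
The plan is to apply the explicit formula from Lemma~\ref{N.mu-iii} with $u=u_n^+$ and $\phi=v^+$, and show that the numerator is uniformly bounded above while the denominator is uniformly bounded below in absolute value, provided $\mu$ is chosen sufficiently small.

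First I would insert $u=u_n^+$ and $\phi=v^+$ in the formula of Lemma~\ref{N.mu-iii} to write
$$
\<g_n'(0),v^+\>=\frac{p\,A(u_n^+,v^+)-p^*_s\Iom|u_n^+|^{p^*_s-2}u_n^+v^+\,dx-(q+1)\mu\Iom|u_n^+|^{q-1}u_n^+v^+\,dx}{(p-1-q)\|u_n^+\|_{X_0}^p-(p^*_s-q-1)|u_n^+|^{p^*_s}_{L^{p^*_s}(\Om)}}.
$$
For the numerator, H\"older's inequality in $\R^{2N}$ applied to $A(u_n^+,v^+)$ gives $|A(u_n^+,v^+)|\leq\|u_n^+\|_{X_0}^{p-1}\|v^+\|_{X_0}$. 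Together with $\|v^+\|_{X_0}\leq \|v\|_{X_0}=1$, the Sobolev embedding $X_0\hookrightarrow L^r(\Om)$ for $r\in[1,p^*_s]$, and the uniform upper bound on $\|u_n^+\|_{X_0}$ (which follows from the fact that $\{u_n\}$ is bounded in $X_0$, see Step 2 of Theorem~\ref{t:4ii}), the three terms in the numerator are uniformly bounded, say by a constant $K_1=K_1(\mu,p,q,N,s,|\Om|)$.

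For the denominator, I would use $u_n^+\in N_\mu$ to obtain $\|u_n^+\|_{X_0}^p=\mu|u_n^+|^{q+1}_{L^{q+1}(\Om)}+|u_n^+|^{p^*_s}_{L^{p^*_s}(\Om)}$, so that
$$
(p-1-q)\|u_n^+\|_{X_0}^p-(p^*_s-q-1)|u_n^+|^{p^*_s}_{L^{p^*_s}(\Om)}
=(p-1-q)\mu|u_n^+|^{q+1}_{L^{q+1}(\Om)}-(p^*_s-p)|u_n^+|^{p^*_s}_{L^{p^*_s}(\Om)}.
$$
Since $u_n^+\in N_\mu^-$, this quantity is strictly negative, so its absolute value equals $(p^*_s-p)|u_n^+|^{p^*_s}_{L^{p^*_s}(\Om)}-(p-1-q)\mu|u_n^+|^{q+1}_{L^{q+1}(\Om)}$. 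Using again $u_n^+\in N_\mu^-$ and Sobolev, one has $|u_n^+|^{p^*_s}_{L^{p^*_s}(\Om)}\geq\tfrac{p-1-q}{p^*_s-q-1}\|u_n^+\|_{X_0}^p$, while the already established fact that $N_\mu^-$ stays uniformly away from zero (argued just before \eqref{4-22} in the proof of Theorem~\ref{t:4ii} via $\|u_n^+\|_{X_0}^{p^*_s-p}\geq \tfrac{p-1-q}{p^*_s-q-1}S^{p^*_s/p}$) gives a uniform lower bound $\|u_n^+\|_{X_0}\geq \tilde b>0$. Hence
$$
(p^*_s-p)|u_n^+|^{p^*_s}_{L^{p^*_s}(\Om)}\geq (p^*_s-p)\tfrac{p-1-q}{p^*_s-q-1}\tilde b^{\,p}=:C_0>0.
$$
On the other hand, H\"older and Sobolev give $|u_n^+|^{q+1}_{L^{q+1}(\Om)}\leq |\Om|^{1-(q+1)/p^*_s}S^{-(q+1)/p}\|u_n^+\|_{X_0}^{q+1}\leq C_1$ uniformly in $n$. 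Consequently the absolute value of the denominator is at least $C_0-(p-1-q)C_1\mu$, which is bounded below by $C_0/2$ once $\mu<\mu_1:=\min\{\tilde\mu,\, C_0/(2(p-1-q)C_1)\}$.

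Combining the bounded numerator with the denominator bounded away from zero yields $|\<g_n'(0),v^+\>|\leq 2K_1/C_0$ uniformly in $n$ and in $v$ with $\|v\|_{X_0}=1$. The only subtlety is the simultaneous calibration of $\mu_1$ (ensuring both $N^0_\mu=\emptyset$ and that the concave term does not swamp the critical term in the denominator); beyond that, the argument is a mechanical chain of H\"older/Sobolev estimates together with the a priori bounds already produced in Theorem~\ref{t:4ii}.
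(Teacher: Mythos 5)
Your proposal is correct, and it reaches the key estimate by a genuinely more direct route than the paper. Both arguments bound the numerator identically (H\"older on $A(u_n^+,v^+)$ plus Sobolev, using $\|u_n\|_{X_0}\le C$ from Step 2), so the whole content is the lower bound on $|(p-1-q)\|u_n^+\|_{X_0}^p-(p^*_s-q-1)|u_n^+|^{p^*_s}_{L^{p^*_s}(\Om)}|$. The paper proves this by contradiction: assuming the denominator is $o(1)$, it derives the balance relations (3.2)--(3.3), deduces from them an upper bound $\|u_n\|_{X_0}\lesssim\mu^{1/(p-1-q)}$, and then introduces the auxiliary functional $\psi_\mu$ to exhibit a quantity that is simultaneously $o(1)$ and bounded below by $d_0>0$ when $\mu<\mu_1$. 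You instead use the Nehari identity $\|u_n^+\|_{X_0}^p=\mu|u_n^+|^{q+1}_{L^{q+1}}+|u_n^+|^{p^*_s}_{L^{p^*_s}}$ to rewrite the denominator as $(p-1-q)\mu|u_n^+|^{q+1}_{L^{q+1}}-(p^*_s-p)|u_n^+|^{p^*_s}_{L^{p^*_s}}$ (this is exactly the second line of \eqref{Mar-23-1}, where incidentally the paper's coefficient $(1-q)$ should read $(p-1-q)$), note it is negative on $N_\mu^-$, and bound the dominant term below by the uniform distance of $N_\mu^-$ from the origin ($\|u\|_{X_0}^{p^*_s-p}\ge\frac{p-1-q}{p^*_s-q-1}S^{p^*_s/p}$ for $u\in N_\mu^-$, by Sobolev) while the concave term is $O(\mu)$ uniformly. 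This gives an explicit, quantitative lower bound $C_0-C\mu\ge C_0/2$ for all $n$ at once, rather than a subsequential contradiction; both arguments are, at bottom, quantitative versions of Lemma \ref{N.mu-ii} ($N_\mu^0=\emptyset$). Two small points to keep clean: the constant $C_1$ bounding $|u_n^+|^{q+1}_{L^{q+1}}$ depends on the Step 2 bound for $\|u_n\|_{X_0}$, which itself depends on $\mu$ through $\beta_1$ and the coefficient of the concave term, so one should first restrict to $\mu<\tilde\mu$ (as you do) to make $C_1$ uniform before calibrating $\mu_1$; and the formula from Lemma \ref{N.mu-iii} must indeed be applied to $u_n^+$ (as in Step 4 of Theorem \ref{t:4ii}), which you do correctly even though the paper's appendix writes $u_n$.
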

\begin{proof}
 In view of lemma $\ref{N.mu-iii}$ we have,
$$\<g'_n(0),v^+\>=\displaystyle\frac{pA(u_n,v^+)-p^*_s\displaystyle\Iom |u_n|^{p^*_s-p}u_nv^+-(q+1)\mu\displaystyle\Iom |u_n|^{q-1}u_nv^+}
{(p-1-q)||{u_n}||_{X_0}^p-(p^*_s-q-1)|u_n|^{p^*_s}_{L^{p^*_s}(\Om)}}.$$
Using Claim 2 in theorem \ref{t:4ii}, there exists $C>0 $ such that $||{u_n}||_{X_0} \leq C$ for all $n \geq 1. $
Therefore applying H\"older inequality followed by  Sobolev inequality, we have\\
$|\<g'_n(0),v^+\>|\leq \frac{C ||{v}||_{X_0}}{\left|(p-1-q)||{u_n}||_{X_0}^p-(p^*_s-q-1)|u_n|^{p^*_s}_{L^{p^*_s}(\Om)}\right|}. $
Hence it is enough to show   
$$\left|(p-1-q)||{u_n}||_{X_0}^p-(p^*_s-q-1)|u_n|^{p^*_s}_{L^{p^*_s}(\Om)}\right|>C,$$ for some $C>0$ and $n$ large.
Suppose it does not hold. Then up to a subsequence 
$$(p-1-q)||{u_n}||_{X_0}^p-(p^*_s-q-1)|u_n|^{p^*_s}_{L^{p^*_s}(\Om)}=o(1)\quad\mbox{as}\quad n\to \infty. $$
Hence,
\begin{align} \label{*1}
||{u_n}||_{X_0}^p=\frac{p^*-q-1}{p-1-q}|u_n|^{p^*_s}_{L^{p^*_s}(\Om)}+o(1)\quad\mbox{as}\quad n\to \infty.
\end{align}
Combining  the above expression along with the fact that $u_n \in N_{\mu}$, we obtain
\begin{align} \label{*4}
\mu|u_n|^{q+1}_{L^{q+1}(\Om)}=\frac{p^*_s-p}{p-1-q}\abs{u_n}^{p^*_s}_{L^{p^*_s}(\Om)} +o(1)=\frac{p^*_s-p}{p^*_s-1-q}
||{u_n}||_{X_0}^p+o(1).
\end{align}
After applying H\"{o}lder inequality and followed by Sobolev inequality,  expression  \eqref{*4} yields 
\begin{align}\label{*6}
||{u_n}||_{X_0} \leq \bigg(\mu\frac{p^*_s-q-1}{p^*_s-p}|\Om|^{\frac{p^*_s-q-1}{p^*_s}}
S^{-\frac{q+1}{p}} \bigg)^{\frac{1}{p-1-q}} +o(1).
\end{align}
Combining \eqref{4-13} and Claim 3 in the proof of Theorem \ref{t:4ii}, we have $||{u_n}||_{X_0}\geq b$, 
for some $b>0$. Therefore from \eqref{*1} we get
\begin{align}\label{*9}
|u_n|^{p^*_s}_{L^{p^*_s}(\Om)} \geq C\quad\mbox{for some constant}\  C>0,\ \mbox{and}\,\, n\,\,\mbox{large enough.}
\end{align}
Define $\psi_\mu: N_{\mu} \to \R$ as follows:
$$\psi_\mu(u)=k_0\bigg(\frac{||{u}||_{X_0}^{p(p^*_s-1)}}{|u|^{p^*_s(p-1)}_{L^{p^*_s}(\Om)}}\bigg)
^{\frac{1}{p^*_s-p}}-\mu|u|^{q+1}_{L^{q+1}(\Om)},$$
where $k_0=\left(\frac{p-1-q}{p^*_s-q-1}\right)^{\frac{p^*_s-1}{p^*_s-p}}\left(\frac{p^*_s-p}{p-1-q}\right)$. 
Simplifying  $\psi_{\mu}(u_n)$ using \eqref{*4}, we obtain
\be \label{*7}
\psi_\mu(u_n)=k_0\bigg[\bigg(\f{p^*_s-q-1}{p-1-q}\bigg)^{p^*_s-1}\f{|u_n|^{(p^*_s-1)p^*_s}_{L^{p^*_s}(\Om)}}
{|u_n|_{L^{p^*_s}(\Om)}^{p^*_s(p-1)}}\bigg]^\f{1}{p^*_s-p} - \f{p^*_s-p}{p-1-q} |u_n|_{L^{p^*_s}(\Om)}^{p^*_s}+o(1)=o(1).
\ee
On the other hand, using H\"older inequality in the definition of $\psi_{\mu}(u_n)$, we obtain
\bea\lab{10*}
 \psi_\mu(u_n)&=&k_0\bigg(\frac{||{u_n}||_{X_0}^{p(p^*_s-1)}}{|u_n|^{p^*_s(p-1)}_{L^{p^*_s}(\Om)}}\bigg)^{\frac{1}{p^*_s-p}}-
 \mu|u_n|^{q+1}_{L^{q+1}(\Om)}\notag \\               
&\geq&k_0\bigg(\frac{||{u_n}||_{X_0}^{p(p^*_s-1)}}{|u_n|^{p^*_s(p-1)}_{L^{p^*_s}(\Om)}}\bigg)^{\frac{1}{p^*_s-p}}-
\mu|\Om|^{\f{p^*_s-q-1}{p^*_s}}|u_n|^{q+1}_{L^{p^*_s}(\Om)}\notag\\
&=&|u_n|^{q+1}_{L^{p^*_s}(\Om)}\bigg\{k_0\bigg(\frac{||{u_n}||_{X_0}^{p(p^*_s-1)}}{|u_n|^{p^*_s(p-1)}_{L^{p^*_s}(\Om)}}\bigg)
^{\frac{1}{p^*_s-p}}\frac{1}{|u_n|^{q+1}_{L^{p^*_s}(\Om)}}-\mu|\Om|^{\f{p^*_s-q-1}{p^*_s}}\bigg\}.
\eea
Using Sobolev embedding and \eqref{*6}, we simplify the term 
$\bigg(\frac{||{u_n}||_{X_0}^{p(p^*_s-1)}}{|u_n|^{p^*_s(p-1)}_{L^{p^*_s}(\Om)}}\bigg)
^{\frac{1}{p^*_s-p}}\frac{1}{|u_n|^{q+1}_{L^{p^*_s}(\Om)}}$ and obtain
\bea\lab{11*}
\bigg(\frac{||{u_n}||_{X_0}^{p(p^*_s-1)}}{|u_n|^{p^*_s(p-1)}_{L^{p^*_s}(\Om)}}\bigg)
^{\frac{1}{p^*_s-p}}\frac{1}{|u_n|^{q+1}_{L^{p^*_s}(\Om)}} &\geq& S^\f{p^*_s-1}{p^*_s-p}|u_n|_{L^{p^*_s}(\Om)}^{-q}\no\\
&\geq& S^{\f{p^*_s-1}{p^*_s-p}+\f{q}{p}}||u_n||_{X_0}^{-q}\no\\
&\geq& S^{\f{p^*_s-1}{p^*_s-p}+\f{q}{p}} \bigg(\mu\frac{p^*_s-q-1}{p^*_s-p}|\Om|^{\frac{p^*-q-1}{p^*_s}}
S^{-\frac{q+1}{p}} \bigg)^{-\frac{q}{p-1-q}}.
\eea
Substituting back \eqref{11*} into \eqref{10*} and using \eqref{*9}, we obtain
$$
 \psi_\mu(u_n)\geq C^{q+1}\bigg [k_0 S^{\f{p^*_s-1}{p^*_s-p}+\f{q}{p-1-q}}
 \mu^{-\f{q}{p-1-q}}\left(\frac{p^*_s-q-1}{p^*_s-p}|\Om|^{\frac{p^*_s-q-1}{p^*_s}}\right)^{-\f{q}{p-1-q}}
 -\mu|\Om|^\f{p^*_s-q-1}{p^*_s}\bigg]\geq d_0,
$$
for some $d_0>0$, $n$ large and $\mu < \mu_1$, where
$\mu_1=\mu_1(k_0, s, q, N, |\Om|)$. This is a contradiction to \eqref{*7}. Hence the lemma follows. 
\end{proof}

{\bf Proof of Lemma \ref{l:6-i}} \begin{proof}
 We will show that there exists $a>0, \ b \in \R$ such that 
 $$a(w_1-bu_\var)^{+} \in N_{\mu}^{-} \quad\mbox{and}\quad -a(w_1-bu_\var)^{-} \in N_{\mu}^{-}. $$
 Let us denote $\bar r_1=\inf_{x \in \Om}\frac{w_1(x)}{u_\var(x)}, \,\, \bar r_2=\sup_{x \in \Om}\frac{w_1(x)}{u_\var(x)}$.\\
 As both $w_1$ and $u_\var$ are positive in $\Om$, we have $\bar r_1 \geq 0$ and  $\bar r_2$ can be $+\infty$.
Let $r \in (\bar r_1,\bar r_2)$. Then  $w_1,u_\var \in X_0$ implies $(w_1-ru_\var) \in X_0$ and $(w_1-ru_\var)^{+} \nequiv 0$.
Otherwise, $(w_1-ru_\var)^{+} \equiv 0$ would imply $\bar r_2 \leq r$, which is not possible. Define $v_r := w_1-ru_\var$. 
Hence $0\not\equiv v_r^{+}\in X_0$ (since for any $u\in X_0$, we have $|u|\in X_0$. Similarly $0\not\equiv v_r^{-}\in X_0$. Therefore by lemma $\ref{N.mu-i}$ there exists $0<s^{+}(r)<s^{-}(r)$ such that $s^{+}(r)v^{+}_r \in N_{\mu}^{-}$, 
 and $-s^{-}(r)(v_r^{-}) \in N_{\mu}^{-} $.
Let us consider the functions $s^{\pm}: \R \to (0,\infty)$  defined as above.\\
\textit{Claim}: The functions $r\mapsto s^{\pm}(r)$ are continuous and $$\lim_{r \to \bar r_1^+}s^{+}(r)=t^{+}(v^{+}_{\bar r_1}) \quad\text{and}\quad
\lim_{r \to \bar r_2^-}s^{+}(r)=+\infty,$$ where the function $t^{+}$ is same as defined  in lemma \ref{N.mu-i}.\\
To see the claim, choose $r_0 \in (\bar r_1,\bar r_2)$ and $\{r_n\}_{n \geq 1} \subset (\bar r_1,\bar r_2)$  such that $r_n \to r_0$ as $n \to \infty$. We need to show 
that $s^{+}(r_n)\to s^{+}(r_0)$ as $n \to \infty$.  Corresponding to $r_n$ and $r_0$, we have  $v_{r_n}^{+}=(w_1-r_nu_\var)^{+}$ and $v_{r_0}^{+}=(w_1-r_0u_\var)^{+}$.
By lemma $\ref{N.mu-i}$. we note that $s^{+}(r)=t^{+}(v^{+}_r)$.
Let us define the function 
\Bea
F(s, r)& :=&s^{p-1-q}||{(w_1-ru_{\eps})^+}||_{X_0}^p-s^{p^*_s-q-1}|(w_1-ru_{\eps})^+|^{p^*_s}_{L^{p^*_s}(\Om)}
-\mu|(w_1-ru_{\eps})^+|^{q+1}_{L^{q+1}(\Om)}\\
&=& \phi(s, r)-\mu|(w_1-ru_{\eps})^+|^{q+1}_{L^{q+1}(\Om)},
\Eea
where $$\phi(s,r) := s^{p-1-q}||{(w_1-ru_{\eps})^+}||_{X_0}^p-s^{p^*_s-q-1}|(w_1-ru_{\eps})^+|^{p^*_s}_{L^{p^*_s}(\Om)}.$$ 
Doing the similar calculation as in lemma $\ref{N.mu-i}$, we obtain that for any fixed $r$, the function $F(s, r)$ has only two zeros $s=t^{+}(v_r^+)$ and $s=t^{-}(v_r^+)$. Consequently $s^{+}(r)$ is the largest $0$ of $F(s, r)$ for  any fixed $r$. As $r_n \to r_0$ we have $v^{+}_{r_n} \to v^{+}_{r_0}$ in $X_0$ .
Indeed, by straight forward computation it follows $v_{r_n}\to v_{r_0}$ in $X_0$. Therefore $|v_{r_n}|\to |v_{r_0}|$ in $X_0$. This in turn implies $v_{r_n}^+\to v_{r_0}^+$ in $X_0$.
Hence $||v_{r_n}^+||_{X_0}\to ||v_{r_0}^+||_{X_0}$. Moreover by Sobolev inequality, we have $|v_{r_n}^+|_{L^{p^*_s}(\Om)}\to 
|v_{r_0}^+|_{L^{p^*_s}(\Om)}$ and  $|v_{r_n}^+|_{L^{q+1}(\Om)}\to |v_{r_0}^+|_{L^{q+1}(\Om)} $. As a result, we have
$F(s, r_n)\to F(s, r_0)$ uniformly. Therefore an elementary analysis yields $s^+(r_n)\to s^+(r_0)$.

Moreover, $\bar r_2\geq \f{w_1}{u_\eps}$ implies $w_1-\bar r_2u_{\eps}\leq 0$.
As a consequence $r\to \bar r_2^-$ implies $(w_1-r u_{\eps})^+\to 0$ pointwise.
Moreover, since $|(w_1-r u_{\eps})^+|_{L^{\infty}(\Om)}\leq |w_1|_{L^{\infty}(\Om)}$,
using dominated convergence theorem we have $|(w_1-r u_{\eps})^+|_{L^{p^*_s}(\Om)}\to 0$.
From the analysis in Lemma \ref{N.mu-i}, for any $r$, we also have $s^+(r)>t_0(v_r^+)$, where function $t_0$ is defined as in lemma \ref{N.mu-i}, which is the maximum point of $\phi(.,r)$.
Therefore it is enough to show that $\lim_{r\to \bar r_2^-}t_0(v_r^+)=\infty$. Applying Sobolev inequality in the definition of $t_0(v_r^+)$ we get
$$t_0(v_r^+)=\bigg(\f{(p-1-q)||v_r^+||_{X_0}^p}{(p^*_s-1-q)|v_r^+|_{L^{p^*_s}(\Om)}^{p^*_s}}\bigg)^\f{1}{p^*_s-p}
\geq\bigg(\f{S(p-1-q)}{p^*_s-1-q}\bigg)^\f{1}{p^*_s-p}|v_r^+|_{L^{p^*_s}(\Om)}^{-1}.$$ 
Hence $\lim_{r\to \bar r_2^-}t_0(v_r^+)=\infty$.  

Proceeding similarly we can show that if $r\to \bar r_1^-$ then $v_r^+\to v_{\bar r_1}$ and 
$\lim_{r \to \bar r_1^+}s^{+}(r)=t^{+}(v^{+}_{\bar r_1})$ and
$$\lim_{r \to r_1^{+}}s^{-}(r)=+\infty,\,\, \lim_{r\to r_2^{-}}s^{-}(r)=t^{+}(v_r^-)<+\infty. $$
The continuity of $s^{\pm}$ implies that there exists $b \in (\bar r_1,\bar r_2)$ such that
$s^{+}(r)=s^{-}(r)=a>0$.
Therefore, $$a(w_1-bu_\var)^{+} \in N_{\mu}^{-} \quad\mbox{and}\quad -a(w_1-bu_\var)^{-} \in N_{\mu}^{-},$$
that is, the function $a(w_1-bu_\var) \in \mathcal{N}_*^{-}$ and this completes the proof.

\end{proof}

\vspace{5mm}

{\bf Acknowledgement:}  The first author is supported by the INSPIRE research grant DST/INSPIRE 04/2013/000152 and the second author is supported by the
NBHM grant 2/39(12)/2014/RD-II.

\end{document}